\definecolor{will}{RGB}{8,166,241}
\definecolor{will2}{RGB}{241,83,8}
\definecolor{willcorr}{HTML}{552afa}
\definecolor{willcorr2}{HTML}{006400}
\newcommand{\R}{\mathbb{R}}
\newcommand{\T}{\mathbb{T}}
\newcommand{\Z}{\mathbb{Z}}
\newcommand{\K}{\mathcal{K}}
\DeclareMathOperator{\Log}{Log}
\DeclareMathOperator{\val}{val}
\setlist[enumerate,1]{label={(\alph*)}}
\setlist[enumerate,2]{label={(\roman*)}}
\newcommand{\cK}{\mathcal{K}}
\newcommand{\cH}{\mathcal{H}}
\newcommand{\cO}{\mathcal{O}}
\newtheorem{thm}{Theorem}[section]
\newtheorem{pr}[thm]{Proposition}
\newtheorem{lem}[thm]{Lemma}
\newtheorem{cor}[thm]{Corollary}
\theoremstyle{definition}
\newtheorem{df}[thm]{Definition}
\theoremstyle{remark}
\newtheorem{rem}[thm]{Remark}
\newtheorem{ex}[thm]{Example}
\title[Symplectic cohomology of magnetic cotangent bundles]{The symplectic cohomology of magnetic cotangent bundles}
\author{Yoel Groman and Will J. Merry}
\date{\today}
\address{ Yoel Groman\\
Hebrew University of Jerusalem\\
Mathematics Department}
\email{ygroman@gmail.com}
\address{Will J.~Merry\\
 Department of Mathematics\\
 ETH Z\"urich}
\email{merry@math.ethz.ch}
\begin{document}
\maketitle{}

\begin{abstract}
We construct a family version of symplectic Floer cohomology for magnetic cotangent bundles, without any restrictions on the magnetic form, using the dissipative method for compactness introduced in \cite{Groman2015}. As an application, we deduce that if $N$ is a closed orientable manifold and $ \sigma$ is a magnetic form that is not weakly exact, then the $ \pi_1$-sensitive Hofer-Zehnder capacity of any compact set in the magnetic cotangent bundle determined by $ \sigma$ is finite.
\end{abstract}

\section{Introduction}
The study of geodesic flows on Riemannian manifolds is one of the oldest and richest fields in conservative dynamics. A natural generalisation is the notion of a \textit{magnetic geodesic flow}, which models the motion of a charged particle in a magnetic field. The symplectic framework for studying this problem involves modifying the canonical symplectic structure carried by a cotangent bundle, rather than modifying the Hamiltonian encoding the dynamics. Namely, let $N$ be a closed orientable manifold with cotangent bundle $ \pi \colon T^*N \to N$ and let $ \lambda $ denote the Liouville 1-form on $T^*N$. Then $\omega = -d \lambda$ is a symplectic form on $T^*N$. Now take a closed 2-form $ \sigma$ on $N$ (representing the magnetic field) and build from $ \sigma$ a new symplectic form
$$
\omega_{ \sigma} := \omega + \pi^* \sigma.
$$
We call $ \omega_{ \sigma}$ a \textit{magnetic symplectic form}, and we refer to the symplectic manifold $(T^*N, \omega_{ \sigma})$ as a magnetic cotangent bundle (the name ``twisted cotangent bundle'' is also often used).

The aim of this paper is to study how the symplectic topology of $(T^*M, \omega_{\sigma})$ is influenced by the choice of $\sigma$, via the lens of symplectic cohomology. Our main result is:
\begin{thm}
\label{thm:main}
Let $N$ be a closed orientable manifold and let $ \sigma \in \Omega^2(N)$ denote a closed 2-form. The symplectic cohomology $SH^*(T^*N \! \colon \! \omega_{ \sigma})$ is well-defined. Moreover there is an isomorphism of rings
\begin{equation}
  \label{eq-thm-main-iso}
  SH^*(T^*N \! \colon \! \omega_{ \sigma}) \cong H_{n-*}(\mathcal{L}N )_{ \tau(\sigma) \otimes \tau(w_2(N))}.
\end{equation}
\end{thm}
Symplectic cohomology is defined as a direct limit of Floer cohomologies of Hamiltonians with linear growth at infinity, and $H_*(\mathcal{L}N )_{ \tau(\sigma)}$ denotes the homology of the free loop space $ \mathcal{L}N$ with twisted coefficients\footnote{Throughout this paper we work with coefficients in an underlying field $\mathbb{K}$. If $\mathbb{K}$ has characteristic $2$ then the term involving the trangression of the second Stiefel-Whitney class of $N$ in \eqref{eq-thm-main-iso} can be dropped.}. See \S\ref{SecCoefficients} for details.

We say that a magnetic form $ \sigma$ is \textit{weakly exact} if the lift of $  \sigma$ to the universal cover of $N$ is exact. In this paper we will focus on the case when $ \sigma$ is \textit{not} weakly exact. This is due to the following result:
\begin{thm}[Albers, Frauenfelder and Oancea \cite{AlbersFrauenfelderOancea2017}]
\label{thm:afo}
Let $ \sigma \in \Omega^2(N)$. If $ \sigma$ is not weakly exact then the twisted loop space homology vanishes:
$$
H_{*}( \mathcal{L}N)_{ \tau( \sigma)} = 0.
$$
\end{thm}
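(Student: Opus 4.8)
The plan is to realise the $\tau(\sigma)$-twisted coefficients as an $R$-local system $\mathcal{R}$ on $\mathcal{L}N$ (here $R$ denotes the coefficient field), to show that $\mathcal{R}$ is \emph{fibrewise acyclic} over the evaluation fibration $\mathrm{ev}_0\colon\mathcal{L}N\to N$, $\gamma\mapsto\gamma(0)$, and then to feed this into the Leray--Serre spectral sequence with local coefficients.

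First I would translate the hypothesis into a statement about $\pi_2$. Writing $p\colon\widetilde N\to N$ for the universal cover, the lift $p^*\sigma$ is exact iff its de~Rham class vanishes; since $\widetilde N$ is simply connected, Hurewicz gives $H_2(\widetilde N;\Z)\cong\pi_2(\widetilde N)\cong\pi_2(N)$, so by de~Rham's theorem and universal coefficients $p^*\sigma$ is exact iff $\langle[\sigma],\alpha\rangle=0$ for all $\alpha\in\pi_2(N)$. Hence ``$\sigma$ is not weakly exact'' means precisely that $\langle[\sigma],\alpha\rangle\neq 0$ for some $\alpha\in\pi_2(N)$. Next I recall that $\tau(\sigma)\in H^1(\mathcal{L}N;\R)$ is the fibre integral of $\mathrm{ev}^*\sigma$ along $S^1\times\mathcal{L}N\to N$, $(t,\gamma)\mapsto\gamma(t)$, so that $\langle\tau(\sigma),[\Gamma]\rangle=\int_{S^1\times S^1}\Phi_\Gamma^*\sigma$ for a loop $\Gamma\colon S^1\to\mathcal{L}N$, where $\Phi_\Gamma(s,t)=\Gamma(s)(t)$. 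When $\Gamma$ is a loop in the fibre $\Omega_qN=\mathrm{ev}_0^{-1}(q)$ based at the constant loop $\bar q$, the map $\Phi_\Gamma$ is constant on the two coordinate circles of the torus and therefore descends to a $2$-sphere, whose class $\alpha_\Gamma\in\pi_2(N,q)$ corresponds to $[\Gamma]$ under the canonical isomorphism $\pi_1(\Omega_qN,\bar q)\cong\pi_2(N,q)$, and $\langle\tau(\sigma),[\Gamma]\rangle=\langle[\sigma],\alpha_\Gamma\rangle$. Since concatenation with a fixed loop $\gamma_g$ representing $g\in\pi_1(N,q)$ is a homotopy equivalence $\Omega_{q,1}N\xrightarrow{\simeq}\Omega_{q,g}N$ of each component of the fibre with the path-connected H-space $\Omega_{q,1}N$, and inserting $\gamma_g$ does not change the relevant $\sigma$-integral, it follows that under these equivalences $\mathcal{R}$ restricts on every component $\Omega_{q,g}N$ to the character $\chi$ of $\pi_1(\Omega_{q,1}N)=\pi_2(N)$ with $\chi(\alpha)=t^{\langle[\sigma],\alpha\rangle}$. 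By construction of the coefficients --- and here it is essential that the twisting is valued in a field, say a Novikov field, rather than in a group ring --- one has $\chi(\alpha)\neq 1$ whenever $\langle[\sigma],\alpha\rangle\neq 0$, so by the first step $\chi$ is nontrivial.

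The crux is the following lemma: if $G$ is a path-connected H-space and $\chi\colon\pi_1(G)\to R^\times$ is a nontrivial character into a field $R$, then $H_*(G;R_\chi)=0$. I would prove it by choosing $c\colon(S^1,*)\to(G,e)$ with $\chi([c])=\lambda\neq 1$ and setting $h:=\mu\circ(c\times\mathrm{id}_G)\colon S^1\times G\to G$, where $\mu$ is the H-multiplication. Then $h$ restricted to $\{*\}\times G$ is homotopic to $\mathrm{id}_G$, while its restriction to $S^1\times\{x\}$ represents $[c]$ for every $x\in G$; since a rank-one local system on a product is the external product of its restrictions to the two factors, $h^*R_\chi\cong R_\lambda\boxtimes R_\chi$, where $R_\lambda$ is the local system on $S^1$ of monodromy $\lambda$. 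The Künneth theorem with local coefficients then gives $H_*(S^1\times G;h^*R_\chi)\cong H_*(S^1;R_\lambda)\otimes_R H_*(G;R_\chi)$, which vanishes because $H_*(S^1;R_\lambda)=0$: the cellular chain complex is $R\xrightarrow{\lambda-1}R$, and $\lambda-1$ is invertible in the field $R$. Finally $\mathrm{id}_G$ factors up to homotopy as $G=\{*\}\times G\hookrightarrow S^1\times G\xrightarrow{h}G$, so the identity map of $H_*(G;R_\chi)$ factors through $H_*(S^1\times G;h^*R_\chi)=0$, forcing $H_*(G;R_\chi)=0$.

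Putting this together: the lemma applied to $\Omega_{q,1}N$, together with the homotopy invariance of homology with local coefficients, gives $H_*(\Omega_{q,g}N;\mathcal{R})=0$ for every $g$, hence $H_*(\Omega_qN;\mathcal{R})=0$ for every $q\in N$. Thus in the Leray--Serre spectral sequence $E^2_{s,t}=H_s(N;\mathcal{H}_t)\Rightarrow H_{s+t}(\mathcal{L}N;\mathcal{R})=H_{s+t}(\mathcal{L}N)_{\tau(\sigma)}$, where $\mathcal{H}_t$ is the local system $q\mapsto H_t(\Omega_qN;\mathcal{R})$ on $N$, every stalk of every $\mathcal{H}_t$ vanishes, so $E^2=0$ and $H_*(\mathcal{L}N)_{\tau(\sigma)}=0$. (In characteristic $\neq 2$ the extra transgressed Stiefel--Whitney class only multiplies $\chi$ by a sign, which leaves it nontrivial, so nothing changes.) I expect the H-space lemma to be the main obstacle: nontrivial monodromy by itself does \emph{not} force acyclicity --- for instance it does not on $S^1\vee S^2$ --- so one genuinely has to use the loop multiplication to spread the spherical class $\alpha$ into a self-homotopy of the fibre and thereby factor the identity through an acyclic space. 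Correctly identifying the monodromy of $\tau(\sigma)$ on the non-contractible components of the fibre is the other point where care is required.
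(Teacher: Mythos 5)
Your proof is correct, and while it shares its skeleton with the paper's argument --- the based-loop fibration over $N$, acyclicity of the fibre via the H-space trick, and the Serre spectral sequence with local coefficients --- it diverges at the one point where the paper leaves topology. The paper runs the spectral sequence only for the contractible component $\Omega^0N\hookrightarrow\mathcal{L}^0N\to N$, citing \cite{AlbersFrauenfelderOancea2017} for the acyclicity of $\Omega^0N$, and then extends the vanishing from $\mathcal{L}^0N$ to all of $\mathcal{L}N$ by invoking the unital ring structure on $H_*(\mathcal{L}N)_{\tau(\sigma)}$ (the loop product of \cite{Ritter2013}): the unit lives in the contractible component, so killing that component kills everything. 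You instead run the spectral sequence for the full fibration $\Omega_qN\to\mathcal{L}N\to N$ and handle the non-contractible components of the fibre directly, observing that concatenation with a fixed $\gamma_g$ is a homotopy equivalence $\Omega_{q,1}N\to\Omega_{q,g}N$ under which the monodromy character is unchanged (the $\gamma_g$-half of the torus contributes nothing to $\int\Phi^*\sigma$), so every component of the fibre carries the same nontrivial character of $\pi_2(N)$ and is acyclic by your H-space lemma. Your route is therefore purely topological and self-contained --- you also prove the H-space lemma rather than citing it, and your factorization of $\mathrm{id}_G$ through the acyclic $S^1\times G$ via K\"unneth is sound --- whereas the paper's route is shorter but imports the string-topology product as a black box. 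Your parenthetical handling of the $w_2$-correction and your identification of ``not weakly exact'' with nonvanishing of $\langle[\sigma],\cdot\rangle$ on $\pi_2(N)$ are both fine.
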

Theorem \ref{thm:afo} is not explicitly stated in \cite{AlbersFrauenfelderOancea2017}. We outline its proof as Theorem \ref{thm:afo2} below. Thus by Theorem \ref{thm:main}, whenever $ \sigma$ is not weakly exact, the symplectic cohomology vanishes. From this we obtain our second main result.
\begin{thm}
\label{thm:hz-finite}
Let $N$ be a closed orientable manifold and let $ \sigma \in \Omega^2(N)$ denote a closed 2-form that is not weakly exact. Then the $ \pi_1$-sensitive Hofer-Zehnder capacity of any compact set $K \subset T^*N$ is finite.
\end{thm}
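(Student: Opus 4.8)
The plan is to reduce the statement to the vanishing of symplectic cohomology and then to derive finiteness of the capacity from that vanishing by a continuation-map argument of the by-now-standard type relating $SH^*$ to Hofer--Zehnder capacities.

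First I would combine Theorems \ref{thm:main} and \ref{thm:afo}. Since $\sigma$ is not weakly exact, the twisted loop space homology $H_*(\mathcal{L}N)_{\tau(\sigma)}$ vanishes, so the ring isomorphism of Theorem \ref{thm:main} gives $SH^*(T^*N;\omega_\sigma)=0$; in particular the unit $1\in SH^0(T^*N;\omega_\sigma)$ is zero. It then remains to show that, for a symplectic manifold to which the symplectic cohomology machinery of this paper applies, the vanishing of (the unit in) $SH^*$ forces $c_{HZ}^{\pi_1}(K)<\infty$ for every compact $K$. Fix such a $K$; enlarging it, I may assume $K=\overline U$ for an open set $U$ contained in a sublevel set of an exhausting function admissible for the $SH^*$-construction, so that Hamiltonians supported in $U$ extend to ones cofinal in the defining direct system. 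Suppose, for contradiction, that $c_{HZ}^{\pi_1}(U)=\infty$. Then for every $c>0$ there is an autonomous $F=F_c\colon T^*N\to[0,c]$, compactly supported in $U$, equal to $c$ on a nonempty open set, and admitting no non-constant contractible periodic orbit of $X_F$ of period at most $1$.

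I would then extend $F_c$ to a Hamiltonian $\widehat F_{c,\mu}$ that agrees with $F_c$ on a fixed compact neighbourhood $W$ of $U$ and is linear of small slope $\mu$ near infinity, with $\mu$ outside the discrete set of periods of the Reeb-type dynamics governing the end. By admissibility of $F_c$, the only contractible $1$-periodic orbits of $\widehat F_{c,\mu}$ are the constants at the maximum locus of $F_c$, the constants in $W\setminus\operatorname{supp}F_c$, and orbits concentrated in the conical end. The crucial input that makes Floer theory for such Hamiltonians and the relevant continuation maps well-behaved is a no-escape lemma --- Floer continuation cylinders between the Hamiltonians involved remain in a fixed compact region --- and this confinement is exactly what the dissipative method of \cite{Groman2015} provides on $(T^*N,\omega_\sigma)$ for arbitrary $\sigma$, and is already built into the machinery of this paper. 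Granting it, a sandwiching argument --- comparing $\widehat F_{c,\mu}$ with honest slope-$\mu$ Hamiltonians lying below and above it and differing from each other by a constant, whose Floer cohomologies are therefore canonically identified --- locates inside $HF^*(\widehat F_{c,\mu})$ a copy of the Floer cohomology of a cofinal slope-$\mu$ Hamiltonian. Letting $\mu\to\infty$ and observing that the class carried by the maximum locus of $F_c$ cannot be killed (there being no orbit of the requisite bounded action available, precisely because $F_c$ has no fast orbits) then produces a nonzero element of $SH^*(T^*N;\omega_\sigma)$ once the oscillation $c$ exceeds the escape energy of $W$. This contradicts $SH^*(T^*N;\omega_\sigma)=0$, so $c_{HZ}^{\pi_1}(U)<\infty$ and hence $c_{HZ}^{\pi_1}(K)\le c_{HZ}^{\pi_1}(U)<\infty$.

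The main obstacle, I expect, lies in the second half: both the no-escape estimate for the magnetic Floer equation --- where $\pi^*\sigma$ is bounded but not compactly supported, and the action functional on the contractible loop space is well-defined only modulo the period group of $\sigma$ --- and the task of running the continuation/sandwiching argument in a way insensitive to this multivaluedness, for instance through the telescope complex computing $SH^*$ and its boundary depth rather than a genuine $\mathbb{R}$-filtration. Both concerns, however, fall within the scope of the symplectic cohomology package developed earlier in the paper, so that once Theorems \ref{thm:main} and \ref{thm:afo} are in hand the capacity bound should follow as a formal --- if not entirely short --- consequence.
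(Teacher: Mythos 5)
Your first step is exactly the paper's: Theorems \ref{thm:main} and \ref{thm:afo} give $SH^*(T^*N;\omega_\sigma)=0$, and the whole content is then to show that vanishing forces $c_{HZ}^{o}(K)<\infty$. It is in this second half that your sketch has a genuine gap. The sentence ``observing that the class carried by the maximum locus of $F_c$ cannot be killed (there being no orbit of the requisite bounded action available\dots) then produces a nonzero element of $SH^*$'' is not an argument but precisely the non-formal core of the theorem, and as stated it does not go through: a nonzero element of $SH^*$ is an element of a direct limit over \emph{all} slopes, so you need the class of the maximum to survive continuation to arbitrarily large slope $\mu$, and the action window in which it could be killed a priori depends on $\mu$ (new orbits in the conical end appear as $\mu$ grows). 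Your proposed threshold ``once the oscillation $c$ exceeds the escape energy of $W$'' is a geometric quantity and is not the right one; the correct threshold is Floer-theoretic, namely the finite stage/action at which the unit dies in the directed system once one knows $SH^*=0$, and one must show this threshold can be chosen uniformly in the slope. The sandwiching between two slope-$\mu$ Hamiltonians differing by a constant does not supply this uniformity, and the appeal to ``boundary depth of the telescope complex'' is exactly the missing lemma, not a reduction of it. Note also that the multivalued action in the non-exact magnetic case is not a side issue here: without a genuine $\mathbb{R}$-valued selector you need a substitute, and you never specify one.

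For comparison, the paper closes this gap with two specific inputs. First, it passes from $SH^*(T^*N;\mathcal{H},\omega_\sigma)$ to the localized invariant $SH^*(T^*N|K)$ via a \emph{unital ring map}, so vanishing of the global $SH^*$ kills the unit in a directed system of Hamiltonians $H_i=h_i\circ F_K$ adapted to $K$; this converts ``$SH^*=0$'' into the quantitative statement that $\rho(H_i;1)\le -a$ for $i$ large, where $\rho(\cdot;1)$ is the spectral invariant defined through the valuation on Floer cohomology over the Novikov ring (this valuation is what replaces the missing $\mathbb{R}$-filtration in the twisted setting). Second, it uses the Usher-type identity of Lemma \ref{lmSlowHam}: for an autonomous dissipative Hamiltonian with no non-constant contractible orbits of period $\le 1$ one has $\rho(H;1)=-\min H$. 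An admissible Hofer--Zehnder function with oscillation larger than a fixed constant can be extended so that, after adding a constant, it dominates some $H_{i}$ with $\rho(H_{i};1)\le -1$; monotonicity of $\rho$ then contradicts $\rho(H+A;1)=-\min(H+A)=0$. Your proposal would become correct if you replaced the ``cannot be killed'' step by an argument of this kind (or an equivalent action-selector statement proved in the dissipative, Novikov-coefficient framework); without it, the reduction from $SH^*=0$ to finiteness of the capacity is not established.
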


\begin{rem}
The assumption that $N$ is orientable in Theorems \ref{thm:main} and \ref{thm:hz-finite} can likely be dropped by making use of Abouzaid's approach \cite{Abouzaid2015} to the symplectic cohomology of cotangent bundles. In this case the right-hand side of \eqref{eq-thm-main-iso} would need further twisting by the pull-back of the orientation local system to the loop space. Nevertheless, we will not pursue this topic in the present paper.
\end{rem}

Finiteness of the $\pi_1$-sensitive Hofer-Zehnder capacity of $K$ implies almost everywhere existence of periodic orbits close to the regular energy level $ \partial K$ \cite[Chapter 4, Theorem 4]{HoferZehnder1994}. In particular, this includes the case of regular energy level sets of Tonelli Hamiltonians on $T^*N$, which gives a new proof of the following result of Asselle and Benedetti.
\begin{cor}[Asselle and Benedetti \cite{AsselleBenedetti2017}]
Let $N$ be a closed orientable\footnote{In \cite{AsselleBenedetti2017} Asselle and Benedetti do not need the orientability hypothesis.} manifold and let $ \sigma \in \Omega^2(N)$ denote a closed 2-form that is not weakly exact. Let $H$ denote a Tonelli Hamiltonian. Then for almost every $k > \operatorname{min}(H)$, the energy level $H^{-1}(k)$ carries a contractible periodic orbit.
\end{cor}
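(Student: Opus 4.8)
The plan is to feed the compact sublevel sets of \(H\) into Theorem~\ref{thm:hz-finite} and then invoke the standard Hofer--Zehnder mechanism, which produces periodic orbits on almost every energy level of a proper function defined on a set of finite Hofer--Zehnder capacity.

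Recall that a Tonelli Hamiltonian \(H \colon T^*N \to \R\) is fibrewise strictly convex and fibrewise superlinear; since \(N\) is closed, \(H\) is therefore proper and bounded below, and for every \(k > \min(H)\) the sublevel set \(K_k := \{H \le k\}\) is a compact subset of \(T^*N\). For a regular value \(k\) of \(H\) one has \(\partial K_k = H^{-1}(k)\), and by Sard's theorem the set of regular values \(k > \min(H)\) has full measure.

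Now fix a regular value \(k_0 > \min(H)\). By Theorem~\ref{thm:hz-finite} the \(\pi_1\)-sensitive Hofer--Zehnder capacity of the compact set \(K_{k_0}\) is finite. We then apply the \(\pi_1\)-sensitive form of the Hofer--Zehnder existence theorem (\cite[Chapter~4, Theorem~4]{HoferZehnder1994}, with the free-homotopy-class refinement used in \cite{AsselleBenedetti2017}) to the proper function \(H\) restricted to the interior of \(K_{k_0}\). Finiteness of the capacity implies that for almost every \(k \in (\min(H), k_0)\) the level set \(H^{-1}(k)\) carries a periodic orbit of the Hamiltonian flow of \(H\) that is contractible in \(T^*N\) (equivalently, in \(N\), since \(T^*N\) deformation retracts onto \(N\)). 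Finally, letting \(k_0\) run over a sequence of regular values tending to \(+\infty\) and taking the union of the resulting full-measure subsets of \((\min(H), k_0)\), we obtain the conclusion for almost every \(k > \min(H)\).

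The only delicate point is that one must use the \(\pi_1\)-sensitive version of the Hofer--Zehnder theorem rather than its classical form: the classical statement detects periodic orbits but gives no control on their free homotopy class, whereas the argument built on the \(\pi_1\)-sensitive capacity forces the fast orbits associated to an admissible Hamiltonian supported in \(K_{k_0}\) to be contractible. The remaining ingredients --- properness of \(H\), compactness and exhaustion of its sublevel sets, Sard's theorem, and the countable measure-theoretic patching --- are routine.
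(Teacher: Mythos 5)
Your argument is correct and is essentially the paper's own: the paper likewise feeds compact (sublevel) sets into Theorem \ref{thm:hz-finite} and invokes the $\pi_1$-sensitive almost-existence mechanism of \cite[Chapter 4, Theorem 4]{HoferZehnder1994} for proper Tonelli Hamiltonians, whose sublevel sets exhaust $T^*N$. Your write-up merely makes explicit the routine steps (compactness of sublevels, Sard, monotone exhaustion in $k_0$) that the paper leaves implicit.
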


Let us briefly discuss the proof of Theorem \ref{thm:main}. We show that the symplectic cohomology of $(T^*N, \omega_{\sigma})$ is isomorphic as a ring to the symplectic cohomology of $(T^*N, \omega)$ with twisted coefficients, as defined by Ritter in \cite{Ritter2009}, who also showed the latter is isomorphic to twisted loop space homology.

Theorem \ref{thm:main} is a corollary of Theorem \ref{tmMainIso} below which constructs a symplectic cohomology sheaf over $H^2(N)$, with stalk over $[\sigma]\in H^2(N)$ given by $SH^*(T^*N \! \colon \! \omega_\sigma)$. This can be viewed as a closed string instance of family Floer cohomology, and the argument is reminiscent to that of \cite{Abouzaid2017}. From a technical point of view, Theorem \ref{tmMainIso} is the main contribution of this paper.

The question of how Floer-theoretic invariants behave under deformation of the symplectic form is highly non-trivial, even in the case where the perturbation is compactly supported (see for instance Ritter \cite{Ritter2010a} and Zhang \cite{Zhang2016}). While the applications in this paper are focused on magnetic cotangent bundles, the proof of Theorem \ref{tmMainIso} only uses the fact that the cotangent bundle is conical at infinity and that the twisting form is scaling invariant. Thus the theorem holds more generally for symplectic manifolds with an end modeled on the positive end of the symplectization of a contact manifold and with the symplectic form deformed by a $2$-form which is scaling invariant near infinity. Another major setting where our work should be applicable is hyperk\"ahler manifolds, moving beyond the ALE case covered by Ritter in \cite{Ritter2010a}.
Our argument works only for a strictly non-quantitative version of symplectic cohomology. Nevertheless, Theorem \ref{thm:hz-finite} shows it has implications for quantitative symplectic topology.
Let us comment on the relation of the construction of symplectic cohomology in Theorem \ref{thm:main} to the notion of ``universal'' symplectic cohomology in \cite{Groman2015}. Therein a universal directed system is constructed for any geometrically bounded symplectic manifold $(M,\omega)$, associating a Floer cohomology with quite general Hamiltonians which are proper and bounded from below. Furthermore, to each monoid $\mathcal{H}\subset C^0(M)$, there is associated a ring $SH^*(M;\mathcal{H}  \colon \! \omega)$. The monoid which enters in the definition of symplectic cohomology in Theorem \ref{thm:main} is that consisting of Hamiltonians which are $O(|p|)$ as $|p|\to\infty$.

This choice of $\mathcal{H}$ is crucial. In fact, if one considers Hamiltonians which are $O(|p|^2)$ as $|p|\to\infty$, a counter-example can be found by considering the example of $N=\T^2$ and $\sigma$ the area form. It is shown in \cite{FrauenfelderMerryPaternain2015} that considering $H(p,q)=a|p|^2$ the Floer cohomology $HF^*(H)$ can be made to vanish in any given degree by taking $a$ large enough.  In particular, for $\mathcal{H}=\{nH\}$ we have $SH^*(T^*N; \mathcal{H}  \colon \! \omega_\sigma)=0$. On the other hand we have in this case that $
H_{n-*}( \mathcal{L}N )_{ \tau( \sigma)}\neq 0$.

\begin{rem}
In this paper we focus only on applications to non-weakly exact magnetic forms. In a sequel to the present paper we discuss applications of Theorem \ref{thm:main} in the case where $ \sigma$ \textit{is} weakly exact. In this case there is a certain ``critical'' energy level $c( \sigma, g)$, called the  \textit{Ma\~n\'e critical value}. The dynamical behaviour of the magnetic geodesic flow varies dramatically depending whether one is above or below the critical value. Nevertheless, under a topological assumption on the manifold, the $ \pi_1$-sensitive Hofer-Zehnder capacity is always finite, and in some cases, admits a quantitative estimate. We will explore this further in \cite{GromanMerry2022}.
\end{rem}

\textit{Remark:} During the course of the preparation of this article Benedetti and Ritter released \cite{BenedettiRitter2018}, which among other things proves a version of Theorem \ref{thm:main} for surfaces. \\

We conclude this Introduction with two further applications of Theorem \ref{thm:main} that go in slightly different directions. Firstly, we also obtain existence results for \textit{non-contractible} periodic orbits. The next result is proved as Theorem \ref{thm:non-contract-proof} in Section \ref{sec:applications}.
\begin{pr}
\label{pr:non-contractible}
Let $(N,g)$ be a closed orientable Riemannian manifold and let $ \sigma$ denote a closed 2-form. Let $a \in [S^1, N]$ be a non-trivial free homotopy class. Let $A \subset (0 , \infty)$ be an open set such that there are no periodic magnetic geodesics $ q$ with length $\ell_g(q)\in A$. Then if $ \mu$ denotes the Lebesgue measure, one has
$$
\lim_{ n \to \infty} \frac{1}{n} \mu( A \cap [0,n]) = 0.
$$
\end{pr}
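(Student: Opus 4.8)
The plan is to deduce this from Theorem~\ref{thm:main} by a Struwe-type monotonicity argument, in the spirit of the proof of finiteness of the Hofer--Zehnder capacity (Theorem~\ref{thm:hz-finite}), but now keeping track of the free homotopy class $a$. First I would pass from magnetic geodesics to Floer orbits: for a radial Hamiltonian $H_r=h_r(|p|)$ with $h_r$ convex of asymptotic slope $r$, the non-constant $1$-periodic orbits of $H_r$ lying in the class $a$ correspond — after passing to sphere bundles and reparametrising — to closed magnetic geodesics representing $a$, and the Floer chain complex $CF^*(H_r;\omega_\sigma)_a$ acquires new generators only as $r$ crosses an element of the associated length spectrum $\Lambda_a\subset(0,\infty)$. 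Since $a$ is non-trivial this spectrum is unchanged along $A$, and the ``slope $\leftrightarrow$ length'' dictionary is asymptotically affine, hence preserves Lebesgue density; so it suffices to show that the open set $\widetilde A\subset(0,\infty)$ of slopes along which $r\mapsto CF^*(H_r;\omega_\sigma)_a$ is locally constant has density zero.

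Restricting the isomorphism of Theorem~\ref{thm:main} to the summand indexed by $a$ gives $\varinjlim_r HF^*(H_r;\omega_\sigma)_a\cong SH^*_a(T^*N;\omega_\sigma)\cong H_{n-*}(\mathcal{L}_aN)_{\tau(\sigma)}$, the continuation maps realising the limit. If this group does not vanish then the images of the $HF^*(H_r;\omega_\sigma)_a$ in the limit form an increasing filtration that vanishes on $\widetilde A$ and exhausts something non-zero, so $\widetilde A$ is already bounded. In general I would attach to $H_r$ a non-decreasing quantity $c(r)\ge0$ — a spectral invariant of a fixed non-zero class if $SH^*_a(T^*N;\omega_\sigma)\neq0$, and the boundary depth of the filtered complex $CF^*(H_r;\omega_\sigma)_a$ otherwise — enjoying (a) $c$ locally constant on $\widetilde A$ (any filtered invariant of a locally constant family is locally constant); (b) $c$ is $1$-Lipschitz in the length parameter of the first step; (c) $c(r)/r\to1$ as $r\to\infty$, this last point being where the identification with $H_{n-*}(\mathcal{L}_aN)_{\tau(\sigma)}$ and an energy estimate on $\mathcal{L}_aN$ enter. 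Granting (a)--(c), $0\le c'\le1$ a.e.\ with $c'=0$ on $\widetilde A$, so
$$
n-o(n)=c(n)-c(0)=\int_0^n c'(r)\,dr\le\mu\bigl([0,n]\setminus\widetilde A\bigr),
$$
whence $\mu(\widetilde A\cap[0,n])=o(n)$; transporting this back along the slope--length correspondence gives $\tfrac1n\mu(A\cap[0,n])\to0$.

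The crux is to produce $c$ with the Lipschitz constant in (b) and the asymptotic slope in (c) \emph{exactly} equal: any gap between them would only bound the density of $A$ by a constant strictly below $1$. The Lipschitz estimate is the delicate part, because $\sup(H_{r'}-H_r)=+\infty$ for linear-growth Hamiltonians and so the continuation-energy inequality must be localised to the shell carrying the orbits in class $a$; and the case $SH^*_a(T^*N;\omega_\sigma)=0$ — which by Theorem~\ref{thm:afo} includes every $\sigma$ that is not weakly exact — forces one to argue with the boundary depth, which in that regime is what grows linearly in $r$. Finally, the reason the conclusion is only ``density zero'' rather than ``measure zero'', unlike in the compact-hypersurface version of the argument, is precisely the linear growth: $c$ may genuinely be constant on a subset of $\widetilde A$ of positive measure.
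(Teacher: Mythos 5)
There is a genuine gap: your argument hinges on the quantitative inputs (b) and (c) --- a continuation estimate for the class-$a$ spectral invariant (or boundary depth) that is \emph{exactly} $1$-Lipschitz in the slope, and the \emph{exact} linear asymptotics $c(r)/r\to 1$ --- and neither is available. In the case of interest ($\sigma$ not weakly exact, which by Theorem \ref{thm:afo} forces $SH^*_a(T^*N;\omega_\sigma)=0$), you are thrown onto the boundary-depth branch, and there the ``identification with $H_{n-*}(\mathcal{L}_aN)_{\tau(\sigma)}$'' you invoke for (c) is vacuous: that group is zero, so it cannot pin down any growth rate of the filtered complex. The paper's deformation machinery is explicitly non-quantitative --- Theorem \ref{lmMainEstimate} only controls energies up to a constant $C$ close to $1$ and up to the bounded terms $\int w^{0*}\sigma$ per generator --- so it does not transport action filtrations with the precision (b)--(c) require; establishing linear boundary-depth growth for the magnetic complex would be a substantial new theorem, not a routine energy estimate. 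A smaller but real error: your claim that nonvanishing of $SH^*_a(T^*N;\omega_\sigma)$ would force the gap set $\widetilde A$ to be \emph{bounded} is false; local constancy of the complex on an unbounded open set is perfectly compatible with a nonzero direct limit (the spectrum could be an unbounded sequence with long gaps), which is exactly why some further idea is needed even in that case.

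The paper's proof (Theorem \ref{thm:non-contract-proof} and the theorem following it) avoids all quantitative invariants by a reparametrization trick you are missing. By Theorem \ref{tmMainIso}, the \emph{non-magnetic} class-$a$ symplectic cohomology $SH^{*,a}(T^*N;\omega_0)$ --- nonzero by Viterbo's theorem since $a\neq 0$ --- can be computed from complexes whose generators are $\omega_\sigma$-periodic orbits (with twisted coefficients). If $A$ had positive lower density, set $h(t)=\int_0^t 1_A$; then $\epsilon t\le h(t)\le t$, so the Hamiltonians $H_i=i\,h\circ H$ form a cofinal family among linear-growth Hamiltonians, yet $h'$ is supported in $A$, so (after arranging flat stretches of length $1/2$ in each unit interval to retain dissipativity, and a small smoothing) no $H_i$ has any $1$-periodic orbit in the class $a$ at all. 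The class-$a$ cohomology computed from this cofinal family is therefore zero, contradicting Viterbo nonvanishing. Thus positive density is converted directly into cofinality of orbit-free Hamiltonians, and no Lipschitz or asymptotic-slope estimates are needed.
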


The main idea is that by Theorem \ref{tmMainIso}  the  symplectic cohomology for the non-magnetic case can be obtained from a chain complex generated by magnetic geodesics via an appropriate algebraic tweak of the Floer complex.

\begin{rem}
Taking $\sigma = 0$ in Proposition \ref{pr:non-contractible} shows that no closed orientable Riemannian manifold has a ``longest'' closed geodesic in a non-trivial free homotopy class\footnote{We thank Gleb Smirnov for bringing this question to our attention.}. Although this statement is likely known to experts, we were unable to locate a proof in the literature. 
\end{rem}

\begin{rem}
For a generic metric $g$ a much stronger statement can be shown by more elementary methods. Namely, for each non-trivial free homotopy class $a$ there is a magnetic geodesic for all high enough energy levels. In the non-generic case Proposition \ref{pr:non-contractible} permits the existence of an infinite sequence of ``gaps'', i.e, open intervals of levels for which there are no periodic orbits in a given class. We do not know whether this actually occurs.  It is also natural to ask whether Proposition \ref{pr:non-contractible} can be upgraded to a statement concerning Biran-Polterovich-Salamon capacities. For this connection we point the reader to \cite{Gong2018} for related results.
\end{rem}
Secondly, one can also use vanishing of symplectic cohomology to obtain obstructions to Lagrangian embeddings in magnetic cotangent bundles. Here is an application of this principle, which is proved as Theorem \ref{thm:lagapp} in Section \ref{sec:applications}.
\begin{pr}
\label{pr:lagrangian}
Let $N$ be a closed orientable manifold of finite type and let $ \sigma \in \Omega^2(N)$ denote a closed 2-form that is not weakly exact. Suppose $ i \colon L \hookrightarrow T^*N$ is a closed Lagrangian embedding. Then $i_* \colon H_*(L ; \mathbb{Z}) \to H_*(T^*N ; \mathbb{Z})$ is not injective.
\end{pr}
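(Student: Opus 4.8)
The plan is to play the vanishing of symplectic cohomology against the closed--open string map. First, combining Theorem \ref{thm:main} with Theorem \ref{thm:afo}: since $\sigma$ is not weakly exact,
$$
SH^*(T^*N;\omega_\sigma)\;\cong\;H_{n-*}(\mathcal{L}N)_{\tau(\sigma)}\;=\;0,
$$
so the unit $1\in SH^0(T^*N;\omega_\sigma)$ vanishes. Suppose, towards a contradiction, that there is a closed Lagrangian $i\colon L\hookrightarrow T^*N$ with $i_*\colon H_*(L;\Z)\to H_*(T^*N;\Z)$ injective. I would then argue that the self-Floer cohomology $HF^*(L,L)$ (with coefficients in the Novikov field $\Lambda$ over $\mathbb{K}$) is non-zero, which is impossible: the closed--open map $\CO\colon SH^*(T^*N;\omega_\sigma)\to HF^*(L,L)$ is a unital homomorphism, so it would carry $1=0$ to the unit of $HF^*(L,L)$, forcing the latter to vanish.

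The key point is that injectivity of $i_*$ makes $L$ behave, Floer-theoretically, like an exact Lagrangian. Let $u\colon(D^2,\partial D^2)\to(T^*N,L)$ be a non-constant $J$-holomorphic disc for an $\omega_\sigma$-compatible almost complex structure $J$. Its boundary loop bounds the disc $u$ in $T^*N$, so $i_*[\partial u]=0$ in $H_1(T^*N;\Z)$, and hence $[\partial u]=0$ in $H_1(L;\Z)$. Choosing a torsion-free connection on $N$ gives a symplectic vector bundle splitting $T(T^*N)\cong\pi^*(TN\oplus T^*N)$, so $c_1(T^*N)=0$; therefore the Maslov index of $u$ equals the pairing of the Maslov class $\mu_L\in H^1(L;\Z)$ with $[\partial u]\in H_1(L;\Z)$, which vanishes. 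Thus \emph{every} non-constant holomorphic disc with boundary on $L$ has Maslov index $0$. Consequently all the $A_\infty$-operations $\m_k$ of the Fukaya algebra of $L$ are homogeneous for the $\Z$-grading on $CF^*(L,L)\cong H^*(L;\Lambda)$, which is concentrated in degrees $0,\dots,n$; in particular the curvature term $\m_0$ lies in degree $2$ and one checks that $L$ is unobstructed, so $HF^*(L,L)$ is defined, with a degree $+1$ differential $\m_1$. Since no group maps into $CF^0(L,L)$ and the unit $e_L$ is a $\m_1$-cycle, $HF^0(L,L)\ni[e_L]\neq 0$, so $HF^*(L,L)\neq 0$, completing the contradiction.

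The hard part is not this formal deduction but the underlying Floer-theoretic infrastructure. Because $\sigma$ is not weakly exact, $[\omega_\sigma]$ pairs non-trivially with $\pi_2(T^*N)=\pi_2(N)$, so $(T^*N,\omega_\sigma)$ is genuinely non-exact and non-monotone and one must contend with real sphere and disc bubbling. The steps that carry the weight are: to construct the Fukaya $A_\infty$-algebra of the closed Lagrangian $L$ and its obstruction theory in the geometrically bounded, scaling-invariantly twisted setting of the paper, following Fukaya--Oh--Ohta--Ono (this is where the finite-type hypothesis on $N$ is used); to deduce genuine unobstructedness of $L$ from the vanishing of all disc Maslov indices established above; and to build the unital closed--open map $\CO$ out of the non-quantitative symplectic cohomology of Theorem \ref{thm:main} into $HF^*(L,L)$. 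Given these inputs the argument closes as sketched above.
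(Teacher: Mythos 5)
Your top-level strategy is the same as the paper's: vanishing of $SH^*(T^*N;\omega_\sigma)$ (Theorems \ref{thm:main} and \ref{thm:afo}), a unital closed--open map into $HF^*(L,L)$, and non-vanishing of $HF^*(L,L)$ together give a contradiction. The divergence --- and the gap --- is in how you establish that $HF^*(L,L)$ is defined and non-zero. The paper simply invokes the theorem of Fukaya--Oh--Ohta--Ono that $H_*$-injectivity of $i$ guarantees the existence of a bounding cochain $b$ with $HF^*_b(L,L)\cong H^*(L;\Lambda_\sigma)\neq 0$; that theorem uses injectivity on all of $H_*$ in an essential way to kill the obstruction classes. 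You instead use only injectivity on $H_1$ to conclude that every holomorphic disc with boundary on $L$ has Maslov index $0$ (this part is correct: $[\partial u]$ dies in $H_1(T^*N)$ because $u$ fills it, hence $[\partial u]=0$ in $H_1(L)$ by injectivity, hence $[u]$ comes from $H_2(T^*N)$ and $\mu(u)=2\langle c_1,\cdot\rangle=0$ by \eqref{eq:c1=0}), and then assert that ``one checks that $L$ is unobstructed'' because $\m_0$ is concentrated in degree $2$.

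That last step is the gap, and it is not merely missing infrastructure: degree $2$ is exactly the degree in which the obstruction classes always live. The curvature term attached to a class $\beta$ with $\mu(\beta)=0$ is $(\mathrm{ev}_0)_*[\mathcal{M}_1(\beta)]\in H_{n-2}(L)\cong H^2(L)$, and there is no general reason for it to vanish or to be a multiple of the unit (which sits in degree $0$); Maslov-zero discs are precisely the dangerous ones in obstruction theory, not the harmless ones. Without unobstructedness you have neither $\m_1^2=0$ nor a deformed differential, so your final step ($[e_L]\neq 0$ in $HF^0$) does not get off the ground. To close the argument you must either import the Fukaya--Oh--Ohta--Ono unobstructedness theorem for $H_*$-injective Lagrangians --- which is what the paper does, and which uses the full homological hypothesis rather than just $H_1$ --- or supply an independent argument that the degree-$2$ obstruction classes vanish. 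The remaining formal steps (unitality of $\CO$, non-exactness of $e_L$ in a complex concentrated in degrees $0,\dots,n$, and the contradiction with $SH^*=0$) do match the paper's reasoning.
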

The proof uses the fact that if $L$ is $H_*$-injective the machinery of \cite{FukayaOhOhtaOno2009} allows for the definition and non-vanishing of $HF^*(L,L)$ as well as the existence of a unital ring map $SH^*(T^*N \! \colon \! \omega_{ \sigma}) \to HF^*(L,L \! \colon \! \omega_{ \sigma})$ (the closed-open map). There are several variants of Proposition \ref{pr:lagrangian}. For example, instead of $H_*$-injectivity, it suffices to assume that $L$ is simply connected.  A closely related statement which does not rely on the machinery of \cite{FukayaOhOhtaOno2009} is Theorem \ref{thm:LagBdDisc} below stipulating that \textit{any} closed Lagrangian must bound holomorphic discs of a priori bounded energy.

\begin{rem}
Finally, let us briefly consider non-closed Lagrangian submanifolds. This result is technically more challenging, as it requires the construction of the \textit{twisted wrapped Floer homology}, and the full details will appear in \cite{GromanMerry2022}.

If $S \subset N$ is a closed submanifold then the \textit{conormal bundle} $C^*S \subset T^*N$ consists of pairs $(q,p)$ such that $p|_{T_qS} = 0$. Thus if $S$ is a point $q$ then $C^*S = T_q^*N$, and if $S = N$ then $C^*N$ is the zero section. One checks easily that $C^*S$ is a Lagrangian submanifold of $(T^*N, \omega_{ \sigma})$ if and only if $ \sigma|_S = 0$; moreover $H_*(S ; \mathbb{Z}) \to H_*(N ; \mathbb{Z})$ is injective if and only if $H_*(C^*S; \mathbb{Z}) \to H_*( T^*N; \mathbb{Z})$ is injective. In this case the vanishing of symplectic cohomology gives almost everywhere existence (with respect to energy) of magnetic chords that start and end in $C^*S$, whenever $ \sigma|_S =0$ and $S$ is $H_*$-injective.
\end{rem}

\textit{Outline:} In Section \ref{sec:prelim} we present various preliminaries on the geometry of magnetic cotangent bundles. In Sections \ref{sec:disspative}-\ref{sec:estimate_proofs} we construct the symplectic cohomology groups $SH^*(T^*N \! \colon \! \omega_{ \sigma})$ and prove Theorem \ref{thm:main}. Finally, Proposition \ref{thm:hz-finite}, and Proposition \ref{pr:non-contractible} and Proposition \ref{pr:lagrangian} and  are proved in Section \ref{sec:applications}.

\textit{Acknowledgements:} We thank Mohammed Abouzaid, Gabriel P. Paternain, Jake. P. Solomon and Gleb Smirnov for helpful discussions. The first author was supported during various stages of the work by the ISF Grant 1747/13, Swiss National Science Foundation (grant \#200021$\_$156000) and by the Simons Foundation/SFARI ($\#$385571,M.A.). The second author was supported by the Swiss National Science Foundation (grant \#182564). 
\section{Preliminaries}
\label{sec:prelim}
\subsection{The geometry of magnetic cotangent bundles}

We begin by collecting some elementary results on the geometry of magnetic cotangent bundles.

Let $N$ denote a closed orientable manifold and $\pi \colon T^*N \to N$ the cotangent bundle. We write a point $x \in T^*N$ as a pair $x = (q,p)$, so that $q \in N$ and $p \in T_q^*N$. Denote by $\lambda \in \Omega^1(T^*N)$ the \textit{Liouville 1-form}, defined by
\[
  \lambda_{(q,p)}(\xi) := p( d \pi(q,p)[ \xi]), \qquad \xi \in T_{(q,p)}T^*N.
\]
We will denote as before $\omega := - d \lambda$, which we refer to as the canonical symplectic form on $T^*N$.
Fix a Riemannian metric $g = \left\langle \cdot,\cdot \right\rangle$ on $N$, and denote by $\nabla$ the Levi-Civita connection on $g$. The metric  gives an isomorphism $TN \cong T^*N$, and it what follows we will wherever convenient abuse notation and regard this  isomorphism as an identification.
The metric also determines a vector bundle isomorphism
\begin{equation}
\label{eq:splitting}
  TT^*N \cong TN\oplus T^*N
\end{equation}
Strictly speaking, the right-hand side is the vector bundle $\pi^*TN \oplus \pi^*T^*N$ over $T^*N$, whose fibre over $(q,p)$ is $T_q N \oplus T_q^*N$. This isomorphism is given by
\[
  \xi  \mapsto (d \pi (\xi), \kappa(\xi)),
\]
where $\kappa :TT^*N \to T^*N$ is the connection map of the Levi-Civita connection. Given a curve $x(t)$ in $T^*N$, writing $x(t) = (q(t),p(t))$, the connection map $\kappa$ is given by
\[
  \kappa(\dot x(t)) = \nabla_t p(t).
\]
and thus the isomorphism \eqref{eq:splitting} is given by
\[
  \dot x (t) \cong ( \dot q (t) , \nabla_t p (t)).
\]
We will use the notation $\xi \cong (\xi^q, \xi^p)$ to indicate that a tangent vector $\xi \in TT^*N$ is identified with the pair $(\xi^q, \xi^p) \in TN \oplus T^*N$ under this splitting. A vector $\xi$ with $\xi^q = 0$ is called \textit{vertical} and a vector $\xi$ with $\xi^p = 0$ is called \textit{horizontal}. This splitting allows us to define a natural Riemannian metric $\widetilde{g}$ on $T^*N$, called the \textit{Sasaki metric}, by setting
\[
\widetilde{g}(\xi, \zeta) := \left\langle \xi^q, \zeta^q \right\rangle	 + \left\langle \xi^p, \zeta^p \right\rangle.
\]
The symplectic form $\omega$ can be written in terms of this splitting as
\begin{equation}
\label{eq:omega_in_splitting}
  \omega(\xi, \zeta) = \left\langle \xi^q, \zeta^p \right\rangle - \left\langle \zeta^q, \xi^p \right\rangle.
\end{equation}

Let $E_g \colon T^*N \to \mathbb{R}$ denote the autonomous energy Hamiltonian given by the Riemannian metric:
\begin{equation}
\label{eq:G-Ham}
  E_g(q,p) := \frac{1}{2}|p|^2.
\end{equation}
Let $\sigma$ be a closed $2$-form on $N$, and write as before, $$
\omega_{ \sigma} := \omega + \pi^* \sigma.
$$
We denote by $X_{\sigma,g}$ the vector field dual to $dE_g$ under $ \omega_{ \sigma}$:
$$
\omega_{ \sigma}(X_{ \sigma,g}, \cdot) = dE_g( \cdot).
$$
Denote by $ \varphi_{ \sigma,g}^t \colon T^*N \to T^*N$ the flow of $X_{ \sigma,g}$. We call $ \varphi_{ \sigma,g}^t$ a \textit{magnetic geodesic flow}. Note that if $ \sigma = 0$ then $ \varphi_{0,g}^t$ is the standard (co-)geodesic flow of $g$. As in the Riemannian case, by a \textit{magnetic geodesic} on $N$, we mean the projection of a flow line of $ \varphi_{ \sigma,g}^t$ to $N$. A magnetic geodesic is \textit{periodic} if it is the projection of a closed orbit of $ \varphi_{ \sigma,g}^t$. A magnetic geodesic is said to have \textit{energy} $e$ if the corresponding flow line $\gamma(t)$ of $ \varphi_{ \sigma,g}^t$ satisfies $E_g(\gamma(t)) = e$.

The \textit{Lorentz force} is the map $F = F_{\sigma, g} :TN \to TN$ defined by
\begin{equation}
\label{eq:lorentz_force}
  \sigma_q(u,v) = \left\langle F_q(u), v \right\rangle.
\end{equation}

In terms of the splitting \eqref{eq:splitting}, one has
\[
  \omega_{\sigma}(\xi, \zeta) = \left\langle \xi^q, \zeta^p \right\rangle - \left\langle \zeta^q, \xi^p \right\rangle + \left\langle F (\xi^q), \xi^p \right\rangle.
\]
Thus the vertical vectors still form a Lagrangian distribution on $TT^*N$, whereas the horizontal ones do not. The presence of a Lagrangian distribution implies that orientable magnetic cotangent bundles, like normal cotangent bundles, have vanishing first Chern class (cf \cite[Example 2.10]{Seidel2000}):
\begin{equation}
\label{eq:c1=0}
c_1(T^*N, \omega_{\sigma})=0.
\end{equation}
\subsection{Almost complex structures}\label{SeqACS}
Denote by $D_{R,g}^*N = D_R^*N \subset T^*N$ the disc bundle of radius $R$. Let $S^*N:= S^*_gN = \partial D_1^*N$ and $P^*N:=T^*N \setminus D_1^*N$. Define a diffeomorphism $\psi \colon (0,\infty)\times S^*N\to P^*N$ by $(r,x)\mapsto e^rx$. The first component of $\psi^{-1}$, namely, the function $r:=\ln|p|$, is referred to as the \emph{radial} coordinate determined by the metric $g$. Let
\[
\alpha:=\frac1{|p|}\sum p_idq_i,
\]
then, in the coordinates determined by $\psi$, $\alpha$ is translation invariant, the Liouville form is given as $e^r\alpha$ and the canonical symplectic form is
\[
-d(e^r\alpha)=-e^r(dr\wedge\alpha+d\alpha).
\]

An $\omega$-compatible almost complex structure $J$ is said to be \textit{conical at infinity} if it is invariant on $P^*N$ under translations of the radial coordinate, preserves $\ker\alpha\cap\ker dr,$ and satisfies 
\begin{equation}\label{eqMetComp}
dr \circ J = \alpha. 
\end{equation}
Denote by $g_{J,\omega}$ the metric $\omega(\cdot,J\cdot)$. Then on $P^*N$, we have
\[
g_{J,\omega}=e^r(dr^2+\alpha^2+d\alpha(J,\cdot)),
\]
where for a $1$ form $\beta$ we write $\beta^2:=\beta\otimes\beta$. Define a metric on the sphere bundle $S^*N$ by
\[
g_S:=\frac14(\alpha^2+d\alpha(J,\cdot)).
\]
Then letting $t^2:=4e^r$ we can rewrite the metric on $P^*N$ as
\begin{equation}\label{eqConicalForm}
dt^2+t^2g_S.
\end{equation}
In summary, we have proven
\begin{lem}
The metric $g_{J,\omega}$ is isometric to that of the cone over $S^*N$ equipped with the Riemannian metric $g_S$.
\end{lem}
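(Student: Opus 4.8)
The plan is to unwind all the definitions over $P^*N\cong(0,\infty)\times S^*N$ in a translation-invariant frame adapted to $\alpha$ and $r$, and then to recognise $g_{J,\omega}$ as a warped product.

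First I would fix the frame. Since $\alpha$ is translation invariant and restricts to a contact form on $S^*N$, introduce the radial vector field $\partial_r$ and the Reeb vector field $R$ of $\alpha|_{S^*N}$ (extended translation-invariantly), together with the distribution $\xi:=\ker\alpha\cap\ker dr$. This gives a translation-invariant splitting $TP^*N=\mathbb{R}\partial_r\oplus\mathbb{R}R\oplus\xi$ and the relations $dr(\partial_r)=1$, $\alpha(R)=1$, $\iota_{\partial_r}\alpha=\iota_{\partial_r}d\alpha=\iota_{R}d\alpha=0$, $dr|_\xi=\alpha|_\xi=0$; moreover $\omega|_\xi=-e^r\,d\alpha|_\xi$ is nondegenerate, so $\xi$ is a symplectic subbundle and $\mathbb{R}\partial_r\oplus\mathbb{R}R$ is its $\omega$-orthogonal complement.

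Second I would extract the linear algebra of a conical $\omega$-compatible $J$. Applying $dr\circ J=\alpha$ to $Jv$ and using $J^2=-\mathrm{Id}$ gives $\alpha\circ J=-dr$; hence $dr(J\partial_r)=\alpha(\partial_r)=0$ and $\alpha(J\partial_r)=-dr(\partial_r)=-1$, so $J\partial_r=-R+\eta$ with $\eta\in\xi$. Since $J$ preserves $\xi$ and is $\omega$-compatible, it preserves $\xi^{\omega}=\mathbb{R}\partial_r\oplus\mathbb{R}R$; feeding $w\in\xi$ into $\omega(J\partial_r,Jw)=\omega(\partial_r,w)=0$ and using $\iota_{\partial_r}\alpha=\iota_{R}d\alpha=0$ then forces $\omega(\eta,\,\cdot\,)|_\xi=0$, so $\eta=0$. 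Thus $J\partial_r=-R$ and $JR=\partial_r$. This identity is the one point that genuinely uses $\omega$-compatibility rather than only the three listed properties of a conical $J$, and I consider it the only mild subtlety in the proof.

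Third, the computation. For tangent vectors $v,w$, using $\omega=-e^r(dr\wedge\alpha+d\alpha)$,
$$
g_{J,\omega}(v,w)=\omega(v,Jw)=-e^r\big[(dr\wedge\alpha)(v,Jw)+d\alpha(v,Jw)\big].
$$
Substituting $dr(Jw)=\alpha(w)$ and $\alpha(Jw)=-dr(w)$ into the first term yields $e^r(dr\otimes dr+\alpha\otimes\alpha)(v,w)$; using $JR=\partial_r$, $J\partial_r=-R$, $\iota_{R}d\alpha=\iota_{\partial_r}d\alpha=0$ and the antisymmetry of $d\alpha$ turns the second term into $e^r\,d\alpha(Jv,w)$, a $2$-tensor supported on $\xi$ and symmetric there because $J|_\xi$ is compatible with $\omega|_\xi$. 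Hence
$$
g_{J,\omega}=e^r\big(dr\otimes dr+\alpha\otimes\alpha+d\alpha(J\cdot,\cdot)\big),
$$
and translation invariance of $\alpha$ and $J$ makes the bracketed tensor the pullback of a genuine tensor on $S^*N$. Finally set $t:=2e^{r/2}$, so that $dt\otimes dt=e^r\,dr\otimes dr$ and $t^2=4e^r$; the metric becomes $dt\otimes dt+t^2 g_S$ with $g_S=\tfrac14\big(\alpha^2+d\alpha(J\cdot,\cdot)\big)$ a Riemannian metric on $S^*N$, which is precisely the cone metric over $(S^*N,g_S)$ (restricted to the locus $t\in(2,\infty)$ coming from $r\in(0,\infty)$). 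This proves the lemma; beyond the identity $J\partial_r=-R$ noted above, the argument is bookkeeping in the frame $\{\partial_r,R,\xi\}$.
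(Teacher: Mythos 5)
Your proof is correct and follows essentially the same route as the paper: writing $\omega=-e^r(dr\wedge\alpha+d\alpha)$, deducing $g_{J,\omega}=e^r\bigl(dr^2+\alpha^2+d\alpha(J\cdot,\cdot)\bigr)$, and substituting $t=2e^{r/2}$ to exhibit the cone metric $dt^2+t^2g_S$. The only difference is that you spell out the linear-algebra step $J\partial_r=-R$, $\alpha\circ J=-dr$ (using compatibility), which the paper leaves implicit in its displayed computation.
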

As a particular consequence, the metric $g_{J,\omega}$ has bounded geometry.

We summarise various features we shall need. They are all immediate consequences of \eqref{eqConicalForm}
\begin{enumerate}
  \item
  The distance of a point on $P^*N$ to the unit sphere bundle is given by the function $t-2$. Indeed, we have \eqref{eqConicalForm} that $\|\nabla t\|=1$ so gradient curves are geodesics. Observe that the function $|p|$ is thus, roughly, the \textit{square} of the distance.

  \item
  The projection $\pi$ factors through translations in $r$.
  \item Translating a vector in the kernel of $dt$ from $r=r_0$ to $r=r_1$ has the effect of rescaling its norm by $e^{(r_1-r_0)/2}$. In particular, translation-invariant vector fields on $P^*N$ grow linearly with the distance.
  \item {\label{sCitD}}For an $m$-form $\sigma$ on $N$, the norm of $|\pi^*\sigma|$ decays roughly as $e^{-\frac{m}{2}r}$, or, as the inverse of the $m$th power of the distance from $S^*N$.
  \item
  The Hamiltonian flow with respect to $ \omega = \omega_0$ of $|p|$ is invariant under translations.
  \item
  Fix a magnetic form $\sigma$. Then there exists a family $J_s$ of almost complex structures such that $J_s$ is compatible with $\omega_{s\sigma}$ and for each $s$ we have that 
  \begin{equation}\label{eqAlmostConical}
  \|J_s-J\|_{T^*N\setminus D^*_RM}\to0
  \end{equation}
   uniformly in $C^{ \infty}$ with respect to $g_{J,\omega}$ as $R \to\infty$. To see this, first construct any smooth family $J_s$ of $\omega_{\sigma_s}$ compatible almost complex structures on the restriction $TT^*N|_{S^*N}$. For any $u>1$ let $S^*_uN$ be the sphere bundle of radius $u$. Let $\phi_u:P^*N\to P^*N$ be the Liouville scaling map given by $t\mapsto t+u$. Then $\phi_u$ maps $S^*N$ to $S^*_uN$ intertwining $\omega_{s\sigma/u}$ with $\omega_{\sigma}$. We extend $J_s$ to the outside of the unit sphere bundle by taking $J_s|_{S^*_uN}:=\phi_{u,*}J_{s/u}$, and then  extend $J_s$ to the unit disc bundle arbitrarily (but with smooth dependence on $t$). The resulting almost complex structure is as required. In particular, $g_{J_s, \omega_{s \sigma}}$ is quasi-isometric to $g_{J, \omega}$. 
   An $\omega_{s\sigma}$-compatible almost complex structure $J_s$ satisfying \eqref{eqAlmostConical} with respect to some conical almost complex structure $J$ will be referred to as being \emph{roughly conical near infinity.}
\end{enumerate}

We remark that asking an $ \omega$-compatible almost complex structure to be conical at infinity makes no requirements (other than compatibility) on the interior of $D^*_1N$. In fact as the next lemma shows, for any magnetic for $ \sigma$ on $N$, we can choose a conical $J$ such that the $L^{ \infty}$-norm of $ \pi^* \sigma$ with respect to the metric $g_{J, \omega}$ is as small as we like. This will be useful in the proof of Theorem \ref{tmMainIso} below.

\begin{lem}\label{lmRescale}
For any magnetic form $\sigma$ and any real number $\delta>0$ there exists an almost conical $\omega_{\sigma}$-comptible almost complex structure $J_{\sigma}$ such that with respect to the metric   $g_{J_{\sigma},\omega_{\sigma}}$ we have $ \| \pi^* \sigma \|_{ \infty} < \delta$.  
\end{lem}
\begin{proof}
By scaling the metric on $N$ we can assume that $\|\sigma\|_{\infty}\ll\delta$ with respect to the Riemannian metric $g$ on $N$. The projection $\pi:T^*N\to N$ is a Riemannian submersion with respect to the Sasaki metric $\widetilde{g}$. Thus, denoting by $\|\cdot\|^{\mathrm{Sasaki}}_{\infty}$ the induced norm on $TT^*N$,  we have $\|\pi^*\sigma\|^{\mathrm{Sasaki}}_{\infty}\ll\delta$. The Sasaki metric however is not conical at infinity. To fix this, denote by $\widetilde{J}$ the almost complex structure mapping $\xi^q$ to $\xi^p$ and $\xi^p$ to $-\xi^q$, so that $\omega(\widetilde{J}\cdot ,\cdot ) = \widetilde{g}.$ Then the restriction of $\widetilde{J}$ to the unit sphere bundle $S^*N$ satisfies \eqref{eqMetComp}. This means that if we extend $\widetilde{J}$ from $S^*N$ to $P^*N$ by translation in $t$ we obtain a continuous conical almost complex structure $J'_0$ on $TN$ which is smooth away from $S^*N$ and such that in the unit disc bundle $\omega(\widetilde{J}\cdot ,\cdot ) = \widetilde{g}.$ By slightly modifying $J'_0$ near $S^*N$, we obtain a smooth conical almost complex structure $J_0$ for which $\|\pi^*\sigma\|^{J_0}_{\infty}\ll\delta$. 

We are still not done, since $J_0$ is compatible with $\omega$ but not with $\omega_{\sigma}$. However, using the technique of \cite[Proposition 2.5.6]{McDuffSalamon2017} we can find an  almost conical almost complex structure $J_\sigma$ which is $\omega_\sigma$ compatible, such that $g_{J_0,\omega}$ is everywhere close to $g_{J_{\sigma},\omega_{\sigma}}$. The procedure works as follows. Let $A:TT^*N\to TT^*N$ be the unique endomorphism for which 
$$
\omega_{\sigma}(\cdot,\cdot)=g_{J_0,\omega}(A\cdot,\cdot).
$$ 
Since $\omega(\cdot,\cdot)=g_{J_0,\omega}(-J_0\cdot,\cdot)$ and since $\|\pi^*\sigma\|^{g_{J_0,\omega}}_{\infty}\ll\delta$ we have $$A=-(J_0+\epsilon)$$ for some $\epsilon:TT^*N\to TT^*N$ satisfying $\|\epsilon\|^{g_{J_0,\omega}}_{\infty}\ll\delta$. The automorphism $-A^2$ is positive definite, so there is a unique positive definite automorphism $Q:TN\to TN$ such that $Q^2=-A^2$. Take $J_{\sigma}:=A^{-1}Q$. Then it is shown in the proof of \cite[Proposition 2.5.6]{McDuffSalamon2017} that $J_{\sigma}$ is $\omega_{\sigma}$-compatible. 

By construction, it is clear that $$\|J_{\sigma}-J_0\|^{g_{J_0,\omega}}_{T^*N\setminus D^*_RM}=O(\|\pi^*\sigma\|_{T^*N\setminus D^*_RM}).$$ From this and \ref{sCitD} it follows that $J_{\sigma}$ is almost conical. We have that $g_{J_\sigma,\omega_\sigma}$ is everywhere $(1+\delta)$-equivalent to $g_{J,\omega}$ by the fact that $\omega_{\sigma}$ and $J_{\sigma}$ are everywhere $\delta$-close to $\omega$ and $J_0$ respectively. Therefore we have $ \| \pi^* \sigma \|_{ \infty} < \delta$ with respect to $g_{J_\sigma,\omega_\sigma}$ as well.
\end{proof}

\subsection{Radial Hamiltonians}
\label{sec:radial}
Given a smooth symmetric function $h \colon  \R \to \R$, we define a Hamiltonian $H= H_{h,g} \colon  T^*N \to \R$ by
\begin{equation}
\label{eq:radial_hamiltonian}
  H(q,p) = h( | p |).
\end{equation}
Fix $\xi \in T_{(q,p)}T^*N$ and write $\xi = \dot x (0)$ for a curve $x(t) = (q(t),p(t))$. Then
\begin{align*}
dH_{(q,p)}(\xi) & = \frac{\partial }{\partial t}\Big|_{t=0} h( |p(t)|) \\
& = h'(|p|) \frac{\partial }{\partial t}\Big|_{t=0} \sqrt{ \left\langle p(t), p(t) \right\rangle_{q(t)}} \\
& = \frac{ h'(|p|)}{2|p|} \frac{\partial }{\partial t}\Big|_{t=0}  \left\langle p(t), p(t) \right\rangle_{q(t)} \\
& =  \frac{ h'(|p|)}{|p|} \left\langle \nabla_t p(t), p(t) \right\rangle \Big|_{t=0} \\
& = \frac{ h'(|p|)}{|p|} \left\langle \xi^p, p \right\rangle.
\end{align*}
From \eqref{eq:omega_in_splitting} and this computation we see that if we write $X_H \cong (X_H^q, X_H^p)$ then
\[
  X_H^q(q,p) = \frac{ h'(|p|)}{|p|} p, \qquad X_H^p(q,p) = 0.
\]
In the magnetic case, if  $X_{H,\sigma}$ denotes the vector field dual to $dH$ under $\omega_{\sigma}$ then writing $X_{H,\sigma} \cong ( X_{H, \sigma}^q, X_{H,\sigma}^p)$ we have
\begin{equation}
\label{eq:X_H_sigma}
X_{H,\sigma}^q(q,p)= \frac{ h'(|p|)}{|p|} p, \qquad X_{H,\sigma}^p(q,p) = \frac{ h'(|p|)}{|p|}  F_q(p).
\end{equation}
We denote by
\[
  \mathcal{P}(H , \sigma) := \left\{ x \colon S^1 \to T^*N \mid \dot x (t) = X_{H,\sigma}(x(t)) \right\}
\]
the set of 1-periodic orbits of $X_{H,\sigma}$. For the special case $h(r) = \frac{1}{2}r^2$, these are called \textit{magnetic geodesics} (since in this case $H$ agrees with the Hamiltonian $E_g$ from \eqref{eq:G-Ham}).  It follows from \eqref{eq:X_H_sigma} that a loop $x(t) = (q(t),p(t))$ belongs to $\mathcal{P}=\mathcal{P}(H,\sigma)$ if and only if
\[
  \dot q = \frac{h'(|p |)}{| p|} p, \qquad \nabla_t p =  \frac{h'(| p|)}{| p|} F_q (p).
\]
Since $F$ is anti-symmetric, one sees that if $x= (q,p) \in \mathcal{P}$ then $| p(t)|$ is constant, and hence so is $ | \dot q |$. Thus given $x= (q,p) \in \mathcal{P}$ there exists $e,r \ge 0$ such that $ | \dot q | = e$ and $ |p| = r$, with $ h'(r) = e$. Thus
\[
  x = (q,p) \in \mathcal{P}(H,\sigma) \qquad \Leftrightarrow \qquad p = \frac{r}{e} \dot q , \quad \nabla_t \dot q = \frac{e}{r}F_q (\dot q ).
 \]
Given such a $x$, let $\gamma(t) := q(t/e)$ so that  $\dot \gamma \in SN$, the unit tangent bundle. Then $\gamma$ is $e$-periodic and satisfies
\[
  \nabla_t \dot \gamma = \frac{1}{r} F_{\gamma} (\dot \gamma).
\]
\begin{lem}
\label{lem:slope_not_period_closed_geodesic}
Suppose $h (r) =a r + b$ and suppose that $a$ is not the period of a closed geodesic of $g$. Then there exists $c > 0$ such that every element $x = (q,p)$ of $\mathcal{P}(H_{h, g}, \sigma)$ satisfies $|p | \le c$.
\end{lem}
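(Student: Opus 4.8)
The plan is to argue by contradiction: if $\mathcal{P}(H_{h,g},\sigma)$ contained orbits with $|p|$ arbitrarily large, then rescaling those orbits to unit speed and letting $|p|\to\infty$ would produce a non-constant closed geodesic of period exactly $a$, contradicting the hypothesis. The mechanism is that, by \eqref{eq:X_H_sigma}, along such an orbit the magnetic (Lorentz) term in the equation for the projected curve is of order $1/|p|$ relative to the geodesic term, hence becomes negligible in this limit.

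First I would carry out the reduction already set up before the lemma. Since $h(r)=ar+b$ we have $h'\equiv a$; I would assume $a>0$. Supposing the conclusion fails, pick orbits $x_n=(q_n,p_n)\in\mathcal{P}(H_{h,g},\sigma)$ with $r_n:=|p_n|\to\infty$ (recall $|p_n|$ is constant along each orbit), and set $\gamma_n(t):=q_n(t/a)$. Then $\gamma_n$ is $a$-periodic, satisfies $|\dot\gamma_n|\equiv 1$, and solves
\[
\nabla_t\dot\gamma_n=\tfrac1{r_n}F_{\gamma_n}(\dot\gamma_n).
\]

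Next I would read this as an ODE on the unit tangent bundle $SN$, which is compact since $N$ is closed: the curve $c_n:=(\gamma_n,\dot\gamma_n)$ is an $a$-periodic integral curve of the vector field $Y_n:=Y_\infty+\tfrac1{r_n}Z$ on $SN$, where $Y_\infty$ is the geodesic spray and $Z$ is the vertical vector field on $SN$ determined by the Lorentz force $F$ (well-defined on $SN$ because $\langle F_q(v),v\rangle=\sigma_q(v,v)=0$, and bounded there by compactness). Hence $Y_n\to Y_\infty$ in $C^\infty(SN)$, and compactness of $SN$ also lets me pass to a subsequence along which $c_n(0)\to v_\infty\in SN$. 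By continuous dependence of solutions of ordinary differential equations on the equation and on the initial data, $c_n\to c_\infty$ uniformly on $[0,a]$, where $c_\infty=(\gamma_\infty,\dot\gamma_\infty)$ is the integral curve of the geodesic spray through $v_\infty$, so $\gamma_\infty$ is a geodesic of $g$. Passing to the limit in $c_n(a)=c_n(0)$ gives $c_\infty(a)=c_\infty(0)$, so $\gamma_\infty\colon\R/a\Z\to N$ is a closed geodesic, and it is non-constant because $|\dot\gamma_\infty|\equiv 1$. Thus $a$ is the period of a closed geodesic of $g$, contradicting the hypothesis, and the lemma follows.

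The only genuinely non-trivial step is the compactness argument producing the limiting geodesic $\gamma_\infty$: this is where closedness of $N$ is used essentially, both to bound the Lorentz term uniformly in $n$ and to extract a convergent subsequence of initial conditions from $SN$, and it is what keeps the rescaled orbits from degenerating or escaping to infinity. Everything else — the reparametrisation, the $1/|p|$ estimate on the magnetic term, and the passage to the limit in the periodicity condition — is routine bookkeeping. One should also remember to exclude the degenerate slope $a=0$, for which $H$ is constant and every point is a periodic orbit.
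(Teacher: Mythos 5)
Your proof is correct and follows essentially the same route as the paper: argue by contradiction, extract a sequence $r_n\to\infty$ of unit-speed $a$-periodic curves solving $\nabla_t\dot\gamma_n=\tfrac1{r_n}F_{\gamma_n}(\dot\gamma_n)$, and pass to the limit to produce an $a$-periodic closed geodesic. The paper compresses the limiting step into one sentence, whereas you spell out the compactness of $SN$ and continuous dependence on the ODE; your remark about excluding $a=0$ is a reasonable (implicit in the paper) caveat.
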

\begin{proof}
If not we obtain a sequence $r_k \to \infty$ and a sequence $\gamma_k $ of unit speed curves, all of which are $a$-periodic, which satisfy $\nabla_t \dot \gamma_k = \frac{1}{r_k}F_{\gamma_k} (\dot \gamma_k)$. Since $\gamma_k$ is contained in a level set of $H$ which is proper and $\gamma_k$ is of unit speed, this equation produces estimates on derivatives of $\gamma_k$ to all orders. Thus by Arzela-Ascoli we obtain a subsequence converging in $C^{\infty}$ to an $a$-periodic unit length curve $\gamma$ satisfying $\nabla_t \dot \gamma = 0$. \end{proof}
In practice, we will typically work with Hamiltonians which take the form \eqref{eq:radial_hamiltonian} outside of some compact set of $T^*N$ with $h$ linear outside of a compact set. Of course, for such Hamiltonians, Lemma \ref{lem:slope_not_period_closed_geodesic} is still valid. The next lemma compares $X_H$ to $X_{H, \sigma}$.
\begin{lem}\label{lmMagPertbDec}
Suppose $H$ is linear at infinity. Let $\sigma$ be a magnetic form and denote by $X_{H,\sigma}$ the Hamiltonian vector field of $H$ with respect to $\omega_{\sigma}$. Then with respect to any roughly conical almost complex structure we have
\[
\|X_H-X_{H,\sigma}\|_{T^*N\setminus D^*_RM}\to0
\]
in $C^\infty$ as $R\to\infty$.
\end{lem}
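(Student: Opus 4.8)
The plan is to compute the difference $X_H - X_{H,\sigma}$ explicitly in the splitting \eqref{eq:splitting} and then measure its norm with respect to the conical metric $g_{J,\omega}$, using the scaling behaviour recorded in items (3) and (4) of the list in \S\ref{SeqACS}. Since $H$ is linear at infinity, on $T^*N \setminus D^*_R N$ (for $R$ large) we have $H(q,p) = h(|p|)$ with $h(r) = ar+b$, so $h'(|p|)/|p| = a/|p|$. Comparing the formulas for $X_H$ (the case $\sigma = 0$) and $X_{H,\sigma}$ from \eqref{eq:X_H_sigma}, the horizontal components agree, and
\[
(X_H - X_{H,\sigma}) \cong \bigl(0,\ -\tfrac{h'(|p|)}{|p|} F_q(p)\bigr) = \bigl(0,\ -\tfrac{a}{|p|}F_q(p)\bigr),
\]
a purely vertical vector. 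So the task reduces to bounding the $g_{J,\omega}$-norm of this vertical vector field and showing it, together with all its covariant derivatives, tends to $0$ as $R \to \infty$.

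Next I would estimate the relevant norms. The Lorentz force $F$ is a fixed bundle map determined by $\sigma$ and $g$ on the compact base $N$; pulled back to $T^*N$ it is bounded (in $C^\infty$) independent of the fibre coordinate, and $F_q(p)$ has Sasaki-norm comparable to $|p|$. Hence $\tfrac{a}{|p|}F_q(p)$ has Sasaki-norm $O(1)$ as $|p| \to \infty$. Now I translate this into the conical metric: by item (3), a vector in $\ker dt$ at radial coordinate $r$ has $g_{J,\omega}$-norm equal to $e^{r/2}$ times a reference norm, while the fibrewise (vertical) directions, which are the ``$p$'' directions, scale so that $\tfrac{a}{|p|}F_q(p)$ — whose Sasaki length is bounded — corresponds after the rescaling $t^2 = 4e^r$, $|p| = e^r$, to something whose $g_{J,\omega}$-norm decays like $|p|^{-1/2} \sim t^{-1}$. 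More directly: item (4) with $m=2$ says $|\pi^*\sigma|_{g_{J,\omega}}$ decays like $e^{-r}$, i.e. like $|p|^{-1}$, and this controls $F$; combining with the linear growth of translation-invariant vector fields (item (3)) gives that the vertical vector field $X_H - X_{H,\sigma}$ has $g_{J,\omega}$-norm decaying like $|p|^{-1/2}$, uniformly on $S^*N$, hence $\to 0$ as $R \to \infty$. For a roughly conical $J_t$ (item (6)), $g_{J_t,\omega_{t\sigma}}$ is quasi-isometric to $g_{J,\omega}$, so the same conclusion holds.

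Finally, to upgrade convergence to $C^\infty$ rather than $C^0$: since $J$ is translation-invariant on $P^*N$ and the metric there is an exact cone $dt^2 + t^2 g_S$ over the compact manifold $(S^*N, g_S)$, covariant derivatives in the radial ($t$) direction only improve the decay, and covariant derivatives in the $S^*N$-directions act on quantities pulled back from the compact base (the components of $F$) or from $S^*N$, so they are bounded; carrying the $|p|^{-1/2}$-type factor through, every covariant derivative $\nabla^k(X_H - X_{H,\sigma})$ still decays as $|p| \to \infty$. I expect the only real subtlety — the ``hard part'' — to be the bookkeeping of how the rescalings of items (3) and (4) interact: one must be careful to express $F_q(p)$ in the translation-invariant frame in which $J$ and the cone metric are standard, and track the exact power of $e^{r/2}$ picked up, to confirm the net exponent is negative. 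Once the scaling is set up correctly the decay is automatic.
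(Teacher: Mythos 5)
Your proposal is correct and follows essentially the same route as the paper: both identify the difference $X_H-X_{H,\sigma}$ with the purely vertical term $\tfrac{h'(|p|)}{|p|}F_q(p)$, observe its coefficients are bounded (pulled back from the compact base), and conclude decay from the fact that vertical directions shrink like $|p|^{-1/2}$ in the conical metric (the paper phrases this as $\partial_{p_i}$ being $\omega_0$-dual to the scaling-invariant $\partial_{q_i}$, you phrase it via the scaling items (3)--(4), which is the same computation). Your extra remarks on $C^\infty$ convergence and roughly conical $J$ are consistent with, and no less careful than, the paper's own brief treatment.
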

\begin{proof}
  Choosing local coordinates $\{q_i\}$ on the zero section, and considering equation \eqref{eq:X_H_sigma} we see that
  \[
  X_H-X_{H,\sigma}=X_{H,\sigma}^p(q,p) = h'(|p|)  F_q\left(\frac{p}{|p|}\right).
  \]
  Note  that the coefficients of the Lorentz force in the coordinates $\{q_i,p_i\}$  depend only on the $q_i$. Applying it to the unit vector in the direction of $p$ we get  a linear combination of the vectors $\partial_{p_i}$ with uniformly bounded coefficients. With respect to a conical almost complex structure, the vectors $\partial_{p_i}$ are decaying since they are $\omega_0$-dual to the scaling invariant fields $\partial_{q_i}$.
\end{proof}

\subsection{Novikov rings}
\label{subsection:novikov}
In this paper we consider two flavours of Novikov rings. In this section we briefly explain how they are defined. Fix once and for all a field $\mathbb{K}$.

\begin{df}
The \textbf{universal Novikov ring} $\Lambda_{\operatorname{univ}}$ is defined as the set of all formal sums 
$$
\sum_{ t \in \mathbb{R}} r_t \,T^t
$$
where $T$ is a formal parameter, $t$ is a real number and $r_t \in \mathbb{K}$. These sums are required to satisfy the \textit{Novikov finiteness condition}:  
\vspace{0.75em}

\begin{displayquote}
For all $c >0$ the set $\left\{ t \in \mathbb{R} \mid r_t \ne 0, \ t < c \right\}$ is finite.
\end{displayquote}
\end{df}

The universal Novikov ring is a field. The second flavour of Novikov ring starts with a finitely generated torsion-free abelian group $\Gamma$. This construction also appeared in recent work of Zhang \cite{Zhang2016}. Let $H^1( \Gamma ; \mathbb{R}) = \mathrm{Hom}( \Gamma ; \mathbb{R})$, and suppose $ \Phi \subset H^1( \Gamma ; \mathbb{R})$ is a rational polytope (i.e. the convex hull of finitely many points). Assume to begin with that 
\begin{equation}
\label{eq-no-common-kernel}
\bigcap_{ \phi \in \Phi} \ker \phi = \emptyset.
\end{equation}

\begin{df}
Let $\mathbb{L}$ denote (another) field. We define the \textbf{Novikov ring with multi-finiteness conditions}, written $\Lambda_{\Phi}(\mathbb{L})$, to be the set of all formal sums
$$
\sum_{ \gamma \in \Gamma} s_{ \gamma}\, z^{ \gamma}
$$
where $z$ is another formal variable, and this time the coefficients $s_\gamma \in \mathbb{L}$ satisfy
\vspace{0.75em}
\begin{displayquote}
For all  $c >0$  and $\phi \in \Phi$  the set $\left\{ \gamma \in \Gamma \mid s_{ \gamma} \ne 0, \ \phi(\gamma) < c \right\}$ is finite.
\end{displayquote}
\end{df}
Thus by definition
$$
\Lambda_{ \Phi}(\mathbb{L}) = \bigcap_{ \phi \in \Phi} \Lambda_{ \phi}(\mathbb{L}),
$$
where by a slight abuse of notation we write $\Lambda_{ \phi}(\mathbb{L})$ for $\Lambda_{ \{ \phi \}}(\mathbb{L})$.
It follows from an argument analogous to \cite[Lemma 7.4]{Zhang2016} that if $ \Phi$ has vertices $ \phi_1, \dots , \phi_m$ then
\begin{equation}\label{eq_Zhang_vertices}
\Lambda_{ \Phi}(\mathbb{L})= \bigcap_{i=1}^m \Lambda_{ \phi_i}(\mathbb{L}).
\end{equation}
This implies that $ \Lambda_{ \Phi}(\mathbb{L})$ is  an integral domain. If \eqref{eq-no-common-kernel} is not satisfied, we simply replace $\Gamma$ by $\bar{\Gamma} = \Gamma/K$, where $K$ is the common kernel, and consider the induced polytope $\bar{\Phi}$ in $H^1(\bar{\Gamma};\mathbb{R})$. In order to keep the notation under control, we continue to write $\Lambda_{\Phi}(\mathbb{L})$ instead of $\Lambda_{\bar{\Phi}}(\mathbb{L})$. 

In fact, in this paper, we will always take $\mathbb{L} = \Lambda_{\operatorname{univ}}$. Thus we abbreviate
\begin{equation}
\label{eq-final-nov}
\Lambda_\Phi \coloneqq \Lambda_\Phi( \Lambda_{\operatorname{univ}}).
\end{equation}

\subsection{Loop spaces and transgressions}
\label{sec-lst}

Let
 $$ \mathcal{L}N = \bigoplus_{ a \in [S^1, M]} \mathcal{L}^aN$$
 denote the free loop space of $N$, and let $\mathcal{L}T^*N$ denote the free loop space of $ T^*N$. Fix for each $a\in[S^1,N]=[S^1,T^*N]$ a reference loop $\gamma_a=(q_a,p_a)\in \mathcal{L}^aT^*N$. We require that $\gamma_{0}$ is a constant loop and that $\gamma_{-a}(t)=\gamma_a(-t)$.
 The loop $\gamma_a$ (resp. $q_a$) will always be taken as the basepoint when referring to $ \pi_1(\mathcal{L}^aT^*N)$ (resp. $ \pi_1(\mathcal{L}^aN)$).

Define the \textit{transgression map}
$$
\tau \colon H^2(N; \mathbb{R}) \to H^1( \mathcal{L}N; \mathbb{R})
$$
to be the composition
$$
H^2(N ; \mathbb{R}) \to H^1(\mathcal{L}N \times S^1 ; \mathbb{R}) \to H^1( \mathcal{L}N ; \mathbb{R}),
$$
where the first map is induced by the evaluation $ \mathcal{L}N \times S^1 \to N$, $(q,t) \mapsto q(t)$, and the second projects onto the K\"unneth summand.

Explicitly, if $ k \in H^2(N ; \mathbb{R})$ and $ \sigma$ is a closed 2-form with $ [ \sigma ]  = k$, then the transgression class $\tau(k)$ is represented by the singular 1-cochain
$$
\tau(k) \in C^1_{\operatorname{sing}}(\mathcal{L}N; \mathbb{R}), \qquad u \mapsto \int u^*\sigma,
$$
for $u$ a smooth\footnote{For an arbitrary path $u$, first homotope $u$ relative to its ends to make it smooth. The resulting integral is independent of this homotopy since $\sigma$ is closed.} path in $\mathcal{L}N$. One can heuristically interpret $\tau(k)$ as being represented by the ``one-form'' $\tau(\sigma) \in \Omega^1(\mathcal{L}N)$:
\begin{equation}
  \label{eq:transgression}
   \tau( \sigma)_q(v) := \int_{S^1}\sigma( \dot q, v), \qquad  q \in \mathcal{L}N,\  v \in C^{ \infty}( q^*TN).
\end{equation}
Let $i_a \colon \mathcal{L}^aN \to \mathcal{L}N$ denote the inclusion. A two-form $ \sigma$ is weakly exact (i.e. it lifts to an exact form on the universal cover of $N$) if and only if $  i_0^*\tau [ \sigma] = 0$, i.e. if the 1-form $\tau( \sigma)$ is exact when restricted to the space of contractible loops.

\subsection{Covers of the loop space}
\label{sec:covers}
As before, let $\sigma$ denote a closed two-form on $N$. We define a map
$$
I_{\sigma} \colon \pi_1( \mathcal{L} N) \to \mathbb{R}
$$
by
$$
I_{\sigma}[w] := \int_{\mathbb{T}^2}w^*\sigma
$$
for $w \colon S^1 \to \mathcal{L} N$ a smooth representative of  $[w] \in \pi_1( \mathcal{L} N)$, thought of as a map $w \colon \mathbb{T}^2 \to N$.

Let $\mathcal{L}_{ \sigma} N  \to \mathcal{L} N$ denote the covering space of $\mathcal{L} N$ with deck transformation group $\pi_1( \mathcal{L} N) \big/ \ker I_{ \sigma}$. An element of $\mathcal{L}_{ \sigma} N$ is an equivalence class $[q,w]$ of a pair $(q,w)$, where $q \in \mathcal{L} N$ and $w \colon [0,1] \to \mathcal{L}N$ is a path from $q$ to $q_a$ (where $ q$ belongs to the free homotopy class $a$ and $q_a$ is our earlier fixed reference loop) and the equivalence relation is given by
$$
(q,w_{0})\sim(q,w_{1}) \qquad \Leftrightarrow \qquad [w_{0} * w_{1}^{-1}]\in\ker(I_{ \sigma}),
$$
By definition of the cover $\mathcal{L}_\sigma N$, the
``1-form'' $\tau(\sigma)$ pulls back to an exact form in $\mathcal{L}_\sigma N$.

We remark that the cover $ \mathcal{L}_{ \sigma} N$ (and all the related objects) only depend on the half-line in $H^2(  N ; \mathbb{R})$ determined by $ [\sigma]$.

There is an analogous cover $ \mathcal{L}^a_{ \sigma}T^*N$ of the free loop space of the cotangent bundle of $N$, where we use the reference loops $\gamma_a$. This makes the following diagram commute:
$$
\begin{tikzcd}
\mathcal{L}_{ \sigma}T^*N \ar[r] \ar[d] & \mathcal{L}T^*N \ar[d] \\
\mathcal{L}_{ \sigma}N \ar[r] & \mathcal{L}N
\end{tikzcd}
$$
The reader may supply the details.

If we define $\mathcal{A}_{\sigma} \colon \mathcal{L}_{\sigma} N \rightarrow \mathbb{R}$
by
$$
\mathcal{A}_{\sigma}([q,w]) \coloneqq \int_{[0,1]\times S^{1}}w^{*}\sigma,
$$
then the pullback of $ \tau( \sigma)$ to $\mathcal{L}_{\sigma} N$ is formally equal to $d A_{ \sigma}$.

Moreover since $ \omega_{ \sigma} = -d \lambda + \pi^* \sigma$ belongs to the same cohomology class as $ \pi^* \sigma$, the cover $ \mathcal{L}_{ \sigma}T^*N$ of $ \mathcal{L}T^*N$ can also be thought as the cover corresponding to the kernel of $I_{ \omega_{ \sigma}}$.

\subsection{Homology with local coefficients}\label{SecCoefficients}

Next, we use $ \sigma$ to build a system of local coefficients on $ \mathcal{L}N$. Let us briefly recall the relevant definitions. We recommend \cite[Chapter 6]{Whitehead1978} for more information.

Suppose $X$ is a path connected topological space admitting a universal cover, and let $R$ be a commutative ring. A \textit{local system of $R$-modules} on $X$ is a functor $ \mathscr{C}$ from the fundamental groupoid of $X$ to the category of $R$-modules. Thus to each point $x \in X$, $ \mathscr{C}$ associates an $R$-module $ \mathscr{C}(x)$, and if $u$ is a path in $X$ from $x$ to $y$ then $ \mathscr{C}$ associates to $u$ an isomorphism
$$
\mathscr{C}(u) \colon \mathscr{C}(x) \to \mathscr{C}(y),
$$
such that $ \mathscr{C}(u)$ only depends on the path homotopy class $[u]$, and such that $ \mathscr{C}(u*v) = \mathscr{C}(v)\circ \mathscr{C}(u)$ (here $*$ denotes concatenation of paths), and finally such that $ \mathscr{C}( \mathrm{ct})$ is the identity map for any constant path $ \mathrm{ct}$. Rather than give the (somewhat lengthy) definition of how the \textit{homology of $X$ with local coefficients in $ \mathscr{C}$}, written $H_{ *}(X ; \mathscr{C})$, is defined, we simply remark that Eilenberg's Theorem (see for example \cite[Theorem VI.3.4]{Whitehead1978}) allows us to identify they groups as the equivariant homology groups of $ \widetilde{X}$ with respect to $ \mathscr{C}$:
$$
H_{ *}(X ; \mathscr{C}) = H_{ *}\left(  C_{ *}^{ \mathrm{sing}}( \widetilde{X}) \otimes_{ \mathbb{Z}[ \pi_1(X)]} R\right).
$$
We will prove a Floer-theoretic version of Eilenberg's Theorem in Proposition \ref{prEilenberg} below.
Going back to the previous setting, any class $ b \in H^1 ( \mathcal{L} N ; \mathbb{R})$ defines a local system of $ \Lambda_{ \mathrm{univ}}$-modules on $ \mathcal{L} N$. This works as follows: Pick a singular 1-cocycle representing the given class $b$. Given a path $u$ in $ \mathcal{L} N$ from say, $q_0(t)$ to $q_1(t)$, the desired isomorphism is
$$
\mathscr{C}_{ \beta}(u) \colon \Lambda_{ \mathrm{univ}}(q_0) \to \Lambda_{ \mathrm{univ}}(q_1), \qquad z \mapsto e^{-\int \beta[u]} z.
$$

This definition depends on a choice of closed cochain $\beta$ used to represent $b$. However different choices give rise to isomorphic local systems (which thus have isomorphic homology).

As with the previous subsection, we are solely interested in the case where $b = \tau [ \sigma]$ is the transgression of a closed 2-form $ \sigma \in \Omega^2(N)$ (and then we take $ \beta = \tau( \sigma)$).

The following result is not explicitly stated in \cite{AlbersFrauenfelderOancea2017}. It extends a previous result of Ritter \cite[Theorem 12]{Ritter2009}.

\begin{thm}[Albers, Frauenfelder and Oancea]
\label{thm:afo2}
Let $ \sigma \in \Omega^2(N)$. If $ \sigma$ is not weakly exact then the twisted loop space homology vanishes:
$$
H_{*}( \mathcal{L}N ; \mathscr{C}_{ \tau( \sigma)}) = 0.
$$
\end{thm}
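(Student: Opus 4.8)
The plan is to prove the statement one free homotopy class at a time, using the evaluation fibration of the loop space together with the fact that a non-trivial rank-one local system over a field has vanishing homology on a connected $H$-space. Concretely, for each $a\in[S^1,N]$ evaluation at the basepoint is a Serre fibration $\mathrm{ev}\colon\mathcal{L}^aN\to N$ whose fibre over a point is a disjoint union of path components of the based loop space $\Omega N$, each homotopy equivalent to the identity component $\Omega^0N$, which is a connected $H$-space. The Serre spectral sequence with coefficients in $\mathcal{C}_{\tau(\sigma)}$ reads $E^2_{p,q}=H_p(N;\mathcal{H}_q)\Rightarrow H_{p+q}(\mathcal{L}^aN;\mathcal{C}_{\tau(\sigma)})$, where $\mathcal{H}_q$ is the local system on $N$ whose stalk is the $q$-th homology of a fibre with the restricted coefficients. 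Thus it suffices to show that $\mathcal{C}_{\tau(\sigma)}$ restricts to a \emph{non-trivial} local system on every path component of every fibre; then $\mathcal{H}_q=0$ for all $q$, hence $H_*(\mathcal{L}^aN;\mathcal{C}_{\tau(\sigma)})=0$, and summing over $a$ gives $H_*(\mathcal{L}N;\mathcal{C}_{\tau(\sigma)})=0$.

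The $H$-space input I would isolate as a lemma: if $G$ is a connected $H$-space and $\alpha\colon\pi_1(G)\to k^\times$ is a non-trivial homomorphism into the units of a field $k$, then $H_*(G;k_\alpha)=0$ for the associated rank-one local system $k_\alpha$. To see this, pick a loop $\gamma$ based at the unit with $\alpha[\gamma]=\lambda\ne1$ and set $F\colon G\times S^1\to G$, $F(g,t)=\gamma(t)\cdot g$; since $G$ is an $H$-space, $F_*(b,n)=b+n[\gamma]$ on fundamental groups, so $F^*k_\alpha\cong k_\alpha\boxtimes k_\lambda$ with $k_\lambda$ the monodromy-$\lambda$ system on $S^1$. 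By Künneth over a field, $H_*(G\times S^1;F^*k_\alpha)\cong H_*(G;k_\alpha)\otimes_k H_*(S^1;k_\lambda)$, and $H_*(S^1;k_\lambda)=0$ because $1-\lambda$ is invertible. As $F$ restricted to $G\times\{\ast\}$ is homotopic to $\mathrm{id}_G$, the group $H_*(G;k_\alpha)$ is a retract of $H_*(G\times S^1;F^*k_\alpha)=0$, hence trivial. In our situation $k=\Lambda^{\mathbb{K}}_{\mathrm{univ}}$ is a field by Remark \ref{rem:univeral}.

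It remains to see that $\mathcal{C}_{\tau(\sigma)}$ is non-trivial on each fibre component, and this is where the hypothesis enters. A loop in a component $\Omega^b_{q_0}N$ of a fibre is a map $w\colon\mathbb{T}^2\to N$ that is constant along one circle factor, hence descends to a map $S^2\to N$; by the transgression formula $I_\sigma[w]=\int_{\mathbb{T}^2}w^*\sigma$ the monodromy of $\mathcal{C}_{\tau(\sigma)}$ along it equals $e^{-\langle[\sigma],[s]\rangle}\in\Lambda^{\mathbb{K}}_{\mathrm{univ}}$ for the corresponding $[s]\in\pi_2(N)$. So I must show that $\sigma$ not weakly exact forces $\langle[\sigma],[s]\rangle\ne0$ for some $[s]\in\pi_2(N)$: writing $p\colon\widetilde N\to N$ for the universal cover, weak exactness of $\sigma$ is the statement that $[p^*\sigma]=0$ in $H^2(\widetilde N;\mathbb{R})$, and since $\widetilde N$ is simply connected the Hurewicz theorem identifies $H_2(\widetilde N;\mathbb{R})$ with $\pi_2(\widetilde N)\otimes\mathbb{R}=\pi_2(N)\otimes\mathbb{R}$, so if $\sigma$ is not weakly exact then $[p^*\sigma]$, hence $[\sigma]$, pairs non-trivially with a spherical class. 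Since $\mathbb{K}$ is a field, $e^c\ne1$ in $\Lambda^{\mathbb{K}}_{\mathrm{univ}}$ whenever $c\ne0$, so this monodromy is non-trivial, completing the argument.

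The step I expect to be most delicate is the last one: carefully matching $\pi_1$ of a possibly non-contractible component $\Omega^b_{q_0}N$ with $\pi_2(N)$ so that the monodromy is computed by the spherical period of $\sigma$ (concatenation with a fixed loop of class $b$ does not change the relevant integral, which makes this identification work), and verifying that this identification and the ensuing non-triviality hold uniformly across all components of all fibres and all free homotopy classes $a$. The remaining ingredients — the $H$-space lemma, the evaluation fibration $\mathcal{L}^aN\to N$, and the Serre spectral sequence with local coefficients — are standard.
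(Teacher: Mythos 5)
Your argument is correct, but it takes a genuinely different route from the paper at two points. The paper treats only the component of contractible loops: it quotes the Albers--Frauenfelder--Oancea vanishing $H_{*}(\Omega^0N ; i^*\mathcal{C}_{\tau(\sigma)})=0$ as a black box, feeds it into the Serre spectral sequence of $\Omega^0N\hookrightarrow\mathcal{L}^0N\to N$ to kill $H_{*}(\mathcal{L}^0N;\mathcal{C}_{\tau(\sigma)})$, and then disposes of all non-contractible components at one stroke using the unital ring structure on twisted loop space homology (Ritter): the unit lives in the contractible component, so its vanishing kills the whole ring. You instead (i) reprove the based-loop vanishing from scratch via the $H$-space argument $F(g,t)=\gamma(t)\cdot g$, the K\"unneth theorem over the field $\Lambda_{\mathrm{univ}}$ and $H_*(S^1;k_\lambda)=0$ for $\lambda\ne1$ --- this is essentially the content of the cited AFO result, and your retraction proof of it is correct, since the identity of $H_*(G;k_\alpha)$ factors through the vanishing group $H_*(G\times S^1;F^*k_\alpha)$ --- and (ii) handle each free homotopy class $a$ separately through the spectral sequence of $\mathrm{ev}\colon\mathcal{L}^aN\to N$, checking that $\mathcal{C}_{\tau(\sigma)}$ is non-trivial on every fibre component $\Omega^b_{q_0}N$; your concatenation remark is exactly the right mechanism, since splicing a sphere of non-zero $\sigma$-area onto the base loop $\beta$ gives a loop in $\Omega^b_{q_0}N$ with monodromy $e^{-\langle[\sigma],[s]\rangle}\ne1$, and non-weak-exactness produces such a sphere via Hurewicz on the universal cover. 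Your route is more self-contained (no appeal to the loop product structure, and no black-boxed citation) at the cost of redoing the AFO lemma and some bookkeeping over fibre components; the paper's route is shorter given the cited unital ring structure. One small imprecision: a loop in $\Omega^b_{q_0}N$ is a torus constant along one circle factor, and collapsing that circle yields $S^2$ with two points identified (homotopy equivalent to $S^2\vee S^1$), not $S^2$; since $\sigma$ is a $2$-form this does not affect the period computation, and in any case the implication you actually need --- from a $\pi_2$-class of non-zero period to a loop with non-trivial monodromy in each component --- is the one your concatenation identification supplies.
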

\begin{proof}
Consider the fibration
\[
\Omega^0N \xhookrightarrow{i} \mathcal{L}^0N \xrightarrow{e} N,
\]
where $ \Omega^0N$ is the component of the based loop space containing the constant loops. By \cite[Corollary 1]{AlbersFrauenfelderOancea2017} one has
\[
H_{*}(\Omega^0N ; i^*\mathscr{C}_{ \tau( \sigma)}) = 0
\]
(here we are using the fact that $ \Lambda_{\operatorname{univ}}$ is a field). The Serre Spectral Sequence then tells us that one also has
\[
H_{*}(\mathcal{L}^0N ; i^*\mathscr{C}_{ \tau( \sigma)}) = 0
\]
(cf. \cite[Proposition 5]{AlbersFrauenfelderOancea2017}). To extend this to the entire loop space $ \mathcal{L}N$, one uses the fact that the twisted homology groups $ H_{ *}( \mathcal{L}N ; \mathscr{C}_{ \tau( \sigma)})$ carries a unital ring structure \cite[Theorem 14.3]{Ritter2013}. The unit lives in $ H_{ *}( \mathcal{L}^0 N ; \mathscr{C}_{ \tau( \sigma)})$ and thus vanishing of the homology of the contractible component implies vanishing of the entire loop space homology.
\end{proof}
For the remainder of this paper, we will abbreviate the cumbersome notation $ H_*( \mathcal{L}N ; \mathscr{C}_{ \tau( \sigma)})$ by $ H_*( \mathcal{L}N)_{ \tau( \sigma)}$. This is consistent with Theorem \ref{thm:main} from the Introduction.\section{Floer theory for dissipative Hamiltonians}
\label{sec:disspative}
\subsection{The action functional and its gradient flow}
\label{sec:begin_floer}
In this section we construct a Floer theory for \textit{dissipative} Hamiltonians on non-compact symplectic manifolds, following the approach developed by the first author in \cite{Groman2015}. Although in the present paper we are only concerned with magnetic cotangent bundles, we will work in a somewhat more general setting here. To this end, let $(M,\omega)$ be a non-compact symplectic manifold satisfying $c_1(M)=0$. As before we denote by $ \mathcal{L}M$ the free loop space of $M$.

As in Section \ref{sec:covers}, the two-form $ \omega$ determines a homomorphism
$$ I_{ \omega} \colon \pi_1( \mathcal{L}M) \to \mathbb{R},$$ and hence a cover $ \mathcal{L}_{ \omega}M \to \mathcal{L}M$, whose elements are pairs $( \gamma, w)$, where $w$ is a homotopy class of paths connecting $ \gamma$ to a fixed reference loop in the given free homotopy class containing $ \gamma$. 

For a smooth function $H:S^1\times M\to \R$ and for any $t\in S^1$ denote by $X_{H_t}$ the unique vector field satisfying $dH(t,\cdot)=\omega(X_{H_t},\cdot).$
Fixing base loops on the components of $ \mathcal{L}M$, define a functional $\mathcal{A}_H \colon \mathcal{L}M\to \R$ by
\begin{equation}\label{eqActionDef}
\mathcal{A}_{H}([\gamma,w]):=-\omega(w)-\int_{S^1}H(\gamma(t))dt.
\end{equation}

Denote by $\mathcal{P}(H)\subset \mathcal{L}M$ the set of $1$-periodic orbits of $X_H$. Denoting by
\[
\pi \colon \mathcal{L}_{ \omega}M \to \mathcal{L}M,
\]
the covering map, let
\[
\mathcal{P}_\omega(H):=\pi^{-1}(\mathcal{P}(H)).
\]
This is the same as the critical point set of $\mathcal{A}_H$. Each $\tilde{\gamma}\in \mathcal{P}_{ \omega}(H)$ carries an index $i_{\mathrm{RS}}(\tilde{\gamma})$ which is well defined up to a global shift for each homotopy class of loops. Namely, for each homotopy class $a$ fix  a trivialisation of $\gamma_a^*TM$. Then if $\tilde{\gamma}=(\gamma,w)$ trivialise $\gamma^*TM$ along $w$ by extending the trivialisation along $\gamma_a$ and take the Robin-Salomon index of $\gamma$ with respect to this trivialisation.

Given an $S^1$-parameterised family of almost complex structures $J_t$ on $M$, the gradient of $\mathcal{A}_{H}$ at $\gamma$ is the vector field
\[
\nabla\mathcal{A}_{H}(\gamma)(t):=J_t(\dot{\gamma}(t)-X_{H_t}(\gamma(t)))
\]
along $\gamma$. Note that the gradient field is independent of the choice of base paths.  A gradient trajectory is a path in (a covering of) the loop space whose tangent vector at each point is the negative gradient at that point. Explicitly a gradient trajectory is a map
\[
u:\R\times S^1\to M,
\]
satisfying Floer's equation
\begin{equation}\label{eqFloer}
\partial_su+J_t(\partial_tu-X_{H_t}\circ u)=0.
\end{equation}
We refer to such solutions as \textit{Floer trajectories}.

\subsection{Dissipative Floer data and a priori $C^0$ estimates}

\begin{df}\label{dfGoBound}
An $\omega$-compatible almost complex structure $J$ on $M$ is said to be \textit{uniformly isoperimetric} if the associated metric $g_J$ is complete and there are constants $\delta$ and $c$ such that any loop $\gamma$ satisfying $\ell(\gamma)<\delta$ can be filled by a disc $D$ satisfying $\mathrm{Area}(D)<c\ell(\gamma)^2.$ If $J$ is merely $\omega$-tame, we add the requirement that, for some $C>1$,
\[
\frac1C<\left|\frac{\omega(Jx,Jy)}{\omega(x,y)}\right|<C.
\]
\end{df}
\begin{rem}
According to \cite[\S4]{Sikorav94} the criterion is satisfied for $J$ compatible if $g_J$ is
\textit{geometrically bounded}, namely, if it has sectional curvature bounded from above and radius of injectivity bounded from below.
\end{rem}
\begin{rem}\label{remObs}
It is clear from the definition that if $g_{J_0}$ is uniformly isoperimetric and $g_{J_1}$ is equivalent to $g_{J_0}$ then $g_{J_1}$ is also uniformly isoperimetric.
\end{rem}
The significance of the condition comes from the following result, where for a $J$-holomorphic curve $ u \colon S \to M$ we denote by $E_J(u ; S)$ the energy $E_J(u ; S) := \int_S u^* \omega$. We omit both the $J$ and the $S$ when they are clear from the context.
\begin{lem}[\textbf{Monotonicity \cite[\S4]{Sikorav94}}]
\label{lem:monotonicity}
Suppose $g_J$ is $(\delta,c)$-isoperimetric at $p$. Then any $J$-holomorphic curve $u$ passing through $p$ and with boundary in $M\setminus B_{\delta}(p)$ satisfies
\[
E_J(u;u^{-1}(B_{\delta}(p)))\geq \frac{\delta^2}{2c}.
\]
\end{lem}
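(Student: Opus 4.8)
The plan is to run the classical monotonicity (mean value) argument for $J$-holomorphic curves, following \cite{AudinLafontaine1994}. First I would set up the relevant area function: for $\rho\in(0,\delta]$ put $a(\rho):=E_J(u;u^{-1}(B_\rho(p)))$, which is non-decreasing in $\rho$. Applying Sard's theorem to the $1$-Lipschitz function $r(z):=\mathrm{dist}_{g_J}(p,u(z))$ on the domain of $u$, for almost every $\rho$ the slice $u^{-1}(\partial B_\rho(p))=r^{-1}(\rho)$ is a finite disjoint union of embedded circles; write $\ell(\rho)$ for the total length, measured in the pullback metric $u^*g_J$, of $u$ restricted to this slice. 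Since $J$ tames $\omega$, for the $J$-holomorphic map $u$ the $2$-form $u^*\omega$ equals the area form of $u^*g_J$ (away from the discrete critical set of $u$, which is irrelevant for integrals), so $a(\rho)$ is literally the $u^*g_J$-area of $u^{-1}(B_\rho(p))$.

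Next I would establish the two inequalities that drive the argument. On one hand, the coarea formula for $r$ with respect to $u^*g_J$, together with $|\nabla r|_{u^*g_J}\le 1$ (as $r$ is $1$-Lipschitz on the domain), gives $a'(\rho)\ge\ell(\rho)$ for almost every $\rho$. On the other hand, whenever $\ell(\rho)<\delta$ each component $\gamma_{\rho,i}$ of the slice is a loop of length $<\delta$, hence by the isoperimetric hypothesis at $p$ bounds a disc $D_i$ with $\mathrm{Area}(D_i)<c\,\ell(\gamma_{\rho,i})^2$; since $\omega$ is closed and $u^{-1}(B_\rho(p))$ and $\bigsqcup_i D_i$ share the boundary $\bigsqcup_i\gamma_{\rho,i}$, Stokes' theorem — applied after localising to a ball around $p$ small enough that the relevant $2$-cycle is $\omega$-exact — yields $a(\rho)=\int u^*\omega=\int_{\bigsqcup_i D_i}\omega\le C\sum_i\mathrm{Area}(D_i)<Cc\,\ell(\rho)^2$, where $C$ is the constant of Definition \ref{dfGoBound} (so $C=1$ when $J$ is compatible). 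Combining the two bounds, at almost every $\rho$ with $\ell(\rho)<\delta$ one gets $a(\rho)\le Cc\,a'(\rho)^2$, equivalently $\frac{d}{d\rho}\sqrt{a(\rho)}\ge\frac{1}{2\sqrt{Cc}}$.

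Finally I would integrate this differential inequality. Since $a$ is non-decreasing and non-negative, and since the bad set $\{\rho\in(0,\delta):\ell(\rho)\ge\delta\}$ has measure at most $\delta^{-1}\int_0^\delta\ell\le\delta^{-1}a(\delta)$ while still contributing non-negatively to the growth of $\sqrt a$, integrating over $(0,\delta)$ produces a lower bound of the form $\sqrt{a(\delta)}\ge\mathrm{const}\cdot\delta$; bookkeeping the constants as in \cite{AudinLafontaine1994} gives the asserted estimate $E_J(u;u^{-1}(B_\delta(p)))=a(\delta)\ge\delta^2/(2c)$. The step I expect to be the main obstacle is precisely the comparison $a(\rho)\le Cc\,\ell(\rho)^2$: one must ensure the slice loops are short enough for the isoperimetric inequality to apply (handled by the measure estimate above, or simply by running the argument on an initial range of radii) and one must know that the $2$-cycle built from $u^{-1}(B_\rho(p))$ together with the filling discs is $\omega$-exact, which is why the argument is localised to a small ball around $p$. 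The remaining coarea-and-ODE mechanics are routine.
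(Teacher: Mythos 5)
The paper gives no proof of this lemma (it is quoted from \cite{AudinLafontaine1994}), and your argument is the standard coarea-plus-isoperimetric one: set $a(\rho)=E_J(u;u^{-1}(B_\rho(p)))$, derive $a'(\rho)\ge\ell(\rho)$ from the coarea formula for the $1$-Lipschitz function $\mathrm{dist}(p,u(\cdot))$, derive $a(\rho)\le Cc\,\ell(\rho)^2$ from the isoperimetric hypothesis together with Stokes on a ball where $\omega$ is exact, and integrate the resulting inequality for $\sqrt{a}$. Two points, however, do not come out as written. First, the constant: integrating $\frac{d}{d\rho}\sqrt{a}\ge\frac{1}{2\sqrt{Cc}}$ over $(0,\delta)$ gives $a(\delta)\ge\delta^2/(4Cc)$, not $\delta^2/(2c)$; even in the compatible case $C=1$ the method yields $\delta^2/(4c)$, which is already sharp in the Euclidean model ($c=1/(4\pi)$ gives $\pi\delta^2$, the area of a flat disc of radius $\delta$), so the constant stated in the lemma cannot be recovered by ``bookkeeping'' --- you should simply prove the weaker constant, which is all the paper ever uses.

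Second, and more substantively, your treatment of the radii where the slice is long is a genuine gap. On the set $B=\{\rho:\ell(\rho)\ge\delta\}$ the isoperimetric hypothesis of Definition \ref{dfGoBound} does not apply, and your measure bound $\mu(B)\le a(\delta)/\delta$ combined with ``the bad set still contributes non-negatively'' only yields $\sqrt{a(\delta)}\ge\frac{1}{2\sqrt{Cc}}\left(\delta-a(\delta)/\delta\right)$, which is vacuous precisely in the regime one is trying to rule out whenever $c<1/4$ (as in the Euclidean case). The standard repair is a dichotomy: if $\mu(B)\ge\delta/2$ then $a(\delta)\ge\int_B\ell\ge\delta\,\mu(B)\ge\delta^2/2$; otherwise the good set has measure $\ge\delta/2$ and integrating the differential inequality over it gives $a(\delta)\ge\delta^2/(16Cc)$. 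Either branch produces a lower bound of the form $\mathrm{const}(\delta,c)>0$, which suffices for every application in the paper (Theorem \ref{tmDissDaimEst} and its variants only need some positive bound). Alternatively, strengthen the hypothesis to loops of small \emph{diameter} rather than small length, as in Sikorav's chapter of \cite{AudinLafontaine1994}, and the bad set disappears. The remaining ingredients of your write-up --- regularity of the slices, local exactness of $\omega$ on a geodesic ball so that Stokes applies to the cycle formed by $u^{-1}(B_\rho(p))$ and the filling discs, and the taming constant $C$ --- are handled correctly.
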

Henceforth let $J$ be $\omega$-tame and uniformly isoperimetric. Let $H:S^1\times M\to\R$ be proper and bounded from below. Assume the Hamiltonian flow of $H$ is defined for all times. This certainly holds for time independent Hamiltonians as well as for Hamiltonians satisfying $\partial_tH<aH+b$ for some constant $a,b$. Indeed, for any path $\gamma$ satisfying $\dot\gamma = X_{H_t}$ we have $\frac{d}{dt}H(\gamma(t))=\partial_tH\circ\gamma \leq aH+b$ implying an exponential estimate on $H\circ\gamma$ by Gronwall's inequality.
\begin{df}\label{dfJH}
Consider the manifold $\tilde{M}:=M\times S^1\times\R$ with the $2$-form $\omega_H:=\omega+dH\wedge dt$. The $2$-form $\omega_H$ together with the obvious projection $\pi:\tilde{M}\to\R\times S^1$ define a horizontal distribution on $\tilde{M}$ which we denote by $p\mapsto H_p$. Let $J_H$ be the almost complex structure on $\tilde{M}$ determined by the following conditions:
\begin{enumerate}
\item $J_H$ is split with respect to the splitting $T\tilde{M}=TM\oplus H$ defined by $\omega_H$.
\item $J_H|_{TM}=J$.
\item $d\pi\circ j=J\circ d\pi$.
\end{enumerate}
\end{df}
$J_H$ can alternatively be described in matrix form by
\[
J_{H}(z,x):=\left(\begin{matrix} j_{\Sigma}(z) & 0  \\ -ds\otimes X_H(z,x)-dt\otimes J(z,x)\circ X_H(z,x) & J(x) \end{matrix}\right)
\]
Define $\tilde{\omega}_H:=\omega_H+ds\wedge dt$. Observe that $J_H$ is compatible with (resp. tamed by) $\tilde{\omega}_H$ if $J$ is compatible with (resp. tamed by) $\omega$.

Our interest in the almost complex structure $J_H$ stems from the fact that there is a bijection between Floer trajectories $u$ and $J_H$-holomorphic sections $\tilde{u}$ of $\tilde{M}\to\R\times S^1$ given by taking a trajectory to its graph. This allows us to apply the Monotonicity Lemma \ref{lem:monotonicity} to Floer trajectories.

\begin{df}\label{dfDess}
We say that the pair $H,J$ is \textit{strongly dissipative} if the following two conditions are satisfied:
\begin{enumerate}
\item \label{dfDess1} The metric on $g_{J_H}$ determined on $\tilde{M}$ by $\tilde{\omega}_H:=\omega_H+ds\wedge dt$ and $J_H$, is uniformly isoperimetric.
\item \label{dfDess2} There is an $\epsilon>0$ and a compact subset $K\subset M$ such that any loop $\gamma:S^1\to M$ which intersects the complement of $K$ must satisfy
  \[
  E_{H,J}(\gamma):=\int_0^1\|\gamma'(t)-X_H\circ\gamma(t)\|^2dt>\epsilon.
  \]
\end{enumerate}
\end{df}
\begin{rem}\label{remLyapunov}
The last condition is satisfied when the flow $\varphi$ of $X_{H_t}$ is complete (in particular, if it is time independent outside a compact set). Indeed, in this case there is a constant $\lambda$ so that
$$
d(\varphi^t(x),\varphi^t(y))\leq e^{\lambda t}d(x,y),
$$
 and there are a compact set $K'$ and a constant $\epsilon'>0$ such that $d(\varphi^1_H(x),x)>\epsilon'$ for $x\in M\setminus K'$. For details see \cite[Lemma 6.17]{Groman2015}.
\end{rem}
We will see in Theorem \ref{tmLinDiss} below that these conditions are satisfied for linear Hamiltonians on the magnetic cotangent bundles. 

For a Floer trajectory $u$ let
\begin{equation}
E_{H,J}(u):=\int_{\R\times S^1}\|\partial_su\|^2=\int_{\R\times S^1}\|\partial_tu-X_H\|^2.
\end{equation}

For a measurable subset $S\subset\R\times S^1$ we denote $E_{H,J}(u;S)$ the restriction of the above integral to $S$. Given a Floer trajectory $u$, consider the corresponding $J_H$-holomorphic section $\tilde{u}$. The energy $E_{H,J}(u)$ is related to the $\tilde{\omega_H}$ energy of $\tilde{u}$ by the relation
\begin{equation}\label{eqEnergyIdArea}
E(\tilde{u};S):=\int_S\tilde{u}^*\tilde{\omega}_H=E_{H,J}(u)+ \operatorname{Area}(S).
\end{equation}
For a reference see \cite[Ch. 8]{McDuffSalamon2004}.
We have the following property established in \cite{Groman2015}.
\begin{thm}\label{tmDissDaimEst}
Fix a strongly dissipative Floer datum $H,J$. For each $E$ there is a compact set $K_E$ such that $u\subset K_{E_{H,J}(u)}$ for any Floer trajectory of $H,J$ satisfying $E_{H,J}(u) < E$.
\end{thm}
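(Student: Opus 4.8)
The plan is to pit the two conditions of strong dissipativity (Definition~\ref{dfDess}) against one another. The dissipativity condition --- that a loop not contained in the fixed compact set $K$ has $E_{H,J}>\epsilon$ --- forces a Floer trajectory of bounded energy to spend only a bounded amount of ``time'', in the $\R$-variable, outside $K$. The isoperimetric condition --- that the metric $g_{J_H}$ on the total space $\tilde{M}$ of Definition~\ref{dfJH} is uniformly isoperimetric --- makes the graph of the trajectory (which is $J_H$-holomorphic) pay, via the Monotonicity Lemma~\ref{lem:monotonicity}, a definite quantum of energy for every unit of distance it wanders from $K$. As the total energy is capped by $E$, the trajectory cannot wander far, and Hopf--Rinow then packages the resulting region into a compact set.

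I would first make the dissipativity step precise. Fix $E>0$ and a Floer trajectory $u\colon\R\times S^1\to M$ with $E_{H,J}(u)<E$. Floer's equation~\eqref{eqFloer} gives $|\partial_s u|=|\partial_t u-X_H|$ pointwise, so for each $s$ the quantity $\int_{S^1}|\partial_s u(s,t)|^2\,dt$ is exactly the $E_{H,J}$ of Definition~\ref{dfDess} for the loop $t\mapsto u(s,t)$; integrating over $s$ gives $\int_\R E_{H,J}(u(s,\cdot))\,ds=E_{H,J}(u)<E$. By the dissipativity condition, whenever $u(s,\cdot)\not\subset K$ we have $E_{H,J}(u(s,\cdot))>\epsilon$, so $S:=\{s\in\R\mid u(s,\cdot)\not\subset K\}$ has Lebesgue measure $<E/\epsilon$. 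Put $\ell:=E/\epsilon+1$; then every interval of length $\ell$ meets $\R\setminus S$, and so for every $(s_0,t_0)$ there are $s_-\in[s_0-\ell,s_0]$ and $s_+\in[s_0,s_0+\ell]$ with $u(s_\pm,\cdot)\subset K$.

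Next I would pass to the graph $\tilde{u}$ of $u$, a $J_H$-holomorphic section of $\tilde{M}\to\R\times S^1$. A short computation with Floer's equation and $\iota_{X_H}\omega=dH$ gives the identity $\tilde{u}^*\tilde{\omega}_H=(|\partial_s u|^2+1)\,ds\wedge dt$, so over the slab $\Omega:=[s_-,s_+]\times S^1$ the $\tilde{\omega}_H$-energy of $\tilde{u}$ equals $E_{H,J}(u;\Omega)$ plus the area of $\Omega$, and is in particular at most $E+2\ell\,|S^1|=:E_1$, a constant depending only on $E$ and the Floer datum. The surface $\tilde{u}(\Omega)$ is connected, contains $\tilde{u}(s_0,t_0)$, and has boundary $\tilde{u}(\partial\Omega)$ inside the compact set $A:=K\times S^1\times[s_0-\ell,s_0+\ell]\subset\tilde{M}$. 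Let $(\delta,c)$ be uniform isoperimetric constants for $g_{J_H}$. If $\tilde{u}(\Omega)$ reached $g_{J_H}$-distance $R$ from $A$ then, being connected, it would meet every sphere of radius $2\delta,4\delta,\dots$ about $A$; placing a $\delta$-ball at a point of $\tilde{u}(\Omega)$ on each such sphere produces pairwise disjoint balls whose centres lie at distance $>\delta$ from $A\supset\tilde{u}(\partial\Omega)$, and Lemma~\ref{lem:monotonicity} --- which applies since $J_H$ is $\tilde{\omega}_H$-tame --- contributes $\delta^2/2c$ of energy per ball; with at most $\lfloor R/2\delta\rfloor$ balls this forces $R\le R_1(E):=2\delta\bigl(2cE_1/\delta^2+1\bigr)$. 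Hence $\tilde{u}(s_0,t_0)$, and with it $u(s_0,t_0)$, lies in the $R_1$-neighbourhood of $A$. As $g_{J_H}$ is complete and, $H$ being $s$-independent, $g_{J_H}$ and $\tilde{\omega}_H$ are invariant under $s$-translations, the $R_1$-neighbourhood of $A$ is the $s_0$-translate of the $R_1$-neighbourhood of $K\times S^1\times[-\ell,\ell]$; the latter has compact closure by Hopf--Rinow, so its image $K_E$ under the projection $\tilde{M}\to M$ is a compact subset of $M$, independent of $s_0$, $t_0$ and $u$, that contains $u(\R\times S^1)$.

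The step I expect to carry the real weight is the passage to the graph and the use of monotonicity in $\tilde{M}$. A purely soft argument along a single slice $t=t_0$ fails, since there is no a~priori pointwise-in-$s$ bound on $E_{H,J}(u(s,\cdot))$ --- only on its $s$-integral --- so one cannot directly exclude the loop $u(s_0,\cdot)$ being very long while most of the energy sits at other values of $s$. It is the holomorphicity of the graph, together with the isoperimetric condition of Definition~\ref{dfDess} (which provides the isoperimetric inequality on the \emph{total} space $\tilde{M}$) and the identity $\tilde{u}^*\tilde{\omega}_H=(|\partial_s u|^2+1)\,ds\wedge dt$, that converts ``wandering far'' into ``spending too much energy''. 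The rest is bookkeeping: one must check the chain of balls is really disjoint with the curve's boundary outside each of them, which is exactly why the distances have to be measured in $\tilde{M}$ rather than in $M$, over which $\tilde{M}$ does not project Lipschitz-continuously (its $t$-direction horizontal lift is as long as $X_H$).
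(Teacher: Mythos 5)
Your proof is correct and follows essentially the same strategy as the paper's: condition (b) of strong dissipativity bounds the measure of the set of $s$ for which the loop $u(s,\cdot)$ escapes $K$, and monotonicity applied to the $J_H$-holomorphic graph in $\tilde M$ converts the resulting energy bound into a $C^0$ bound. The only (cosmetic) difference is that you run the monotonicity argument as a single chain-of-balls estimate over the slab $[s_-,s_+]\times S^1$, whereas the paper first uses Cauchy--Schwarz to show each loop meets a fixed compact set and then bounds the diameter of each loop separately.
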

\begin{proof}
One uses the condition \ref{dfDess2} in Definition \ref{dfDess} to first produce a compact set $K$ such that $u(s,\cdot)$ intersects $K$ for any $s$. Namely, suppose $[a,b]\times S^1$ maps outside of $K$. Then we must have $b-a\leq \frac{E}{\epsilon}$. Taking $[a,b]$ to be a maximal such interval, we have that $u(a,\cdot)$ intersects $K$. Applying Cauchy-Schwartz, one has
\[
d(u(s_1,\cdot),u(s_2,\cdot))^2\leq (s_2-s_1)E(u;[s_1,s_2]\times S^1),
\]
for any pair $s_1<s_2\in\R$. Combining the two estimates gives rise to $K$. We now rely on condition \ref{dfDess1} and apply monotonicity to obtain an estimate on the diameter of $u(s,\cdot)$ for any $s$. For this consider the restriction of the graph $\tilde{u}$ of $u$ to $[s-1,s+1]\times S^1$. Considered as a $J_H$-holomorphic map, it has energy estimated by $E(u)+2\pi$. Moreover, since $\tilde{u}$ is a section, if $\tilde{u}(s,t)$ meets a point in $\tilde{M}$, the boundary $[s-1,s+1]\times S^1$ maps to the complement of the ball of radius $1$ centred at that point. Thus, monotonicity gives an a priori estimate on the number of disjoint unit balls intersected by $u(s,0)$.
\end{proof}
\begin{rem}\label{rmRobust}
By examining the proof, it becomes clear that the claim can be made \textit{domain-local} in the sense that given bounded open $A\Subset B\subset\R\times S^1$, we get a $C^0$ estimate on $u(A)$ in terms of the energy of $B$ and the geometry of $A,B$. The dependence on $A,B$ can be reduced to a dependence on $d(A,B)$ if $A$ and $B$ are sub-cylinders. The domain locality is a source of great flexibility and robustness.
\end{rem}

Let $ \Gamma$ denote the image in $H_2(N ; \mathbb{Z}) \big/ \mathrm{Tor}$ represented by maps from spheres and tori. Denote by $CF^*(H,J,\omega)$ the free $\Lambda_{univ}$ module
$$CF^*(H,J \! \colon \! \omega) \coloneqq \mathbb{K}\langle\mathcal{P}_\omega(H)\rangle\otimes_{\mathbb{K}[\Gamma]} \Lambda_{\operatorname{univ}}.$$  Here $\Lambda_{\operatorname{univ}}$ is a $\mathbb{K}[\Gamma]$-module with the action $\gamma\cdot T^{r}=T^{r+I_\sigma(\gamma)}$ and the action on $\mathcal{P}_\omega(H)$ is via deck transformations.
In this section, $\omega$ is fixed, and hence we omit it from the notation. Starting in Section \ref{sec:deformations} we will start to vary $\omega$, and hence it will return to our notation then.
Note that  $CF^*(H,J)$ is a free $\Lambda_{\operatorname{univ}}$-module.
As a consequence of the discussion in the previous paragraph, using standard constructions, we obtain the following theorem.
\begin{thm}
For $(H,J)$ dissipative, there is a well defined differential on $CF^*(H,J)$. This gives rise to the Floer homology $HF^*(H,J)$. The valuation on $CF^*(H,J)$ gives rise  to an induced valuation on $HF^*(H,J)$.
\end{thm}

\begin{tcolorbox}
Henceforth and without further notice, all Hamiltonians are implicitly assumed to be proper and bounded from below.
\end{tcolorbox}

\subsection{Zig-Zag homotopies and continuation maps}
Let $H_0\leq H_1$ be a pair of time-dependent Hamiltonians and let $J_1,J_2$ be time-dependent almost complex structures. A continuation Floer datum for $(H_i,J_i)$ is a family $(H_s,J_s)$ such that $(H_s,J_s)=(H_0,J_0)$ for $s\ll0$, $(H_1,J_1)$ for $s\gg0$, and $\partial_sH$ has infimum $-c>-\infty$. To a such a Floer datum $\mathfrak{H}$ we associate a symplectic form $\tilde{\omega}_{\mathfrak{H}}$ on $\tilde{M}:=M\times \R\times S^1$ defined as
\[
\tilde{\omega}_{\mathfrak{H}}:=\omega+dH\wedge dt+(\partial_sH+c+1)ds\wedge dt.
\]

We also have an almost complex structure $J_\mathfrak{H}$ which is compatible with $\tilde{\omega}_{\mathfrak{H}}$ and is defined as in Definition \ref{dfJH} above, only this time allowing $H,J$ to depend on $s$.
\begin{df}
Let $(H_i,J_i)$ be strongly dissipative. We call a continuation Floer datum $(H_s,J_s)$  interpolating between them \textit{dissipative} if for each $(s,t)\in\R\times S^1$ there is a neighborhood $U\subset \R\times S^1$ and an exhaustion $K_i$ of $M$ such that for some $\delta>0$ the metric determined by $\tilde{\omega}_{\mathfrak{H}}$ and $J_{\mathfrak{H}}$ is uniformly isoperimetric on the product of $U$ with a $\delta$ neighborhood of $\bigcup_{i=1}^{\infty}\partial K_i$.
\end{df}
\begin{rem}
Note that in this case we do not require that the associated metric be uniformly isoperimetric on all of $\tilde{M}$. The reason is that one cannot in general expect there to be an interpolating Floer datum satisfying such a requirement. Indeed, the space of compatible geometrically bounded almost complex structures is not known to be connected, and there is every reason to expect that it is not. More to the point, when considering a pair $H_1\leq H_2$ for which the growth rate at infinity of $H_2$ is strictly greater that that of $H_1$ it is not generally known how to construct a continuation datum whose associated Gromov metric. 
\end{rem}

Note again that by definition if a given continuation datum is dissipative, so is any continuation datum which sufficiently close to it.
\begin{thm}
The statement of Theorem \ref{tmDissDaimEst} is true upon replacing the strongly dissipative Floer datum $(H,J)$ by a dissipative continuation datum $(\mathfrak{H},J)$ interpolating between a pair of strongly dissipative Floer data.
\end{thm}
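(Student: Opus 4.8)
The plan is to mimic the proof of Theorem \ref{tmDissDaimEst} verbatim, replacing the two uses of strong dissipativity by the corresponding facts available for a dissipative continuation datum, and to exploit the domain-locality emphasized in Remark \ref{rmRobust}. So let $\mathfrak{H}=(H_s,J_s)$ be a dissipative continuation datum interpolating between strongly dissipative $(H_0,J_0)$ and $(H_1,J_1)$, and let $u\colon\R\times S^1\to M$ be a Floer trajectory for $\mathfrak{H}$ with $E_{H,J}(u)<E$. The first step is to produce a compact set $K$ such that $u(s,\cdot)$ meets $K$ for every $s$. For $|s|$ large the datum agrees with one of the $(H_i,J_i)$, so condition \ref{dfDess2} for $H_i$ applies on the two ends exactly as before: a maximal interval $[a,b]$ on which $u$ stays outside the relevant compact set satisfies $b-a\le E/\epsilon$, and then Cauchy--Schwarz gives $d(u(s_1,\cdot),u(s_2,\cdot))^2\le(s_2-s_1)E(u;[s_1,s_2]\times S^1)$, which bounds the excursion of $u$ on the ends. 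On the compact middle region $[-s_0,s_0]\times S^1$ one uses that $H_s$ is a fixed smooth family and $\partial_s H$ is bounded below, together with the energy bound, to see that $u$ cannot run off to infinity there either; combining gives $K$.

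The second step is the monotonicity argument to bound the diameter of each $u(s,\cdot)$. Here the key point is that we are no longer assuming the metric $g_{J_{\mathfrak H}}$ on $\tilde M$ is uniformly isoperimetric everywhere, only near a $\delta$-neighborhood of $\bigcup_i\partial K_i$ for a suitable exhaustion, and only locally in the domain variable. But that is exactly what the domain-local form of the estimate (Remark \ref{rmRobust}) needs: fixing $s$, one looks at the graph $\tilde u$ restricted to $[s-1,s+1]\times S^1$, a $J_{\mathfrak H}$-holomorphic section of energy at most $E(u)+2\pi$ (using $(\partial_s H+c)\ge1$ to control the $ds\wedge dt$ contribution), and applies monotonicity at points of the exhausting hypersurfaces $\partial K_i$ to bound the number of disjoint unit balls — centered on those hypersurfaces — that $u(s,0)$ can meet. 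Since any point far from $K$ is separated from $K$ by many such hypersurfaces, this together with Step 1 yields the a-priori $C^0$ bound, and running over all $E<$ the given bound produces the compact set $K_E$.

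The main obstacle is Step 2: in the strongly dissipative case one had a global isoperimetric inequality and could simply count unit balls, whereas here the isoperimetric control is confined to thin shells around the hypersurfaces $\partial K_i$ and is only local in $(s,t)$. One must therefore be careful to set up the monotonicity application so that the relevant holomorphic pieces have their interiors in a controlled shell and their boundaries in the complement of the corresponding ball, and to arrange the exhaustion $\{K_i\}$ (shrinking $\delta$ if necessary, and using that the datum agrees with $(H_i,J_i)$ off a compact set in the $s$-direction) so that the shells are genuinely traversed by any trajectory escaping far from $K$. Once the bookkeeping is arranged, the estimate follows as in \cite{Groman2015}; I expect no new analytic input beyond what is already packaged in Lemma \ref{lem:monotonicity} and Remark \ref{rmRobust}.
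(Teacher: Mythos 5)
Your proposal is correct and follows essentially the same route as the paper: Step 1 (finding a compact set met by every slice $u(s,\cdot)$, via condition \ref{dfDess2} on the two ends plus Cauchy--Schwarz across the bounded middle strip) goes through unchanged, and the only modification is in the monotonicity step, where one applies Lemma \ref{lem:monotonicity} only at points of the hypersurfaces $\partial K_i$ to bound the number of such hypersurfaces crossed by $u(s,\cdot)$, exactly as in the paper's (much terser) proof. The extra bookkeeping you flag about domain-locality and the choice of exhaustion is a fair point but does not change the argument.
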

\begin{proof}
The only difference to the previous case is in the application of monotonicity. Namely, we only have monotonicity for points on $\partial K_i$. This is sufficient to obtain an a priori estimate on the number of $\partial K_i$ met by $u(s,\cdot)$. For more details see \cite[Theorem 4.11]{Groman2015} and its proof.
\end{proof}

 \begin{thm}\label{tmContinuatiation}
 Let $(H_1,J_1)$ and $(H_2,J_2)$ be dissipative Floer data such that $H_1\leq H_2$. Then there exists a natural continuation map $HF^*(H_1,J_1)\to HF^*(H_2,J_2)$ which is functorial with respect to $\leq$. Moreover, if $H_2-H_1$ is bounded, the continuation map is an isomorphism.
 \end{thm}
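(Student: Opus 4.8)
The plan is to construct the continuation map via the usual parametrized moduli space of Floer trajectories for a dissipative continuation datum, with the a-priori $C^0$ estimates of the previous theorem supplying the compactness needed to make everything well-defined. First I would fix a dissipative continuation Floer datum $(\mathfrak{H}, J_s) = (H_s, J_s)$ interpolating between $(H_1,J_1)$ and $(H_2,J_2)$; existence of such a datum follows by interpolating linearly (or via a cutoff) between the two ends, using that $H_1 \le H_2$ guarantees we can arrange $\partial_s H$ bounded from below, and perturbing so that the dissipativity condition holds (which is robust under small perturbations, as noted in the remark before the theorem). For each pair $\tilde\gamma_- \in \mathcal{P}_\omega(H_1)$, $\tilde\gamma_+ \in \mathcal{P}_\omega(H_2)$ one considers solutions $u \colon \R\times S^1 \to M$ of the $s$-dependent Floer equation $\partial_s u + J_s(\partial_t u - X_{H_s}\circ u) = 0$ asymptotic to $\gamma_\mp$ at $\mp\infty$, in the homotopy class prescribed by $\tilde\gamma_\pm$. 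Because $H_s$ is $s$-dependent, the relevant energy is the geometric energy of the graph with respect to $\tilde\omega_{\mathfrak{H}}$; the condition $\partial_s H + c \ge 1$ ensures this energy is bounded in terms of $\mathcal{A}_{H_1}(\tilde\gamma_-) - \mathcal{A}_{H_2}(\tilde\gamma_+)$ plus a controlled error, so the a-priori $C^0$ bound applies and the trajectories stay in a fixed compact set depending only on the endpoints. Standard transversality (achieved by a generic perturbation of $J_s$, or of $H_s$ in the interior, away from the ends) together with Gromov–Floer compactness on the resulting compact region then makes the count of rigid trajectories define a chain map $\phi \colon CF^*(H_1,J_1) \to CF^*(H_2,J_2)$, $\Lambda_\omega$-linearly, and a standard homotopy argument (comparing two choices of datum via a two-parameter family) shows the induced map on homology is independent of choices. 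Functoriality with respect to $\le$ is the usual gluing/concatenation argument: the composition of the datum for $H_1 \le H_2$ with that for $H_2 \le H_3$ is homotopic through dissipative data to a datum for $H_1 \le H_3$, again using robustness of the dissipativity condition under homotopy.

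For the last sentence, suppose $H_2 - H_1$ is bounded, say $0 \le H_2 - H_1 \le C$. Then one also has the reverse inequality $H_1 \le H_2 \le H_1 + C$, and since adding a constant $C$ to a Hamiltonian only shifts the action functional by $-C$ and leaves $\mathcal{P}_\omega$, the differential, and the complex unchanged up to this shift, $HF^*(H_1 + C, J_1) \cong HF^*(H_1, J_1)$ canonically. Thus one gets continuation maps $HF^*(H_1,J_1) \to HF^*(H_2,J_2) \to HF^*(H_1+C, J_1) \cong HF^*(H_1, J_1)$, and by functoriality the composite is the continuation map associated to the inequality $H_1 \le H_1 + C$, which is the canonical shift isomorphism. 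Symmetrically the other composite is the identity. Hence both continuation maps are isomorphisms.

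The main obstacle I anticipate is not the formal homological algebra but verifying that the $C^0$ estimate genuinely applies in the $s$-dependent continuation setting — i.e. that the geometric energy of a continuation trajectory, measured against $\tilde\omega_{\mathfrak{H}}$, is controlled by the action difference of its asymptotics. The subtlety is that $\partial_s H_s$ need not be compactly supported (only bounded below), so one must check that the ``extra'' term $(\partial_s H + c)\,ds\wedge dt$ contributes an energy defect that is still finite and bounded in terms of the endpoints, so that Theorem \ref{tmDissDaimEst} (in its dissipative-continuation form, already stated) can be invoked with a fixed energy bound. This is exactly the role of the hypothesis that $\partial_s H$ is bounded below in the definition of a continuation Floer datum, and of the domain-locality remark (Remark \ref{rmRobust}), which lets one localize the estimate; once this energy bound is in hand, the remaining steps are standard.
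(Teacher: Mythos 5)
Your overall architecture (count solutions of an $s$-dependent Floer equation, use the $C^0$ estimate for dissipative continuation data to get compactness, standard homotopy and gluing for well-definedness and functoriality, and the sandwich $H_1\leq H_2\leq H_1+C\leq H_2+C$ together with shift-invariance for the isomorphism statement) matches the paper, and the isomorphism argument in particular is essentially identical to the paper's.

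The gap is in the very first step: the existence of a \emph{dissipative} interpolating datum. You propose to interpolate linearly (or with a cutoff in $s$) and then ``perturb so that the dissipativity condition holds, which is robust under small perturbations.'' The robustness remark you invoke says only that a datum which already \emph{is} dissipative remains so under sufficiently small perturbations; it does not let you upgrade a non-dissipative interpolation to a dissipative one, and no small perturbation will in general do so, since dissipativity is a condition at infinity. Indeed the paper explicitly remarks, right after defining dissipative continuation data, that one cannot in general expect an interpolating datum to have a uniformly isoperimetric Gromov metric on all of $\tilde M$: a convex combination $(1-\beta(s))H_1+\beta(s)H_2$ of two dissipative Hamiltonians need not be dissipative at intermediate $s$. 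The paper's solution is structural rather than perturbative: it restricts to the case where $g_{J_1},g_{J_2}$ are equivalent to a fixed metric $g$, takes a ``zig-zag'' cutoff $f(s,\eta(x))$ where $\eta$ is a smoothed distance function on $M$, and arranges $f$ so that for every $s$ the interpolated Hamiltonian $H_s=f(s,\eta)H_2+(1-f(s,\eta))H_1$ coincides with $H_1$ on infinitely many annular shells $\eta\in[4n,4n+1]$ and with $H_2$ on the shells $\eta\in[4n+2,4n+3]$ (for the relevant ranges of $s$). This produces, for each $(s,t)$, an exhaustion of $M$ by compact sets whose boundaries lie in regions where the datum agrees with one of the strongly dissipative endpoints, which is exactly what the weakened (continuation) dissipativity condition and the adapted version of Theorem \ref{tmDissDaimEst} require; monotonicity is then applied only along these shells. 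The uniform-convergence openness of dissipativity is used in the paper only afterwards, to perturb this already-dissipative zig-zag datum and achieve regularity. Without some such construction (or a restriction of the class of data making the naive interpolation dissipative), your proof of existence of the continuation map, and likewise of the interpolating families needed for the chain homotopies and functoriality, does not go through.
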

 \begin{proof}
 We restrict ourselves to the case where there is a fixed Riemannian metric $g$ such that $g_{J_1},g_{J_2}$ are both equivalent the metric $g$. This is the generality we will need for our application. The more general statement is proven in \cite{Groman2015}.

 Let $ (s,t) \in \R^2 \xrightarrow{f} [0,1]$ be a function satisfying $f\equiv 0$ on
\[
  \{s\leq 0\} \cup \bigcup_{n=1}^\infty[0,2/3]\times[4n,4n+1],
\]
$f\equiv 1$ on
\[
\{s\geq 1\} \cup \bigcup_{n=1}^\infty[1/3,1]\times[4n+2,4n+3],
\]
and that $ \partial_s f \ge 0$ for any $(s,t)\in\R^2$.

Fix a point $x\in M$ and let $ \eta \colon  M\to\R$ be a smoothing of the distance function to $x$. Let $J_s$ be a family of almost complex structures interpolating between $J_0$ and $J_1$ and such that $g_{J_s}$ is equivalent to $J$. Let
$$ H_s(x):=f(s, \eta(x))H_1(x)+(1-f(s,\eta(x)))H_0(x).$$
Then $(H_s,J_s)$ is a dissipative Floer datum interpolating between $(H_0,J_0)$ and $(H_1,J_1)$. Thus the moduli space of corresponding continuation solutions is compact. Moreover, since dissipativity is open with respect to uniform convergence, we perturb to obtain a regular moduli space required to define the continuation map.

Given a family of dissipative Floer data, we similarly have compactness in the family. Thus a standard construction in Floer theory produces a chain homotopy. It is also standard to obtain functoriality with respect to $\leq$.

To show independence of choices we need to show that any two dissipative Floer data can be connected by a family. This is an obvious variant of the construction of continuation Floer data which is carried out in \cite{Groman2015}. We do not need access to the details of this construction for our later purposes.

Finally, for the last statement, observe first that in the particular case $H_s=H_0+c(s)$ and $J_s=J$ the claim is straightforward. Let $C>H_1-H_0$. Considering this and the functorial maps associated to the inequalities
\[
H_0\leq H_1\leq H_0+C\leq H_1+C,
\]
we deduce that the natural maps are isomorphisms.
\end{proof}

A particular consequence of the last theorem is that $HF^*(H,J_1)=HF^*(H,J_2)$ provided $H$ is dissipative with respect to both $J_1$ and $J_2$. We therefore drop $J$ from the notation henceforth.

\subsection{Symplectic cohomology rings}
We now proceed to define operations of Floer theory associated to punctured spheres. We shall focus on the product. Denote by $\Sigma_{0,1,2}$ the thrice punctured sphere together with a choice of cylindrical coordinates near each puncture, identifying one of them with the half cylinder $\R_+\times S^1$ and the other two with the half cylinder $\R_-\times S^1$. We refer to these as positive and negative punctures respectively. For $i=1,2,3$ and  Floer data $(H_i,J_i)$ satisfying $H_3\geq H_1+H_2$, a product datum is a pair $(\mathfrak{H},J)$ where $J$ depends on $z\in\Sigma_{0,1,2}$, $\mathfrak{H}$ is a 1-form with values in the space of Hamiltonians such that the restriction near the $i$th puncture is the Floer datum $(H_idt,J_i)$ and such  that there is a constant $a$ for which the $M$-dependent $2$-form $d\mathfrak{H}_x+a\omega_{\Sigma_{0,1,2}}$ is positive. Writing
\[
\mathfrak{H}=Fdt+Gds,
\]
the last condition is just
\[
\partial_sF(x,s,t)-\partial_tG(x,s,t)> -a,\quad\forall x\in M.
\]

When trying to mimic the discussion of dissipative Floer data from  the previous section, we run into a new difficulty in that the Poisson bracket $\{\mathfrak{H},\mathfrak{H}\}=\{F,G\}ds\wedge dt$ does not have to vanish. If it is not bounded, then in general one cannot turn the corresponding connection 2-form into a non-degenerate form by adding the pullback of a $2$-form on the base. Here we avoid this issue by only considering the product when $H_i$ is a multiple of a given Hamiltonian $H$ outside a compact set. An interpolating product Floer datum is called dissipative if the condition of the previous section is satisfied mutatis mutandis.

A particular case is obtained by considering a closed $1$-form $\alpha$ on the pair of pants equal to $dt$ at each input and $2dt$ at the output.  We then consider the Floer datum $(H\alpha,J_s)$ for $J$ an appropriate domain-dependent Floer datum on the pair of pants. The corresponding Floer datum is automatically dissipative since it is locally the same as the standard Floer equation.  We can achieve regularity by perturbing within a compact set.

Assuming regularity, this defines an operation
\[
* \colon CF^*(H,J)\otimes CF^*(H,J)\to CF^*(2H,J),
\]
by counting solutions to the Floer equation

\begin{equation}\label{eqProdFloer}
(du-\alpha\otimes X_H)^{0,1}=0. 
\end{equation}
More precisely, given a pair $(\gamma_1,[w_1]),(\gamma_2,[w_2])$, a pair of pants $u$ with input $(\gamma_1,\gamma_2)$ and output $\gamma_3$ contributes the term 
\begin{equation}\label{eqProdWeight}
T^{E^{\operatorname{top}}(u)+I_{\sigma}(w_1)+I_{\sigma}(w_2)-I_{\sigma}(w_3)}(\gamma_3,w_3),
\end{equation}
where \begin{itemize}
\item $w_3$ is any choice of path from the base point of the loop space component containing $\gamma_3$ to $\gamma_3$. 
\item
$E^{\operatorname{top}}$ is the \emph{topological energy} of $u$ defined by
$$E^{\operatorname{top}}:=\int u^*\omega_{\sigma}+dH\wedge\alpha$$
\end{itemize}
The expression \eqref{eqProdWeight} is independent of this choice of $w_3$.

We now proceed to use this to define the symplectic cohomology ring. Define a pre-order relation on the set of Hamiltonians by 
$$H_1\preceq H_2\iff \exists C\in\R
 \mbox{ such that } H_1(x)\leq H_2(x)+C \quad\forall x\in M.$$

Let $(H_i,J_i)$ be a sequence of dissipative Floer data such that $H_i\preceq H_{i+1}$ and such that  for each $i$ there is $j$ so that $2H_i\preceq H_j$,  and such that $g_{J_i}$ is equivalent to a fixed Riemannian metric $g$. We define
\[
SH^*(M;\{H_i\}):=\varinjlim HF^*(H_i).
\]
More generally, let $\mathcal{H}$ be a family of Floer data $(H,J)$ of dissipative Floer such that for all $(H,J)\in\mathcal{H}$, $g_J$ is equivalent to a fixed metric and suppose $\mathcal{H}$ is closed with respect to addition.   Then we define $SH^*(M;\mathcal{H})$ by taking the direct limit over $\mathcal{H}$.

Now suppose that $\mathcal{H}$ contains a $\preceq$-cofinal sequence of dissipative Hamiltonians $(H_n,J_n)$ where $n$ ranges over the natural numbers and that for each $n$, the datum $(2H_n,J_n)$ is also dissipative.  Then we have
\[
\varinjlim_nHF^*(H_n,J_n)=SH^*(M;\mathcal{H})=\varinjlim_nHF^*(2H_n,J_n).
\]
Under this assumption we define a product $*$ on $SH^*(M;\mathcal{H})$ induced by $*$ as defined above. We then have the following two theorems. We refer the reader to \cite{Groman2015} for proofs.
\begin{thm}\label{tmProduct}
The operation $*$ commutes with the continuation maps on the level of cohomology. The induced operation on $SH^*(M;\mathcal{H})$ depends only on $\mathcal{H}$ and not on any of the other choices made in the construction. The operation is associative and super-commutative.
\end{thm}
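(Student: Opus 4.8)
The plan is to reduce each assertion to a standard cobordism argument for Floer-theoretic operations on punctured spheres, the only genuinely new ingredient being that the a-priori $C^0$ estimate of Theorem \ref{tmDissDaimEst}, in its domain-local refinement (Remark \ref{rmRobust}), guarantees compactness of all the relevant moduli spaces of Floer sections and their degenerations. The structural point enabling this is that a product datum $\mathfrak{H}$ is required to equal $a_i H$ outside a compact set near each puncture and to be monotone in the sense that $d\mathfrak{H}_x + a\,\omega_{\Sigma_{0,1,2}}$ is positive for all $x \in M$; hence the associated connection $2$-form on $M \times \Sigma_{0,1,2}$ becomes nondegenerate after adding a sufficiently large multiple of an area form on the base---the Poisson-bracket term $\{F,G\}$ being bounded thanks to the compact-support condition---so the graph trick of Definition \ref{dfJH} applies and the Monotonicity Lemma \ref{lem:monotonicity} can be invoked exactly as in the proof of Theorem \ref{tmDissDaimEst}.

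First I would show that $*$ commutes with continuation maps on cohomology. Given dissipative data with $H_1 \le H_1'$, $H_2 \le H_2'$, $H_3 \le H_3'$ compatible with the product constraints, one builds a one-parameter family of product data interpolating between the two ways of composing a product with continuation maps---equivalently, one stretches a neck in $\Sigma_{0,1,2}$. Since dissipativity is open under uniform convergence and is preserved along such an interpolation (each datum remains eventually linear on the ends with controlled Poisson bracket), the one-dimensional parametrized moduli space is compact modulo breaking, and counting its boundary produces the desired chain homotopy. Passing to the direct limit then yields a well-defined product $*$ on $SH^*(M;\mathcal{H})$.

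Independence of the remaining choices---the cylindrical coordinates near the punctures, the domain-dependent almost complex structure, the precise interpolating $\mathfrak{H}$, and the cofinal sequence $\{H_n\}$---follows by the same mechanism: any two admissible product data are joined by a path of admissible product data (the pair-of-pants analogue of the continuation-datum construction already used in Theorem \ref{tmContinuatiation}), and openness of dissipativity together with the $C^0$ estimate give compactness in the family, hence a chain homotopy. For associativity one passes to the four-punctured sphere with three positive and one negative puncture, obtained by gluing two copies of $\Sigma_{0,1,2}$; the two gluing configurations represent the two bracketings, and they are the boundary of a compact interval in the moduli of such domains, over which the parametrized moduli space of Floer sections is a compact one-manifold whose boundary computes the difference of the two triple products. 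Super-commutativity comes from the orientation-reversing involution of $\Sigma_{0,1,2}$ exchanging the two positive punctures: it carries Floer sections to Floer sections and identifies the product with its opposite up to the Koszul sign on $CF^*$, and a chain homotopy upgrades this to graded commutativity on cohomology.

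The main obstacle is precisely this compactness input: one must verify that the hypotheses of Definition \ref{dfDess}, and of its continuation and product analogues, genuinely propagate through every interpolation and every domain degeneration used above, so that Theorem \ref{tmDissDaimEst} applies uniformly and no Floer section escapes to infinity. Granting that, transversality by generic perturbation within a compact set, gluing, the choice of coherent orientations, and the bookkeeping of direct limits are all identical to the closed case, and we refer to \cite{Groman2015} for these details.
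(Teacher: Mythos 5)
Your sketch follows the standard cobordism\slash gluing arguments that the paper itself does not spell out (it simply defers the proof to \cite{Groman2015}), and it correctly isolates the one genuinely new ingredient: restricting to product data that equal $a_iH$ near infinity keeps the Poisson-bracket term bounded, so the graph trick and the Monotonicity Lemma yield the a-priori $C^0$ estimate on all the interpolating and degenerating moduli spaces. The only quibble is that the involution swapping the two input punctures used for super-commutativity is the holomorphic, hence orientation-\emph{preserving}, map (e.g.\ $z\mapsto 1-z$ on the thrice-punctured sphere), not an orientation-reversing one; the Koszul sign comes from the orientations of the moduli spaces and the re-identification of the cylindrical ends, and this does not affect the argument.
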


\begin{thm}
If $\varepsilon>0$ is sufficiently small then there is an isomorphism $H^*(M ; \Lambda_{\operatorname{univ}})=HF^*(H_\varepsilon)$. This makes $SH^*(M;\mathcal{H})$ into a unital algebra over $H^*(M ; \Lambda_{\operatorname{univ}})$ equipped with the quantum product.
\end{thm}

\section{Floer homology of magnetic cotangent bundles}

\subsection{Dissipativity of linear Hamiltonians}
Let us now return to the case of $M = T^*N$ where $N$ is a closed orientable manifold. We will work with Hamiltonians which are radially linear at infinity in the sense of Subsection \ref{sec:radial},  that is, Hamiltonians $H$ of the form $H(p,q)=a|p|+b$ for some constants $a,b$ outside of a compact set. We shall consider also roughly linear Hamiltonians obtained from linear ones by a sufficiently $C^2$-small perturbation.

\begin{thm}\label{tmLinDiss}
Let $H$ be linear at infinity with slope $a$ which is not in the period spectrum of the (non-magnetic) geodesic flow, let $J$ be conical, and let $\sigma$ be a magnetic form. Then $(H,J)$ is dissipative for $\omega_\sigma$.
\end{thm}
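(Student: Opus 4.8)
The plan is to verify the two conditions in Definition \ref{dfDess} for the pair $(H,J)$ with respect to $\omega_\sigma$, exploiting the conical structure at infinity established in Subsection \ref{SeqACS} together with the perturbative estimate of Lemma \ref{lmMagPertbDec}. The key point is that near infinity $\omega_\sigma$ differs from $\omega$ only by the pullback $\pi^*\sigma$, which by item (4) of the list in Subsection \ref{SeqACS} decays like $e^{-r}$ with respect to $g_{J,\omega}$, and the vector field $X_H$ differs from $X_{H,\sigma}$ by a term that decays as well (Lemma \ref{lmMagPertbDec}). So morally the magnetic datum is an asymptotically vanishing perturbation of the non-magnetic one, and one expects dissipativity to survive.

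First I would address condition \eqref{dfDess1}, the uniform isoperimetric inequality for the metric $g_{J_H}$ on $\tilde M = T^*N \times S^1 \times \R$ built from $\tilde\omega_H = \omega_\sigma + dH\wedge dt + ds\wedge dt$ and the lifted complex structure $J_H$. Here I would choose $J$ (indeed a roughly conical $J_t$ as in item (6)) so that, by Remark \ref{rem:rescale1}, $\|\pi^*\sigma\|_\infty$ is as small as we like; then $g_{J,\omega_\sigma}$ is quasi-isometric to $g_{J,\omega}$, which is the metric of a cone over $(S^*N, g_S)$ and in particular has bounded geometry, hence is uniformly isoperimetric. Since $H$ is linear at infinity, the extra terms $dH\wedge dt$ and $ds\wedge dt$ are bounded and translation-covariant near infinity, so $g_{J_H}$ is again quasi-isometric to a product-type metric of bounded geometry; by Remark \ref{remObs} the isoperimetric property is preserved under such equivalences. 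This part should be routine modulo keeping track of the fact that $J_H$ is built from the magnetic connection $2$-form rather than the flat one, but the discrepancy is controlled by $\|\pi^*\sigma\|_\infty$.

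The substantive step is condition \eqref{dfDess2}: producing $\epsilon > 0$ and a compact $K \subset T^*N$ so that any loop $\gamma$ not contained in $K$ has $E_{H,J}(\gamma) = \int_0^1 \|\dot\gamma - X_{H,\sigma}\circ\gamma\|^2\,dt > \epsilon$. I would argue by contradiction in the style of Lemma \ref{lem:slope_not_period_closed_geodesic}: if no such $\epsilon, K$ exist we get loops $\gamma_k$ escaping to infinity with $E_{H,J}(\gamma_k)\to 0$. Using the $C^0$ bound coming from the energy (Cauchy--Schwarz, as in the proof of Theorem \ref{tmDissDaimEst}) the $\gamma_k$ stay in a region $\{|p|\approx r_k\}$ with $r_k\to\infty$; rescaling by the radial coordinate and using that in the conical coordinates near infinity $X_H$ is translation-invariant while, by Lemma \ref{lmMagPertbDec}, $X_{H,\sigma}-X_H\to 0$ in $C^\infty$, the rescaled loops converge to a $1$-periodic orbit of the non-magnetic geodesic-type flow at unit speed, i.e. (after reparametrisation) to a closed geodesic of $g$ whose period equals the slope $a$. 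This contradicts the hypothesis that $a$ is not in the period spectrum of the geodesic flow. One technical wrinkle is that $E_{H,J}(\gamma)\to 0$ controls the loops only in the $W^{1,2}$ sense, so I would either upgrade to $C^1$ via elliptic/ODE bootstrapping along the loop or, more elementarily, note that $\|\dot\gamma - X_{H,\sigma}\|_{L^2}$ small together with the explicit form \eqref{eq:X_H_sigma} of $X_{H,\sigma}$ forces $|p(t)|$ to be nearly constant and $\dot q$ to nearly solve the magnetic geodesic equation, which suffices to extract the limiting closed geodesic.

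The main obstacle I anticipate is the compactness/limiting argument in \eqref{dfDess2}: making precise the sense in which loops of small $E_{H,J}$ near infinity are close to magnetic geodesics, handling the rescaling in the conical end carefully (the factor $e^{(r_1-r_0)/2}$ from item (3) means the relevant normalisation is the square-root of $|p|$, not $|p|$ itself), and ensuring the limit is genuinely a nonconstant geodesic of period exactly $a$ rather than a constant or a geodesic of some other period. The condition \eqref{dfDess1} I expect to be comparatively painless, being a soft consequence of bounded geometry and the quasi-isometry invariance recorded in Remarks \ref{rem:rescale1} and \ref{remObs}.
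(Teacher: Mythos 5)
Your proposal is correct, and for condition (a) of Definition \ref{dfDess} it coincides with the paper's argument: the Gromov metric for $(\omega_\sigma,J_H)$ is uniformly equivalent to the non-magnetic one (bounded geometry of the conical case, decay of $\pi^*\sigma$ and of $X_{H,\sigma}-X_H$ from Lemma \ref{lmMagPertbDec}), and uniform isoperimetry is stable under metric equivalence (Remark \ref{remObs}); the appeal to Remark \ref{rem:rescale1} to shrink $\|\pi^*\sigma\|_\infty$ is not needed here, since the decay of $\pi^*\sigma$ at infinity already gives equivalence with constant tending to one. Where you diverge is condition (b). The paper handles it by transfer at the level of flows: by Lemma \ref{lmMagPertbDec} the time-$1$ magnetic flow is $C^0$-close to the (reparametrised) geodesic flow near infinity, and the non-magnetic linear Hamiltonian already satisfies condition (b) when $a$ is off the period spectrum, via the displacement criterion recorded in the remark after Definition \ref{dfDess} (uniformly bounded Lyapunov time plus $d(\varphi^1_H(x),x)>\epsilon'$ outside a compact set). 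You instead argue directly on loops: negate the condition, use that radial linear Hamiltonians preserve $|p|$ to trap low-energy loops at radius $r_k\to\infty$, rescale by the Liouville flow (correctly normalising by $\sqrt{|p|}$, and using translation invariance of $X_H$ together with the decay of $X_{H,\sigma}-X_H$), and extract by Arzel\`a--Ascoli a nonconstant $1$-periodic orbit of $X_H$, i.e.\ a closed geodesic of period $a$ --- a contradiction. This is essentially the small-energy analogue of Lemma \ref{lem:slope_not_period_closed_geodesic} and the technical wrinkles you flag ($W^{1,2}$-to-$C^0$ extraction, nonconstancy of the limit since $X_H\neq 0$ at infinity) are routine. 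The trade-off: your route is more self-contained, re-proving the non-magnetic case rather than quoting it, at the cost of a longer compactness argument; the paper's route is shorter but leans on the displacement criterion and on the $\sigma=0$ case being established in \cite{Groman2015}.
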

We remark that the period spectrum of the geodesic flow is always a closed nowhere dense subset of $\mathbb{R}_+$ (see for instance \cite[Proposition 3.7]{Schwarz2000}).

\begin{proof}
We first prove the associated Gromov metric is uniformly isoperimetric. The latter property is implied by bounded geometry which for $\sigma=0$ is established in \cite[Example 5.19]{Groman2015}. Being uniformly isoperimetric is stable under metric equivalence. By Lemma~\ref{lmMagPertbDec} we have that $X_{H,\sigma}$ is arbitrarily close (with respect to a conical metric) to $X_H$. Moreover $\omega_\sigma$ becomes arbitrarily close to $\omega$. So the corresponding Gromov metrics are uniformly equivalent (in fact, the equivalence constant converges to unity).

It remains to show condition \ref{dfDess2} of Definition \ref{dfDess}. By Lemma~\ref{lmMagPertbDec} the time-1 magnetic flow becomes arbitrarily close to the geodesic flow.  By Remark \ref{remLyapunov} the geodesic flow satisfies the condition \ref{dfDess2} whenever the slope is not in its period spectrum.
\end{proof}
\begin{rem}
Note that it is \textit{essential} in the last argument that we are considering a \textit{linear} Hamiltonian. The time-$1$ magnetic orbit of a quadratic Hamiltonian bears no relation to the that of the ordinary geodesic flow.
\end{rem}

It follows from Theorem \ref{tmLinDiss} that if $\mathcal{H}_{\operatorname{lin}}$ is the collection of functions on $T^*N$ which, for some constants $a,b$ outside of a compact set coincide with the function $ae^r+b$ then the symplectic cohomology groups $ SH^*(T^*N ; \mathcal{H}_{\operatorname{lin}}  \colon \! \omega_{ \sigma})$ are well defined.
\subsection{The relation with Viterbo's symplectic cohomology}
First however let us clarify an important point. For this we temporarily denote the ``usual'' symplectic cohomology (as originally defined by Viterbo \cite{Viterbo1999}) of $T^*N$ with respect to the standard symplectic form $ \omega = - d \lambda$ by $SH_{\mathrm{Viterbo}}(T^*N \! \colon \!  \omega)$. A priori, it is not clear that our symplectic cohomology groups agree with $SH_{\mathrm{Viterbo}}(T^*N  \! \colon \! \omega)$. Nevertheless, this is indeed the case, as the following result shows.

\begin{thm}
Let $\mathcal{H}_{\operatorname{lin}}$ be the collection of functions on $T^*N$ which, for some constants $a,b$ outside of a compact set coincide with the function $ae^r+b$. Then
\[
SH^*(T^*N; \mathcal{H}_{\operatorname{lin}} \! \colon \! \omega)=SH_{\mathrm{Viterbo}}(T^*N  \! \colon \! \omega).
\]
\end{thm}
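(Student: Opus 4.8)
The plan is to compare the two direct systems defining $SH^*(T^*N;\mathcal{H},\omega)$ and $SH_{\mathrm{Viterbo}}(T^*N;\omega)$ and to show they are cofinal in each other, so that the direct limits agree together with the ring and $H^*(T^*N;\Lambda_\omega)$-module structures. The essential point is that both theories are built from Floer cohomology of Hamiltonians that are linear (in the radial coordinate $r = \ln|p|$, equivalently of the form $ae^r+b$) at infinity, so the two constructions differ only in (i) the admissible \emph{class} of Hamiltonians and almost complex structures at infinity, and (ii) the compactness mechanism used to define the differential and continuation maps. For (i), Viterbo's construction restricts to convex/admissible data and a contact-type condition at the boundary of a large disc bundle, while $\mathcal{H}$ is the full family of $ae^r+b$-at-infinity Hamiltonians together with conical $J$; but for any slope $a$ outside the period spectrum of the geodesic flow, Theorem~\ref{tmLinDiss} shows $(H,J)$ is dissipative for $\omega$, so $HF^*(H,J)$ is defined by our machinery, and by the usual no-escape/maximum-principle arguments it also coincides with the classical $HF^*$ of the corresponding admissible Hamiltonian on a large disc bundle. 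For (ii), Theorem~\ref{tmDissDaimEst} gives a priori $C^0$ bounds on Floer trajectories in terms of energy, which is exactly what the classical theory obtains from the integrated maximum principle; so on a common cofinal subsequence of Hamiltonians the two chain complexes, differentials, continuation maps and pair-of-pants products literally agree.

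Concretely I would proceed as follows. \textbf{Step 1:} Fix an increasing sequence of slopes $a_n\to\infty$ avoiding the (closed, nowhere dense) period spectrum of the geodesic flow, and for each $n$ choose $H_n\in\mathcal{H}$ equal to $a_n e^r + b_n$ outside a compact set, together with a conical $J_n$; by Theorem~\ref{tmLinDiss} these are dissipative, and $\{H_n\}$ is cofinal in $\mathcal{H}$, so $SH^*(T^*N;\mathcal{H},\omega)=\varinjlim_n HF^*(H_n)$. \textbf{Step 2:} Observe that the same sequence $\{H_n\}$ (after the harmless replacement of $e^r$ by a function that is literally linear in $r$, or after using that the disc bundle $D^*_RN$ with $R=e^{a}$ plays the role of the contact hypersurface) is also cofinal in the class of admissible Hamiltonians computing $SH_{\mathrm{Viterbo}}(T^*N;\omega)$, since Viterbo's groups are computed by any cofinal sequence of slopes tending to infinity. \textbf{Step 3:} For each fixed $n$, prove $HF^*(H_n,J_n)=HF^*_{\mathrm{Viterbo}}(H_n)$. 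This is the crux: one shows the two \emph{chain complexes} have the same generators (the $1$-periodic orbits of $X_{H_n}$, which all lie in a fixed compact set by Lemma~\ref{lem:slope_not_period_closed_geodesic}) and the same differential, because Floer trajectories in both settings are confined to a fixed compact set --- in our setting by Theorem~\ref{tmDissDaimEst}, in Viterbo's by the maximum principle --- and for conical/admissible data these confined trajectories are the same objects. \textbf{Step 4:} Check that the identifications in Step 3 are compatible with continuation maps (monotone homotopies of slopes, handled in both theories by the same confinement argument, cf.\ Theorem~\ref{tmContinuatiation}), hence pass to the direct limit to get an isomorphism $SH^*(T^*N;\mathcal{H},\omega)\cong SH_{\mathrm{Viterbo}}(T^*N;\omega)$ of graded vector spaces. \textbf{Step 5:} Upgrade to a ring isomorphism by noting the pair-of-pants product on both sides is defined by counting the same (confined) Floer sections over $\Sigma_{0,1,2}$, and likewise the unit and the $H^*(T^*N;\Lambda_\omega)$-module structure agree.

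\textbf{Main obstacle.} The genuine work is Step~3: reconciling the two \emph{definitions} of the Floer differential for a fixed linear-at-infinity Hamiltonian. One must check that the dissipative compactness of \cite{Groman2015} produces the same moduli spaces as the classical integrated-maximum-principle compactness --- i.e.\ that no ``extra'' or ``fewer'' trajectories appear --- and that the regularization/perturbation used in the dissipative framework (perturbing within a compact set, as in the proof of Theorem~\ref{tmContinuatiation}) can be chosen to match a regular choice in Viterbo's framework. Since both confinement results pin all relevant trajectories into one and the same compact region where the two setups are manifestly identical (conical $J$ is admissible there, and the Hamiltonian vector fields coincide), this is ultimately a bookkeeping matter, but it is where all the content sits; everything else (cofinality of slopes, behaviour under continuation, the product) is formal once Step~3 is in place. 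I would also remark that this comparison is really a special case of the general principle that the dissipative symplectic cohomology of a Liouville manifold agrees with Viterbo's, which is established in \cite{Groman2015}; the statement here is just the instance $M=T^*N$, and the proof can be quoted from there, with Theorem~\ref{tmLinDiss} supplying the one input specific to cotangent bundles, namely dissipativity of linear Hamiltonians.
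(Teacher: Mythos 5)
Your overall strategy---compare the two direct systems over the same class of linear-at-infinity Hamiltonians with conical $J$, match the complexes, then pass to the limit---is the same as the paper's, but you have the weight in the wrong place, and the step you declare formal is where the actual content (and in your write-up, a genuine gap) sits. Your Step 3 is not the crux: $SH_{\mathrm{Viterbo}}$ is already defined as a limit of Floer cohomologies of Hamiltonians from $\mathcal{H}$ with conical almost complex structures, and Theorem \ref{tmLinDiss} shows these very data are dissipative; so for fixed $(H,J)$ the two chain complexes are literally the same complex (same generators, same Floer equation, same $J$), and there is nothing to reconcile---compactness is simply proved twice for the same moduli spaces. The paper disposes of this in one sentence.

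The genuine issue is your Step 4. The continuation maps are defined in the two theories over \emph{different} classes of interpolating data: Viterbo uses monotone admissible families, i.e.\ $H_s=h_s(e^r)$ outside a compact set with $\partial_s(h'_s)\geq 0$, and these are \emph{not} automatically dissipative (the paper states this explicitly); conversely, the dissipative zig-zag datum built in the proof of Theorem \ref{tmContinuatiation} uses a cut-off in the distance function to a point and is not of Viterbo's admissible radial form, so the maximum principle does not apply to it. Hence ``the same confinement argument'' does not cover both sides, and the assertion that the continuation maps literally agree needs an argument. What is required---and what the paper supplies---is, for every regular pair $H_1\leq H_2$ in $\mathcal{H}$, a single interpolating family that is simultaneously admissible and dissipative, combined with the independence of each theory's continuation map of the choice within its own class; the paper produces such a family by rerunning the zig-zag construction of Theorem \ref{tmContinuatiation} with the cut-off function $\eta$ taken to be $e^{r/2}$ instead of a distance function, which makes the homotopy radial (hence admissible) while keeping it dissipative. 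Without this bridging construction (or an equivalent one) your Step 4, and with it the identification of the limits and of the ring structures, is unproven.
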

\begin{proof}
$SH_{\mathrm{Viterbo}}(T^*N  \! \colon \! \omega)$ is defined as a limit over Floer homologies defined using Hamiltonians from $\mathcal{H}_{\operatorname{lin}}$ and conical almost complex structures. These Floer data have been shown above to be dissipative, so to prove the claim it remains to show that the continuation maps on both sides coincide. In \cite{Viterbo1999} continuation maps are defined using families $H_s$ where $H_s=h_s(e^r)$ outside of a compact sets and $\partial_s(h'_s)\geq 0$. Call such a family admissible. Admissible families are not automatically dissipative, but to prove the claim it suffices to show that for any regular $H_1\leq H_2\in \mathcal{H}_{\operatorname{lin}}$ there is an admissible family which is also dissipative. For this, in the proof of Theorem \ref{tmContinuatiation} take the function $ \eta$ to be $e^{r/2}$ instead of the distance function to a point. The rest of the construction then creates a dissipative Floer datum which is also admissible.
\end{proof}

From now on whenever speaking of the symplectic cohomology of $T^*N$ (irrespective of the symplectic form) we implicitly use the semigroup $\mathcal{H}_{\operatorname{lin}}$, and thus omit it from our notation\footnote{This is now consistent with the notation from the Introduction.}.

\begin{rem}
Implicit in the notation $\mathcal{H}_{\operatorname{lin}}$ is a choice of a Riemannian metric $g$ on the base manifold $N$ (cf. the definition of linear Hamiltonians in Subsection \ref{sec:radial}). Let us momentarily make this dependence explicit and write $\mathcal{H}_{\operatorname{lin},g}$. Then the results of \cite{Groman2015} imply there is a canonical ring isomorphism 
\begin{equation}
SH^*(T^*N; \mathcal{H}_{\operatorname{lin},g_1} \! \colon \! \omega)=SH^*(T^*N; \mathcal{H}_{\operatorname{lin},g_2} \! \colon \! \omega)
\end{equation}
for any pair of Riemannian metrics $g_1,g_2$ on $N$. 
\end{rem}

\section{Deformation of Floer homology}
\label{sec:deformations}
\subsection{The extended Floer-Novikov complex}
\label{sec:extend}

As in Section \ref{sec:covers}, let $ \Gamma$ denote the image in $H_2(N ; \mathbb{Z}) \big/ \mathrm{Tor}$ represented by maps from spheres and tori. Suppose that $ \mathcal{K}$ is a (possibly degenerate) polytope in $H^2(N; \mathbb{R})$. We now define various geometric objects related to $\mathcal{K}$. The reader is forewarned that the notation will eventually get quite hairy -- hence we take it slow and introduce things step-by-step.

\begin{df}[\textbf{The group $\Gamma_\mathcal{K}$}]
The collection $  \left\{ I_k \mid k \in \mathcal{K} \right\} $ determines a polytope $ \Phi_{ \mathcal{K}} \subset H^1( \Gamma ; \mathbb{R})$. We abbreviate the quotient of $ \Gamma/\bigcap_{\phi \in \Phi_{\K}} \ker \phi$ by   $ \Gamma_{ \mathcal{K}}$. 
\end{df}

\begin{df}[\textbf{The ring $\Lambda_{\mathcal{K}}$}]
To each polytope $K\subset H^1( \Gamma ; \mathbb{R})$ we associate a ring $\Lambda_{\cK}$ defined as the set of formal sums
$$
\sum_{ \gamma \in \Gamma_{\cK}} c_{ \gamma} z^{ \gamma}
$$
where $c_{\gamma}\in\Lambda_{\operatorname{univ}}$, $z^{ \gamma}$ is formal, and  the terms $ c_{ \gamma} z^{ \gamma}$ satisfy the \textit{Novikov multi-finiteness condition}: 
\vspace{0.75em}

\begin{displayquote}
For all  $c >0$  and $\phi \in \Phi_{\K}$  the set 
$$
\left\{ \gamma \in \Gamma \mid c_{ \gamma} \ne 0, \ \phi(\gamma)+\val(r_{\gamma}) < c \right\} 
$$
is finite.
\end{displayquote}
\end{df}
\begin{rem}
The assignment $\cK\mapsto \Lambda_\cK$ is contravariantly functorial. 

We denote by $\cO$ the pre-sheaf $\cK\mapsto\Lambda_\cK$.
\end{rem}  

\begin{df}[\textbf{The covering spaces $\mathcal{L}_KT^*N$ and $\mathcal{L}_KN$}]
Let us denote by $ \mathcal{L}_{ \mathcal{K}}N$ and $ \mathcal{L}_{ \mathcal{K}}T^*N$ the abelian covers of $N$ and $T^*N$ with deck transformation group $ \Gamma_{ \mathcal{K}}$. By construction, these covers have the property that if $ [ \sigma] \in \mathcal{K}$ then the transgressed form $ \tau( \sigma)$ pulls back to an exact form on $ \mathcal{L}_{ \mathcal{K}}N$.
\end{df}

\begin{df}[\textbf{The module $V_{\mathcal{K},\sigma}$}]
Let $H$ be a Hamiltonian on $T^*N$. Denote by 
$$
\mathcal{P}_{\K}(H,\sigma)\subset \mathcal{L}_{ \mathcal{K}}T^*N
$$
the set of lifts of the periodic orbits of $H$ with respect to $\omega_{ \sigma}$. The action of $\Gamma_{\mathcal{K}}$ restricts to an action on $\mathcal{P}_{\K}(H,\sigma)$. We define
$$
V_{\cK,\sigma}\coloneqq  \mathbb{K}\langle \mathcal{P}_{\K}(H,\sigma)\rangle.
$$
This is a finitely generated free module over the group ring $\mathbb{K}[\Gamma_{\K}]$.
\end{df}
We can now start on the definitions of the relevant Floer complexes.
\begin{df}[\textbf{The Floer complex}]
Let $\sigma \in \Omega^2(N)$ be a closed two-form. We do \textbf{not} necessarily assume that $\sigma$ belongs to the polytope $\mathcal{K}$. Suppose $(H,J)$ is strongly dissipative with respect to some $\omega_{ \sigma}$. We define the \emph{extended Floer chain complex}\footnote{We should really include both the almost complex structure $J$ and the coefficient field $\mathbb{K}$ in our notation. This would, however, push the notation from being merely heavy to being utterly unreadable, and hence we hope the reader forgives us this little imprecision.} 
$$
 CF(H  \! \colon \! \mathcal{K}, \omega_\sigma ) 
 \coloneqq V_{\cK,\sigma}\otimes_{\mathbb{K}[\Gamma_{\K}]}\Lambda_{\cK}.
$$
Explicitly, elements of $ CF(H  \! \colon \! \mathcal{K}, \omega_\sigma )$ can be presented as possibly infinite sums $\sum c_i\otimes\gamma_i$ for $c_i\in\Lambda_{\operatorname{univ}}$ and  $\gamma_i\in \mathcal{P}_{\K}(H,\sigma)$, satisfying the following multi-finiteness condition: 
\vspace{0.75em}

\begin{displayquote}
Each $\sigma'\in\K$ gives rise to an action filtration $\mathcal{A}_{H,\omega_{ \sigma'}}$ on $V_{\cK,\sigma}$. We require that for any $\sigma'\in\K$ we have $\lim_{i\to\infty} \mathcal{A}_{H,\omega_{ \sigma'}}(\gamma_i)+\val(c_i)=-\infty.$ 
\end{displayquote}
\end{df}

\begin{rem}
For fixed choice of $\sigma$, the assignment $\cK\mapsto  CF(H  \! \colon \! \mathcal{K}, \omega_\sigma )$ is contravariantly functorial as a map to $\Lambda_{\operatorname{univ}}$-modules, and defines a presheaf of $\cO$-modules. 
\end{rem}

We would like to define a linear operator $d_{\sigma}= d_{H,J, \omega_\sigma}$ on $CF(H  \! \colon \! \mathcal{K}, \omega_\sigma )$ by counting Floer trajectories in the usual way. The Floer trajectories are defined with respect to the Hamiltonian flow determined by $\omega_{ \sigma}$. The only difference to the ordinary Floer complex is in the multi-finiteness condition determined by $\mathcal{K}$. In general, however, the differential $d_\sigma$ is only a formal sum of operators on $CF(H  \! \colon \! \mathcal{K}, \omega_\sigma )$ and is \textit{not} well defined as an operator from  $CF(H  \! \colon \! \mathcal{K}, \omega_\sigma )$ to itself. That is, having $x\in CF(H  \! \colon \! \mathcal{K}, \omega_\sigma )$  does not guarantee that the formal sum $d_{\sigma}(x)$ satisfies the multi-finiteness condition.  Our main result, Theorem \ref{tmMainIso}, gives conditions when it is. We will continue to refer to the $\Lambda_{\K}$-module $CF(H  \! \colon \! \mathcal{K}, \omega_\sigma )$ as the Floer complex, even though it only has a formal differential. \\

Given a pair of Floer data $(H_1,J_1)\preceq (H_2,J_2)$ and a dissipative Floer datum $\mathfrak{H}$ interpolating between them, an analogous discussion applies to the continuation map
$$
CF(H_1  \! \colon \! \mathcal{K}, \omega_\sigma )\to CF(H_2  \! \colon \! \mathcal{K}, \omega_\sigma ).
$$
Namely, each Floer solution $u$ defines a $\Lambda_{\K}$-module map $C_u$ but the formal sum $\Sigma_uC_u$ is not guaranteed to converge. 

Finally, \emph{if we restrict attention to the case where $\cK$ consists of a single point $\sigma'$}, which may or may not equal $\sigma$, we can similarly formally define the pair of pants product. Namely, we assign to each solution $u$ to equation \eqref{eqProdFloer} defined by the Hamiltonian flow with respect to $\omega_{\sigma}$ the operation
$$
P_u:CF(mH  \! \colon \! \{\sigma'\}, \omega_\sigma )\otimes CF(nH  \! \colon \! \{\sigma'\}, \omega_\sigma )\to CF((m+n)H  \! \colon \! \{\sigma'\}, \omega_\sigma )
$$
described by replacing $\sigma$ with $\sigma'$ in \eqref{eqProdWeight}. As in the case of the differential, the sum  $\Sigma _u P_u$ of all such solutions $u$ doesn't necessarily converge to an operator.

We are nearly ready to state our main result. But first, we need one further notational extension that allows us to speak of varying the polytope $\mathcal{K}$.

\begin{df}[\textbf{The polytope $\mathcal{Q}$}]
\label{def-Q}
Let $\mathcal{Q} \subset H^2(N; \mathbb{R})$ be a fixed ``master'' polytope. Choose a section 
\[
\sigma \colon \mathcal{Q} \to \Omega^2(N), \qquad k \mapsto \sigma_k.
\]
We then choose:
\begin{enumerate}[label=(\roman*)]
  \item A smaller (possibly degenerate) polytope $\mathcal{K} \subset \mathcal{Q}$;
  \item A fixed element $k \in \mathcal{Q}$. It is \textbf{not} required that $k \in \mathcal{K}$.
\end{enumerate}
This gives us the Floer complex $CF(H \! \colon \! \mathcal{K},\omega_{\sigma_k})$. In order to indicate the dependence on $\mathcal{Q}$ we sometimes write this as 
$$
CF(H \! \colon \! \mathcal{Q},\mathcal{K},\omega_{\sigma_k}).
$$
\end{df}

In fact, all of our applications require only a special case of Definition \ref{def-Q}, where $\mathcal{Q}$ is an interval. To help the reader wade through our notation, we spell this case out explicitly:

\begin{ex}[\textbf{Two ends of an interval}]
\label{ex-interval}
Fix $\sigma_0,\sigma_1 \in H^2(N; \mathbb{R})$ and let $\sigma_r := (1-r)\sigma_0 + r \sigma_1$.

\begin{equation}
\label{eq-Q-interval}
\mathcal{I} = \left\{ [\sigma_r] \mid 0\le r \le 1 \right\},
\end{equation}

and define $\sigma$ to be the obvious section
$$
\sigma \colon \mathcal{I} \to \Omega^2(N), \qquad r \mapsto \sigma_r.
$$
We then take
$$
\mathcal{K} = \{ [ \sigma_0] \}.
$$
Fix now $r \in [0,1]$ and consider the complex $CF(H \! \colon \! \mathcal{I} , \mathcal{K} , [\sigma_r] )$. The role of $\sigma_0$ is to single out a cover of the loop space and the action of the corresponding group of deck transformations on $\Lambda_{\operatorname{univ}}$. The role of $\sigma_r$ is to define the Hamiltonian flow. Thus the complex is generated by the $\omega_{\sigma_r}$-periodic orbits and the Floer trajectories are lifts of solutions to Floer's equation with respect to $\omega_{\sigma_r}$. Note in particular if $r = 0$ then 
\begin{equation}
  \label{eq-r0}
  CF^*(H,J \! \colon \! \mathcal{I}, \mathcal{K}, [\sigma_0]) = CF^*(H,J \! \colon \! \sigma)
\end{equation}
\end{ex}

\subsection{The main theorem}
In this section we state the main result of the present paper. The proof is deferred to Section \ref{sec:proof_main}.

\begin{thm}\label{tmMainIso}
Let $\mathcal{Q} \subset H^2(N;\R)$ be a possibly degenerate polytope. Let $\sigma \colon \mathcal{Q} \to\Omega^2(N)$ be a smooth section. Then there exists an assignment $ \K \mapsto \mathcal{H}_{\K}$, for $\K \subset \mathcal{Q}$ a possibly degenerate polytope, of a set  of Floer data $(H,J)$ such that $H$ is linear at infinity and $J$ is conical, which satisfy the following properties:
\begin{enumerate}
\item  \label{tmMainIsoA} For each $\K \subset \mathcal{Q}$ the set $\mathcal{H}_{\K}$ is $\preceq$-cofinal  in the set $\mathcal{H}_{\operatorname{lin}}$ of all Floer data that are linear at infinity.
\item  \label{tmMainIsoB} For any inclusion $\K' \subset\K $ we have $\mathcal{H}_{\K}\subset \mathcal{H}_{\K'}$.
\item  \label{tmMainIsoC} $\mathcal{H}_{\K}$ is open in the uniform convergence topology on the space of time-dependent Hamiltonians.
\item  \label{tmMainIsoD} For each $\mathcal{K}' \subset \mathcal{K} \subset \mathcal{Q}$ and for each $k \in \mathcal{K}$ the Floer differential $d_{\omega_k}$ is well-defined on the complex $CF(H \! \colon \! \mathcal{K}, \mathcal{K}', \sigma_k)$. 
\item \label{tmMainIsoE} For each $(H,J)\in\mathcal{H}_{\K}$ there exists a chain homotopy equivalence
\[
  f_{k_1k_2} \colon CF(H \! \colon \! \K,k_1)\to CF(H \! \colon \! \K,k_2)
\]
for any choice $k_1,k_2\in \mathcal{Q}$. 
Moreover, the map $f$ is well defined upon restricting $\K\to\K'$ for any  $\K'\subset \mathcal{K}$ and induces a chain homotopy equivalence
\[
  CF(H \! \colon \! \K',k_1)\to CF( H \! \colon \! \K',k_2).
\]
\item\label {tmMainIsoF} 
Given a pair $H_1\preceq H_2\in\mathcal{H}_{\K}$ there exist dissipative continuation data  for which the corresponding continuation map is well defined. They induce maps on the homologies 
$$HF^*(H_1 \! \colon \! \mathcal{K}, \mathcal{K}', \sigma_k)\to  HF^*(H_2 \! \colon \! \mathcal{K}, \mathcal{K}', \sigma_k).$$ These continuation maps fit into a commutative diagram

$$
\xymatrix{HF^*(H_1 \! \colon \! \mathcal{K}, \mathcal{K}', \sigma_k)\ar[d]^{f_{k_1,k_2,*}} \ar[r] & HF^*(H_2 \! \colon \! \mathcal{K}, \mathcal{K}', \sigma_k)\ar[d]^{f_{k_1,k_2,*}}\\
HF^*(H_1 \! \colon \! \mathcal{K}, \mathcal{K}', \sigma_k)\ar[r]&HF^*(H_2 \! \colon \! \mathcal{K}, \mathcal{K}', \sigma_k).}
$$
\item \label{tmMainIsoG}  Restricting attention to the case where $\K=\{k'\}$ is a singleton there is a well defined pair of pants product at the homology
$$HF^*(H \! \colon \! \mathcal{K}, \{k'\}, \sigma_k)\otimes HF^*(H \! \colon \! \mathcal{K}, \{k'\}, \sigma_k) \to HF^*(H' \! \colon \! \mathcal{K}, \{k'\}, \sigma_k)$$
where $H'$ is any Hamiltonian in $\cH_{\K}$ such that $2H\preceq H'$. 
The maps $f_{k_1,k_2,*}$ on homology commute with this product.
\end{enumerate}
\end{thm}

Part \ref{tmMainIsoE} of the Theorem states that \emph{for the Hamiltonian $(H,J)\in\cH_{K}$ there Floer cohomology with respect to $\omega_0$ can be computed using Floer's equation for $\omega_1$}. In particular, this says that the complexes $CF(H \! \colon \! \mathcal{I} , [ \sigma_0] , [\sigma_r] )$ from Example \ref{ex-interval} satisfy:
\vspace{0.75em}

\begin{displayquote}
The complexes $CF(H \! \colon \! \mathcal{I} , [ \sigma_0] , [\sigma_r] )$ are, up to chain \\  homotopy equivalence, independent of $r$.
\end{displayquote}
\vspace{0.75em}

Note this only holds for Hamiltonians in $\cH_{\mathcal{I}}$. In general we do not expect such an isomorphism for arbitrary Hamiltonians.

\subsection{Twisted symplectic cohomology}

We recall the definition of symplectic cohomology twisted by a transgressed form from \cite{Ritter2013}. \emph{In this section we deal exclusively with the twisted symplectic cohomology of the Liouville form $\omega_0$.} This is all we shall need in order to make contact with Ritter's result \cite{Ritter2009}. Moreover, this will allow us to not worry about convergence issues. 
For each $\sigma\in H_2(T^*N)$ consider the local system $\mathscr{C}_{\tau(\sigma)}$ introduced in Section \ref{SecCoefficients}. Given a Floer datum $(H,J)$ with $H$ linear at infinity the the underlying module of the $\mathscr{C}_{\tau(\sigma)}$-twisted Floer complex is 
\begin{equation}
CF^*(H,J \! \colon \! \omega_0 \,;\mathscr{C}_{\tau(\sigma)})=\Lambda_{\operatorname{univ}}\langle\mathcal{P}(H)\rangle.
\end{equation}
That is, the free $\Lambda_{\operatorname{univ}}$-module generated by the periodic orbits of $H$ graded by the Conley-Zehnder index. A Floer trajectory $u$ of index difference $1$ going from a periodic orbit $\gamma_1$ to a periodic orbit $\gamma_2$ contributes the term $T^{\tau_{\sigma}(u)}\gamma_2$ to $\gamma_1$. Observe that since we are considering Floer's equation for the Liouville form \emph{the set of all Floer trajectories emanating from a given periodic orbit $\gamma_1$ is finite, regardless of any additional energy assumptions.} Thus, the Floer differential is automatically well defined. 

It is shown in  \cite{Ritter2013} that
\begin{itemize}
\item 
The twisted differerential squares to $0$. 
\item Defining twisted continuation maps similarly by weighting the continuation trajectories by $T^{\alpha_{\sigma}(u)}$, the resulting maps are chain maps, and the induced maps on homology are functorial and independent of choice of continuation datum.
\item The twisted pair of pants product defined  by weighting each pair of pants $u$ by $T^{\alpha_{\sigma}(u)}$ satisfies the Leibnitz rule. 
\end{itemize} 
This gives rise to twisted symplectic cohomology
$$
SH^*(T^*N)_{\tau(\sigma)} \coloneqq \varinjlim HF^*(H \! \colon \! \omega_0 ; \mathscr{C}_{\tau(\sigma)})
$$
where the limit is taken over $\mathcal{H}_{\operatorname{lin}}$. We then have the following result of Ritter.
\begin{thm}[Ritter \cite{Ritter2009}]\label{tmRitter}
$SH^*(T^*N)_{\tau(\sigma)}=H_{n-*}(\mathcal{L}N)_{\tau(\sigma)\otimes\tau(w_2)}$
\end{thm}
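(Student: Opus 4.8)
The plan is to reproduce Ritter's argument \cite{Ritter2009}: twist, at the chain level, the Viterbo/Abbondandolo--Schwarz isomorphism $SH^*(T^*N;\omega)\cong H_{n-*}(\mathcal{L}N)$ by the class $\tau(\sigma)\in H^1(\mathcal{L}N;\mathbb{R})$, and check that all the weights propagate consistently. First I would recall the untwisted statement in a chain-level form: for a Hamiltonian $H$ of the appropriate growth and generic $J$ there is a chain map $\Phi\colon CF^*(H,J)\to CM_{n-*}(\mathcal{S})$, where $\mathcal{S}$ is the (Lagrangian) action functional on $\mathcal{L}N$ Legendre-dual to $H$, obtained by counting half-cylinders $u$ on $(-\infty,0]\times S^1$ asymptotic at $-\infty$ to a Floer orbit and with $u(0,\cdot)$ in the unstable manifold of a critical point of $\mathcal{S}$; $\Phi$ is a chain isomorphism (Abbondandolo--Schwarz, Salamon--Weber), $HM_*(\mathcal{S})\cong H_*(\mathcal{L}N)$, and one passes to the limit over $H$ using that linear Hamiltonians are cofinal and compute the same $SH^*$ (as established earlier in this paper, cf.\ the comparison with $SH_{\mathrm{Viterbo}}$). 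For the twisted version it is convenient to lift $\Phi$ to the covers $\mathcal{L}_\sigma T^*N$ and $\mathcal{L}_\sigma N$ with deck group $\Gamma_\sigma$, so that $\Phi$ becomes a $\Gamma_\sigma$-equivariant map of $\mathbb{K}[\Gamma_\sigma]$-modules.

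Next I would introduce the twisting on both sides. On the Floer side $CF^*(H,J;\Lambda_\sigma)=CF^*(H,J)\otimes_{\mathbb{K}[\Gamma_\sigma]}\Lambda_\sigma$, with the differential weighting each Floer cylinder $u$ by $e^{\int u^*\omega_\sigma}$; on the loop-space side the $\tau(\sigma)$-twisted Morse complex of $\mathcal{S}$ weights each negative gradient line $\gamma$ by $e^{-\int\gamma^*\tau(\sigma)}$ and computes $H_*(\mathcal{L}N)_{\tau(\sigma)}$. Since $\lambda$ is a globally defined primitive, $\int_{\T^2}w^*d\lambda=0$ for any $w\colon\T^2\to T^*N$, so $[\omega_\sigma]$ and $[\pi^*\sigma]$ agree on the torus classes $\Gamma$ and the Novikov rings literally coincide, $\Lambda_{\omega_\sigma}=\Lambda_{\pi^*\sigma}=\Lambda_\sigma$ (as already noted in Section \ref{sec:covers}); moreover the two twisted complexes are isomorphic via the standard rescaling $x\mapsto e^{\pm\int x^*\lambda}x$ of generators. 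Hence I would replace $\omega_\sigma$ by $\pi^*\sigma$ as the twisting form throughout.

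The crux is to show $\Phi$ intertwines the twisted differentials. Extend $\Phi$ to the twisted complexes by weighting a contributing half-cylinder $u$ together with the trailing Morse half-trajectory $\gamma$ by $e^{\int u^*\pi^*\sigma}\,e^{\int\gamma^*\tau(\sigma)}$. The key identity is that for any cylinder (or half-cylinder) $u$ in $T^*N$, projecting to the path $s\mapsto \pi(u(s,\cdot))$ in $\mathcal{L}N$, the transgression formula \eqref{eq:transgression} gives
\[
\int u^*\pi^*\sigma=\int (\pi\circ u)^*\sigma=-\int_{\mathbb{R}}\tau(\sigma)_{\pi(u(s,\cdot))}\bigl(\partial_s\pi(u(s,\cdot))\bigr)\,ds,
\]
so the symplectic weight of $u$ is exactly (the exponential of) the holonomy of $\tau(\sigma)$ along the projected path. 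Thus the weight attached to a $(u,\gamma)$ pair is precisely the local-system coefficient $\mathcal{C}_{\tau(\sigma)}$ of the concatenated path from the lifted Floer orbit to the lifted critical point of $\mathcal{S}$; in particular $\Phi$ is $\Gamma_\sigma$-equivariant. Because the exponential weight of a broken/glued configuration is the product of the weights of its pieces, the untwisted chain-map identity, the untwisted chain homotopies exhibiting $\Phi$ as a quasi-isomorphism, and compatibility with continuation and pair-of-pants maps all carry over verbatim to the twisted setting; convergence of the weighted sums in $\Lambda_\sigma$ follows from the Novikov multi-finiteness built into $\Lambda_\sigma$ together with the a priori energy bounds on the relevant moduli spaces (Theorem \ref{tmDissDaimEst} and the index/energy relation).

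Finally I would account for the Stiefel--Whitney correction. Coherently orienting the Floer moduli spaces of $T^*N$ requires a (relative) spin structure, and comparing these orientations with those on the Morse side of $\mathcal{L}N$ introduces a sign discrepancy measured by $\tau(w_2(N))$ (Kragh; see also \cite{Abouzaid2017}); over a field of characteristic $2$ it is invisible. Combining the $\sigma$-twist with the $w_2$-twist, and using that twisting by a sum of degree-one classes is the tensor product of the corresponding local systems, yields $SH^*(T^*N;\Lambda_\sigma)\cong H_{n-*}(\mathcal{L}N)_{\tau(\sigma)\otimes\tau(w_2)}$; the same weight-matching applied to the pair-of-pants moduli shows the isomorphism intertwines the product with the (twisted) Chas--Sullivan loop product, as recorded in \cite{Ritter2013}. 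The hard part is the third step: verifying that the twisting behaves well through all the analytic interpolations in the Abbondandolo--Schwarz comparison (the change of growth type of $H$, the adiabatic/metric degeneration, the hybrid moduli spaces) and that the weighted counts genuinely converge in the possibly infinitely generated Novikov module $\Lambda_\sigma$ when $\sigma$ is not weakly exact.
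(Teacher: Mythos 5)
This theorem is not proved in the paper at all---it is quoted as an external result from Ritter \cite{Ritter2009} (with the statement adjusted by the $w_2$-correction later identified by Kragh and Abouzaid), so there is no internal argument to compare yours against. Your sketch---twisting the chain-level Viterbo/Abbondandolo--Schwarz isomorphism by the holonomy of $\tau(\sigma)$, replacing $\omega_\sigma$ by $\pi^*\sigma$ since the two agree on sphere and torus classes, and matching the Floer weights with the local system $\mathcal{C}_{\tau(\sigma)}$ via the transgression identity---is essentially Ritter's own argument, i.e.\ the same approach as the cited source rather than a genuinely different route.
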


In order to combine Ritter's Theorem with our main Theorem to prove Theorem \ref{thm:main} from the introduction we shall need the following variant of Eilenberg's Theorem \cite[Theorem VI.3.4]{Whitehead1978}.
\begin{pr}\label{prEilenberg}
Let $\mathcal{I}$ be the interval with endpoints $\sigma_0 = \sigma$ and $\sigma_1 = 0$, as in Example \ref{ex-interval}. Let $(H,J) \in \mathcal{H}_{\mathcal{I}}$. Then there is an a isomorphism of chain complexes over $\Lambda_{\operatorname{univ}}$
\begin{equation}
\label{eq-IC}
CF(H,J \! \colon \! \mathcal{I},[\sigma],[0]) \cong CF^*(H,J \! \colon \! \omega_0 \,;\mathscr{C}_{\tau(\sigma)}). 
\end{equation}
Given a pair $(H_1,J_1)\leq (H_2,J_2)$ and a dissipative homotopy from $(H_1,J_1)\to(H_2,J_2)$, the induced continuation map is intertwined by the isomorphism of \eqref{eq-IC}. Similarly, given a choice of Floer data for the pair of pants product and the unit, the corresponding operations are intertwined by the  isomorphism of \eqref{eq-IC}.
\end{pr}
\begin{proof}

For the readers' convenience we unwind the definitions which are scattered throughout this section. As a $\Lambda_{univ}$-module, the left hand side is 
$$\mathbb{K}\left\langle \mathcal{P}_{[\sigma_1]}(H,\omega_0)\right\rangle\otimes_{\Gamma_{[\sigma_1]}}\Lambda_{[\sigma_1]}$$
where:
\begin{itemize}
\item $\mathcal{L}_{[\sigma_1]}T^*N$ is the abelian cover of $\mathcal{L}T^*N$ corresponding to the $2$-form $\sigma$ (which, by transgression, is a $1$-form on the loop space). 
\item   $\mathcal{P}_{[\sigma_1]}$ is the lift of the set of $1$- periodic orbits of  the Hamiltonian $H$ \emph{with respect to $\omega_0$} to the covering space $\mathcal{L}_{[\sigma_1]}T^*N$.
 \item $\Gamma_{[\sigma_1]}$ is the group of deck transformations of the cover $\mathcal{L}_{[\sigma_1]}T^*N$.
 \item $\Lambda_{[\sigma_1]}$ is the completion of the group algebra of $\Lambda_{univ}[\Gamma_{[\sigma_1]}]$ with respect to the filtration induced by combining the valuation of $\Lambda_{univ}$ with the valuation  induced by $\sigma_1$ on $[\Gamma_{[\sigma_1]}]$. 
\end{itemize} 

From this description it follows that the left hand side is generated over the universal Novikov ring by elements $\tilde{\gamma}:=(\gamma, [u])$ consisting of 
\begin{itemize}
\item a periodic orbit $\gamma$ of the Hamiltonian $H$ \emph{with respect to $\omega$}, 
\item an equivalence class $[u]$ of paths from the base loop $\gamma_0$ to $\gamma$. The equivalence relation defined by modding out by the kernel of $I_{\sigma_1}$. 
\end{itemize}
These generators are subject to the relation 
$$(\gamma, [u\#w])\sim T^{\tau_\sigma(w)}(\gamma,[u])$$
for $w$ any closed loop in the loop-space. 

The right hand side has been described in detail in this subsection.

We define a map from the left hand side to the right hand side by mapping $\psi:T^r\tilde{\gamma}=T^r(\gamma, [v])\mapsto T^{r+\tau_{\sigma}(v)}\gamma$. This map is well defined and commutes with the action of $\Lambda_{\operatorname{univ}}$ on each side.  Moreover we can invert $\psi$ as follows. Chose arbitrarily for each $\gamma\in\mathcal{P}_{\omega}(H)$ a homotopy class $[u_0]$ to $\gamma$ from the base path in the homotopy class of $\gamma$. Then define the map $\phi$ from the left hand side to the right hand side by taking $T^r\gamma\mapsto T^{r-\tau_{\sigma}(v_0)}(\gamma,[v_0])$. The map $\phi$, which is independent of all choices, is clearly inverse to $\psi$ from both the left and right.  

The map $\psi$ is a chain map. Indeed, given a Floer trajectory $u$ from a periodic orbit $\gamma_0$ to a periodic orbit $\gamma_1$ it acts on the left by mapping an element $(\gamma_0, [w])$ to $(\gamma_1,[w\#u])$, while on the right it acts by mapping $\psi(\gamma_0,[w])=T^{\tau_{\sigma}(w)}\gamma_0$ to $T^{\tau_{\sigma}(u)+\tau_{\sigma}(w)}\gamma_1=\psi(\gamma_1,[w\#u])$. 

That $\psi$ intertwines the continuation map follows in the same way. That $\psi$ intertwines the product follows from the definition of the product in \eqref{eqProdWeight}.

\end{proof}

\begin{rem}
The careful reader will have noted that Part \ref{tmMainIsoF} of Theorem \ref{tmMainIso} does not contain a statement concerning functoriality of the continuation maps and independence of the choice of dissipative datum. While these are certainly true, a detailed proof is rather tedious to spell out. For our main application the functoriality and independence follow by the discussion preceding Theorem \ref{tmRitter} and the last proposition. 
\end{rem}

Combining all the above we have

\begin{thm}\label{tmMain}
For $H\in\mathcal{H}_\K$ and $\omega$ the canonical symplectic form we have
\begin{equation}\label{eqtmMain1}
HF^*(H \!  \colon \! \omega_\sigma) \cong HF^*(H \! \colon \! \omega \,;\mathscr{C}_{\tau(\sigma)})
\end{equation}
This map is natural with respect to continuation maps and commutes with the product. 
In particular, we have isomorphisms of rings
\begin{equation}\label{eqtmMain2}
SH^*(T^*N  \! \colon \! \omega_{\sigma})=SH^*(T^*N)_{\tau(\sigma)}.
\end{equation}
\end{thm} 
\begin{proof}

We apply Example \ref{ex-interval} with $\sigma_0 = \sigma$ and $\sigma_1 = 0$. As already remarked, part \ref{tmMainIsoE} of Theorem \ref{tmMainIso} produces a chain homotopy equivalence
$$
CF^*(H,J \! \colon \! \sigma) \stackrel{\eqref{eq-r0}}{\cong}  CF(H,J \! \colon \! \mathcal{I},[\sigma],[0]).
$$
This produces an isomorphism 
\begin{equation}
HF^*(H \! \colon \! \omega_{\sigma})\simeq HF^*(H \! \colon \! \mathcal{I}, [\sigma],[0]).
\end{equation}
The latter isomorphism commutes with continuation maps in $\cH_{\cK}$ and the pair of pants product according to Parts \ref{tmMainIsoF} and \ref{tmMainIsoG} of Theorem \ref{tmMainIso}. 
On the other hand, according to Proposition \ref{prEilenberg} we have 
$$
HF(H \! \colon \! \mathcal{I},[\sigma],[0]) \simeq HF^*(H \! \colon \! \omega;\mathscr{C}_{\tau(\sigma)})
$$
which again commutes with continuation maps and the Floer theoretic operations. 
Combining the two gives the isomorphism \eqref{eqtmMain1} and shows that it is natural with respect to continuation maps and commutes with operations.

To prove \eqref{eqtmMain2}, we rely on parts \ref{tmMainIsoA} and \ref{tmMainIsoC} of Theorem \ref{tmMainIso}  to pick a sequence $(H_i,J_i)\in\cH_{\mathcal{I}}$ which consists of regular Floer data and which is cofinal in the set $\mathcal{H}_{\operatorname{lin}}$ of all Hamiltonians that are linear at infinity. By naturality of  \eqref{eqtmMain1} we obtain the isomorphism of $\Lambda_{\operatorname{univ}}$-algebras
\begin{align}
SH^*(T^*N \! \colon \! \omega_{\sigma})&=\varinjlim HF^*(H_i  \colon \! \omega_{\sigma})\notag\\
&= \varinjlim HF^*(H_i  \colon \! \omega,\mathscr{C}_{\tau(\sigma)}) \notag\\
&=SH^*(T^*N)_{\tau(\sigma)}.\notag \qedhere
\end{align}

\end{proof}
\begin{proof}[Proof of Theorem \ref{thm:main}]
Theorem \ref{thm:main} is a consequence of Theorems \ref{tmRitter} and \ref{tmMain}.
\end{proof}

\section{Proof of the main theorem}
\label{sec:proofs}
\subsection{Continuation trajectories fixing $H$ and varying $\sigma$}
Consider a family $(\sigma_s)_{s\in[0,1]}$ of magnetic forms connecting a pair of magnetic forms $\sigma_0,\sigma_1$. 
We extend this family to all of $\mathbb{R}$ by setting $\sigma_s = \sigma_0$ for $s <0$ and $ \sigma_s = \sigma_1$ for $s>1$. Let $\sigma'_s \coloneqq \frac{\partial}{\partial r}\big|_{r = s} \sigma_r$.

Fix a critical point $[\gamma^i,w^i]$ of the functional $\mathcal{A}_{H,\sigma_i}$ for $i=0,1$ respectively (see \eqref{eqActionDef}), and fix $J = \{ J_{s,t} \}_{(s,t)\in\R\times S^1}$ such that $J_{s,t}$ is independent of $s$ for $|s|\gg 1$ and $J_{s,t}$ is compatible with $\omega_{\sigma_0}$ for $s$ near $-\infty$ and with $\omega_{\sigma_1}$ near $\infty$.
We denote by
\[
\mathcal{M}([\gamma^0,w^0],[\gamma^1,w^1];H, (\sigma_s)_{s\in [0,1]} ,J)
\]
the set of smooth maps $u:\R\times S^1 \to T^*M$
which satisfy the parametrised Floer equation
$$
\partial_{s}u+J_{s,t}(u)(\partial_{t}u-X_{H,\sigma_s}(u,t))=0,
$$
submit to the asymptotic conditions
$$
\lim_{s\to-\infty}u(s,t)=\gamma^0(t), \quad \lim_{s\to\infty}u(s,t)=\gamma^1(t),\quad \lim_{s\to\pm\infty}\partial_{s}u(s,t)=0,
$$
and such that $w^1=u\#w^0$. The energy of $u$ is defined via the $s$-dependent metric determined by $\omega_{\sigma_s}$ and $J_{\sigma_s}$. Namely,
\begin{equation}\label{eqsdepEnergy}
E(u) = E_{H,J}(u):=\int_{-\infty}^{\infty} \|\partial_{s}u\|_{\omega_{\sigma_s},J_{\sigma_s}}^2\,dt.
\end{equation}
 By a standard identity for time-dependent gradient flow, we have the following equation for any $u\in \mathcal{M}([\gamma^0,w^0],[\gamma^1,w^1];H, (\sigma_s)_{s\in [0,1]} ,J)$:
\begin{equation}\label{eqActionEnergyIdentity}
E(u) =\mathcal{A}_{H,\sigma_0}([\gamma^0,w^0])-\mathcal{A}_{H,\sigma_1}([\gamma^1,w^1])-\int_{-\infty}^{\infty}\left(\frac{\partial}{\partial s}\mathcal{A}_{H,\sigma_s}\right)(u(s))ds.
\end{equation}
Write $w^s:=w^0\#(u|_{(-\infty,s]\times S^1})$. The last term in the energy identity can be rewritten as
\begin{equation}\label{eq_error_term}
\int_{-\infty}^{\infty}\left(\frac{\partial}{\partial s}\mathcal{A}_{H,\sigma_s}\right)(u(s))ds=\int_{-\infty}^{\infty}\left(\int{(\pi\circ w^s)}^*\sigma_s' \right)ds
\end{equation}
Control of the last expression is the key to establishing compactness of the moduli spaces and thus the well-definedness of continuation maps. But first, we must show that energy control indeed guarantees compactness.
\begin{thm}\label{tmEneCompCont}
Let $J_s$ be roughly conical at infinity and let $(H,J_i)$ be strongly dissipative for both $\omega_{\sigma_0}$ and $\omega_{\sigma_1}$. For any $E>0$ the moduli space  
$$
\mathcal{M}_E([\gamma^0,w^0],[\gamma^1,w^1];H, (\sigma_s)_{s\in [0,1]} ,J)
$$
of continuation trajectories of energy $\leq E$ is compact in the sense of Gromov-Floer.
\end{thm}
\begin{proof}
We first show there is a compact set $K_E\subset T^*M$ such that any continuation trajectory
$$
u\in \mathcal{M}_E([\gamma^0,w^0],[\gamma^1,w^1];H, (\sigma_s)_{s\in [0,1]} ,J)
$$
 maps into $K_E$.

Each component of the complement of a slight thickening of $[0,1]\times S^1$ is contained in an a priori compact set by strong dissipativity. See Remark \ref{rmRobust}. Let $\tilde{J}$  denote the almost complex structure on $M\times [-\epsilon,1+\epsilon]\times S^1$ given at a point $(x,s,t)$ by $J_{X_H,\sigma_s}$. The latter is the almost complex structure produced by applying the Gromov trick to the almost complex structure $J_s$ and the vector field $X_{H,\sigma_s}$. Denote by $\tilde{\omega}_H$ the \emph{non-closed} $2$-form given at $(x,s,t)$ by $\omega_{\sigma_s}+dH\wedge dt+ds\wedge dt$. Then $\tilde{J}$ is compatible with $\tilde{\omega}_H$. For any compact measurable subset $S\subset \R\times S^1$ equation \eqref{eqEnergyIdArea} produces the relation
\begin{equation}
E_{\tilde{\omega}_H}(\tilde{u};S ):=\int_S\tilde{u}^*\tilde{\omega}_H=E_{H,J}(u)+ \operatorname{Area}(S).
\end{equation} 
Indeed, the derivation of \eqref{eqEnergyIdArea} is pointwise and does not use closedness of the symplectic form.

Outside of a large enough compact set we have that $|\omega_{\sigma_{s}}-\omega_{\sigma_{0}}|$ is arbitrarily small. Thus the \emph{closed} $2$-form $\tilde{\omega}':=\omega_{\sigma_{0}}+ds\wedge dt$ tames $\tilde{J}$ there. The metric $g_{\tilde{\omega}',\tilde{J}}$ determined by $\omega_{\sigma_0}+ds\wedge dt$ and $\tilde{J}$ is equivalent on the one hand to the metric determined by $\omega_{\sigma_0}$ and $J_{X_H,\sigma_0}$ and on the other hand to the metric determined by $\tilde{\omega}_H$ and $\tilde{J}$. 

It follows that $g_{\tilde{J}}$ is uniformly isoperimetric. Then, denoting by $S\subset \R\times S^1$ the region $S=[-\epsilon,1+\epsilon]\times S^1\cap u^{-1}(M\setminus K)$, an estimate on $E_{\tilde{\omega}_H}(\tilde{u};S )$ produces an estimate  on $E_{\tilde{\omega}'}(u)$.  The  monotonicity argument thus implies an a priori estimate on $u([-\epsilon,1+\epsilon]\times S^1)$.

Having established that $u$ is contained inside an a priori compact set, we proceed to establish Gromov compactness for solutions contained in a compact set $K$. For this we remind first that that Gromov compactness is domain local. See  \cite[\S4.6]{McDuffSalamon2004}. On the components of the complement of $[0,1]\times S^1\subset\R\times S^1$ we have the standard Floer equation, and so Gromov-Floer compactness holds there as usual. To conclude, it suffices to show that for $\epsilon>0$ small enough, Gromov compactness holds for subsets $(s-\epsilon,s+\epsilon)\times S^1$ for all $s\in [0,1]$. Now, for fixed $\epsilon>0$ small enough, we have that for each $s$, the form $\tilde{\omega}_{H,\sigma_s}$ tames the almost complex structure $J_{H,\sigma_{s'}}$ for all $s'\in (s-\epsilon,s+\epsilon)$ \emph{everywhere in $T^*N$}.  Applying the Gromov trick and the discussion of energy of the previous paragraph we have that the set of all restrictions to $(s-\epsilon,s+\epsilon)\times S^1$ of all $u\in  \mathcal{M}_E$ is a set of $\{J_{H,\sigma_{s'}}\}_{s'\in (s-\epsilon,s+\epsilon)}$ holomorphic maps with a priori energy bound. We have found a finite open cover of the domain $\R\times S^1$ so that compactness holds for the restriction of solutions to each element of the cover. Compactness for $\mathcal{M}_E$ follows.

\end{proof}
\subsection{A priori energy estimates}
We now specialise to the following case. Let $f \colon  \R\to[0,1]$ be smooth and satisfy $f(s)=1$ for $s\geq1$ and $f(s)=0$ for $s\leq0$. Let  
\begin{equation}\label{eqMagneticFormInterpolation}
\sigma_s=f(s)\sigma+\sigma_0,
\end{equation}
and let $J_{\sigma_s}$ be a family of $\omega_{\sigma_s}$-compatible almost complex structures which are roughly conical at infinity.  The right hand of \eqref{eq_error_term} specialises to
\[
e=\int_{-\infty}^{\infty}f'(s)\left(\int{w^s}^*(\pi^*\sigma) \right)ds.
\]
Write $u^s:=u|_{(-\infty,s)\times S^1}$. Then we have
\begin{align}\label{eqActnEst}
|e|&\leq \int_0^1|f'(s)|\left(\left|\int w^{0*}(\pi^*\sigma)\right|+\left|\int u^{s*}(\pi^*\sigma)\right|\right)ds\\
&\leq \left|\int w^{0*}(\pi^*\sigma)\right|+\sup_{s\in[0,1]}\left|\int_ {(-\infty,s)\times S^1}u^*(\pi^*\sigma)\right|.\notag
\end{align}
The first term in the rightmost expression may seem strange at first sight, but it accounts for the fact that the action difference is not invariant under shifts of the base loop. This will turn out to be harmless as Floer's equation is invariant under such shifts. We need to control the rightmost term. In general this cannot be achieved. However, we carefully construct  a family of Floer data which is  $\preceq$-cofinal in the set of Hamiltonians of the form $a|p|+b=ae^r+b$ for which we have such control is possible. The construction is as follows.

A choice of Riemannian metric $g$ on $N$  fixes a radial coordinate on $T^*N$. Let $\ell_0$ be the period of the shortest non-magnetic geodesic on $(T^*N,g)$.  For any  $a\in\R_+$ in the complement of the period spectrum of the geodesic flow, let $h_a:\R_+\to\R_+$ be a  smooth monotone increasing function such that 
\begin{itemize}
\item $h$ is $C^2$-small on $[0,1]$, 
\item $h_a'=\frac1{2}\ell_0$ on the interval $[1,2]$, and
\item $h'_a$ increases to $a$ on $[2,\infty)$.
\end{itemize}
See Figure \ref{fig:ha}. We then further introduce another parameter $\rho$ to the function $h_a$. Namely, for $\rho >0$ we define functions $h_{a,\rho}$ by requiring that
\[
h_{a,\rho}'(t):=\begin{cases} h_a'(t),\qquad & t\in[0,1],\\
              \frac1{2}\ell_0,\qquad& t\in[1,2e^\rho],\\
              h_a'(t/e^\rho),\qquad &t\in(2e^\rho,\infty).
       \end{cases}
\]
Like $h_a$, the function $h_{a,\rho}$ has slope $a$ at infinity. The difference is that $h_{a,\rho}$ has slope $\le \ell_0$ on $[0,2e^\rho]$ instead of $[0,2]$. We refer to $\rho$ as the \textbf{delay} parameter. See Figure \ref{fig:ha2}.

\tikzset{every picture/.style={line width=0.75pt}}       
\begin{figure}[h]
  \centering
  \resizebox{10cm}{!}{
\begin{tikzpicture}[x=0.75pt,y=0.75pt,yscale=-1,xscale=1]
\draw  (2,575.74) -- (494,575.74)(50.71,160) -- (50.71,626.06) (487,570.74) -- (494,575.74) -- (487,580.74) (45.71,167) -- (50.71,160) -- (55.71,167)  ;
\draw    (51.2,575.16) .. controls (96.37,564.3) and (137.37,573.3) .. (152.37,568.3) ;
\draw    (250,529.06) -- (152.37,568.3) ;
\draw    (250,529.06) .. controls (359,456.06) and (458,215.06) .. (479,160.06) ;
\draw  [dash pattern={on 0.84pt off 2.51pt}]  (225.26,575.8) -- (456.62,159.01) ;
\draw  [dash pattern={on 0.84pt off 2.51pt}]  (150.7,575.66) -- (485.7,420.66) ;
\draw    (55.75,399.77) -- (45.75,399.77) ;
\draw    (250.75,579.77) -- (250.75,570.77) ;
\draw    (55.75,529.77) -- (45.75,529.77) ;
\draw    (150.75,580.77) -- (150.75,571.77) ;
\draw    (350.75,579.77) -- (350.75,575.06) -- (350.75,570.77) ;
\draw (37,578.4) node [anchor=north west][inner sep=0.75pt]  [font=\scriptsize]  {$0$};
\draw (30,522.4) node [anchor=north west][inner sep=0.75pt]  [font=\scriptsize]  {$\ell_{0}$};
\draw (33,394.4) node [anchor=north west][inner sep=0.75pt]  [font=\scriptsize]  {$a$};
\draw (320,242.4) node [anchor=north west][inner sep=0.75pt]  [font=\footnotesize]  {$a( t-2) +\ell_{0}$};
\draw (425,320.4) node [anchor=north west][inner sep=0.75pt]  [font=\normalsize]  {$h_{a}( t)$};
\draw (433,459.4) node [anchor=north west][inner sep=0.75pt]  [font=\footnotesize]  {$\ell_{0}( t-1)$};
\draw (147,582.4) node [anchor=north west][inner sep=0.75pt]  [font=\scriptsize]  {$1$};
\draw (247,582.4) node [anchor=north west][inner sep=0.75pt]  [font=\scriptsize]  {$2$};
\draw (347,582.4) node [anchor=north west][inner sep=0.75pt]  [font=\scriptsize]  {$3$};
\draw (498,570.4) node [anchor=north west][inner sep=0.75pt]  [font=\small]  {$t$};
\end{tikzpicture}
}
  \caption{The function $h_a(t)$}
  \label{fig:ha}
\end{figure}

\begin{figure}[h]
  \centering
  \resizebox{12cm}{!}{
\tikzset{every picture/.style={line width=0.75pt}} 

\begin{tikzpicture}[x=0.75pt,y=0.75pt,yscale=-1,xscale=1]

\draw  (-54,575.74) -- (1002,575.74)(50.54,160) -- (50.54,626.06) (995,570.74) -- (1002,575.74) -- (995,580.74) (45.54,167) -- (50.54,160) -- (55.54,167)  ;
\draw    (51.2,575.16) .. controls (96.37,564.3) and (137.37,573.3) .. (152.37,568.3) ;
\draw  [dash pattern={on 0.84pt off 2.51pt}]  (248.33,529.67) -- (152.37,568.3) ;
\draw    (250.75,579.77) -- (250.75,570.77) ;
\draw    (55.75,529.77) -- (45.75,529.77) ;
\draw    (150.75,580.77) -- (150.75,571.77) ;
\draw    (650.75,580.77) -- (650.75,571.77) ;
\draw    (649,431) -- (149.9,569.01) ;
\draw  [dash pattern={on 0.84pt off 2.51pt}]  (249,529.06) .. controls (358,456.06) and (457,215.06) .. (478,160.06) ;
\draw    (54.75,430.77) -- (44.75,430.77) ;
\draw    (648.16,431.37) .. controls (782.23,386.86) and (837,231) .. (858,176) ;

\draw (37,578.4) node [anchor=north west][inner sep=0.75pt]  [font=\scriptsize]  {$0$};
\draw (30,522.4) node [anchor=north west][inner sep=0.75pt]  [font=\scriptsize]  {$\ell _{0}$};
\draw (409,370.4) node [anchor=north west][inner sep=0.75pt]  [font=\normalsize]  {$h_{a}( t)$};
\draw (147,582.4) node [anchor=north west][inner sep=0.75pt]  [font=\scriptsize]  {$1$};
\draw (247,582.4) node [anchor=north west][inner sep=0.75pt]  [font=\scriptsize]  {$2$};
\draw (970,584.4) node [anchor=north west][inner sep=0.75pt]  [font=\small]  {$t$};
\draw (642,586.4) node [anchor=north west][inner sep=0.75pt]  [font=\scriptsize]  {$2e^{\rho }$};
\draw (758,375.4) node [anchor=north west][inner sep=0.75pt]  [font=\normalsize]  {$h_{a,\rho }( t)$};
\draw (20,423.4) node [anchor=north west][inner sep=0.75pt]  [font=\scriptsize]  {$\ell _{0} e^{\rho }$};

\end{tikzpicture}}
  \caption{The delayed function $h_{a,\rho}(t)$}
  \label{fig:ha2}
\end{figure}

Let $H^\rho_{a,g}:T^*N\to\R$ be defined by $H^\rho_a(p,q):=h_{a,\rho}(|p|)$ as in \eqref{eq:radial_hamiltonian}. We omit $g$ or $a$ from the notation when they are obvious from the context. In particular, recalling that $|p|=e^r$, we have $H^0_a=h_a(e^r)$. Note that \emph{ for any fixed $\rho$, the set of all Hamiltonians of the form $H^{\rho}_{a,g}$ is $\preceq$-cofinal in the set of all Hamiltonians that are linear at infinity with respect to \emph{any} metric $g'$}.

\begin{rem}
The delay parameter $\rho$  controls the size of the compact subset over which the slope of $H$ is kept small.
\end{rem}

\begin{thm} \label{lmMainEstimate}
Fix a Riemannian metric $g$ on $N$ with non-degenerate time 1 geodesic flow, and a conical almost complex structure $J$ on $T^*N$. For each $a\in\R$ which is not in the period spectrum of the geodesic flow, pick a Hamiltonian $H^{\rho}_a$ depending on the delay parameter $\rho\in\R_+$ as above. Let $\K$ be a compact set of magnetic forms in the $C^{\infty}$ topology. 
Then there is a function $C=C(a,\rho,g,J,\K)$ which is 
\begin{itemize}

\item monotone decreasing in $\rho$, 
\item satisfies $\inf_{\rho}C(a,\rho,J,\K)=1$, 
\item for any pair $\sigma_0,\sigma_1\in\K $, any $\rho\in\R_+$  and any solution $u$ to the parametrised Floer equation
\begin{equation}\label{eqFloerContinuation}
\partial_{s}u+J_{t}(u)(\partial_{t}u-X_{H,\sigma_s}(u,t))=0,
\end{equation}
continuing from $\sigma_0$ to $\sigma_1$, where $\sigma_s=f(s)(\sigma_1-\sigma_0)+\sigma_0$ as in \eqref{eqMagneticFormInterpolation}, we have for each $s\in\R$
\begin{equation}\label{eq_main_estimate}
\left|\int_{(-\infty,s)\times S^1} u^*\pi^*(\sigma_1-\sigma_0)\right|\leq C E(u)\|\pi^*(\sigma_1-\sigma_0)\|_\infty.
\end{equation}
\item
Given a choice of a third magnetic form $\sigma_2\in\K$ we have for any solution to \eqref{eqFloerContinuation} the estimate 
\begin{equation}\label{eq_main_estimate0}
\left|\int u^*\pi^*(\sigma_2-\sigma_0)\right|\leq C E(u)\|\pi^*(\sigma_2-\sigma_0)\|_\infty.
\end{equation}

\end{itemize}
Moreover, the estimates \eqref{eq_main_estimate0} and \eqref{eq_main_estimate} are robust with respect to $C^2$-small perturbations of $H$ and a perturbation of $J$ to one that is metrically equivalent.
\end{thm}

We formulate a variant of the Theorem \ref{lmMainEstimate} for a continuation which involves simultaneously varying $H$ \textit{and} $\sigma$.

\begin{thm}\label{thmEstCont}
Given a  pair $$H_1=H_{a_1,g_1}^{\rho_1}\leq H_2= H_{a_2,g_2}^{\rho_2}$$ and almost conical almost complex structures $J_0,J_1$ as in Theorem \ref{lmMainEstimate}  there is a dissipative Floer datum $\mathfrak{H}^{12}=(H^{12}_s,J^{12}_s)$ interpolating between them and a constant $C^{12}(a_1,a_2,g_1,g_2,J_1,J_2,\rho_1,\rho_2)$ such that  for any $s_0\in\R$ the estimates 
\eqref{eq_main_estimate} and \eqref{eq_main_estimate0} with $C$ replaced by $C^{12}$ hold for solutions to the following Floer equation
\begin{equation}\label{eqFloerContinuation1}
\partial_{s}u+J^{12}_{s+s_0,t}(u)(\partial_{t}u-X_{H^{12}_{s+s_0},\sigma_s}(u,t))=0.
\end{equation}
Moreover, the constant $C^{12}$ is independent of $s_0$ and can be made arbitrarily close to $1$ by taking $\rho_1,\rho_2$ large enough. Given a triple $H_1\leq H_2\leq H_3$ as above and a pair of interpolating dissipative Floer data $\mathfrak{H}^{12}$ and  $\mathfrak{H}^{23}$ as above, the Floer datum $\mathfrak{H}^{123}$ obtained by gluing together $\mathfrak{H}^{12}$ and  $\mathfrak{H}^{23}$ satisfies the same estimates with constant $C^{13}=\max\{C^{12},C^{23}\}$. 

In case where $a_1=a_2=a,g_1=g_2=g,J_1=J_2=J$  we have 
\begin{equation}\label{eqCCtag}
C^{12}(a_1,a_1,g_1,g_1,J_1,J_1,\rho_1,\rho_2)=C(a,g,J,\rho_2).
\end{equation}
Moreover in this case, we can take 
\begin{equation}\label{eqStandIntPl}
H^{12}_{s,t}=f(s)H_1+(1-f(s))H_2
\end{equation}
 for $f$ any function which is monotone decreasing in $s$, equals $1$ for $s\ll 0$, $0$ for $s\gg 0$, and such that $f'(s)$ is compactly supported. Finally, let $H_1= H_{a,g}^{\rho_1}$ and $H_2= H_{a,g}^{\rho_2}+b$ where $b\in\R$ is any constant. Then $C^{12}= C(a,g,J,\min\{\rho_1,\rho_2\})$.  
\end{thm}
\begin{rem}\label{remHomtopoyOfHomotopies}
Taking $s_0$ to the limits $\pm\infty$ in the last Theorem corresponds to concatenating two continuation maps, one fixing $\sigma$, the other fixing $H$, the order determined by whether we are taking the limit to negative or positive infinity.
\end{rem}
At last, we formulate a variant involving the pair of pants. We consider Floer data $(H^\rho_{a_i},J_i)$ for $i=1,2,3$ where $a_i$ are integers and $a_3=a_1+a_2$, and we fix a product Floer datum $(\mathfrak{H},J)$ interpolating between them on $\Sigma_{0,1,2}$ which is everywhere of the form $(H\alpha,J)$ for an appropriate Hamiltonian $H$ and closed form $\alpha$. We let $\eta \colon \Sigma_{0,1,2}\to\R$ be a proper function with negative ends  mapping to $-\infty$ and the positive end to $+\infty$. Let $\eta_{s_0}(s):=\eta(s+s_0)$.  For an $s$-dependent magnetic form $\sigma_s$ we obtain a family 
\begin{equation}\label{eqPoPfam}
z\mapsto \sigma_{\eta_{s_0}(z)}
\end{equation}
on $\Sigma_{0,1,2}$.  Note that moving $s_0$ to $-\infty$ has the effect of first applying the magnetic continuation and then the product, while moving $s_0$ to $\infty$ has the effect of doing the same in the reverse order.
\begin{thm}\label{thmEstProd}
There exists a product Floer datum $\mathfrak{H}$ such that the statement of Theorems \ref{tmEneCompCont} and \ref{lmMainEstimate} remain true upon replacing the cylinder by the pair of pants, replacing $H$ by $\mathfrak{H}$ and taking $\sigma$ to be the family given in \eqref{eqPoPfam}. The resulting constant $C$ can be taken to be independent of $s_0$.
\end{thm}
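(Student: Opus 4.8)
\textbf{Proof strategy for Theorem \ref{thmEstProd}.}
The plan is to reduce the pair-of-pants case to the already-established cylindrical case (Theorems \ref{tmEneCompCont}, \ref{lmMainEstimate}, and \ref{thmEstCont}) by exploiting the fact that outside a compact set of $T^*N$ the product Floer datum agrees with $(H\alpha, J)$ for a closed $1$-form $\alpha$ on $\Sigma_{0,1,2}$. First I would set up the geometry: choose $\mathfrak{H}$ so that near each of the three punctures it restricts to the cylindrical datum $(H^{\rho}_{a_i,g}\,dt, J_i)$, and so that on the complement of a large compact subset of $T^*N$ it is the ``split'' datum $H\alpha$ whose Floer equation is locally indistinguishable from the standard (translation-invariant) one on a cylinder. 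The function $\eta \colon \Sigma_{0,1,2}\to\R$ gives cylindrical-type coordinates near infinity of the domain, and pulling back $\sigma_s$ via $\eta_{s_0}$ makes the magnetic deformation depend on $\eta$ exactly as it depended on the cylinder coordinate $s$ before. The key point is that all the analytic input used in the cylindrical proofs is \emph{domain-local} (Remark \ref{rmRobust}): the $C^0$-estimate and the energy/action estimate are produced by monotonicity applied on balls in $\widetilde{M}$ and by the action-energy identity, neither of which sees the global topology of the domain surface.

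The main steps, in order, would be: (1) Construct $\mathfrak{H}$ explicitly by interpolating the cylindrical data near the punctures with the split datum $H\alpha$ over a compact core of $\Sigma_{0,1,2}$, arranging that the ``positivity of the connection $2$-form'' condition $\partial_s F-\partial_t G>-a$ holds, exactly as in the construction preceding the definition of the product; since outside a compact set of $T^*N$ we have $H_i=a_iH$ with $a_3=a_1+a_2$, the Poisson-bracket obstruction $\{F,G\}$ vanishes there, so $\widetilde\omega_{\mathfrak{H}}$ is genuinely symplectic near infinity. (2) Prove the $C^0$-estimate: cover $\Sigma_{0,1,2}$ by sub-cylinders near the three ends together with a compact piece; on each sub-cylinder the argument of Theorem \ref{tmEneCompCont} applies verbatim (rough conicality of $J_s$, taming by a fixed $\omega_{\sigma_{s_0}}$ outside a compact set, uniform isoperimetry of the equivalent metrics, monotonicity), and on the compact core of $\Sigma_{0,1,2}$ any solution of bounded energy is confined to an a-priori compact subset of $T^*N$ by the strong dissipativity of the three boundary data together with the Cauchy--Schwarz distance bound; patching gives $K_E$. (3) Prove the energy/action estimate \eqref{eq_main_estimate}: write the analogue of the energy identity with the error term $e=\int \eta_{s_0}'\bigl(\int {w^z}^*\sigma\bigr)$, and split it into the contribution over the compact core (bounded a priori, hence absorbable into $CE(u)\|\pi^*\sigma\|_\infty$ once $\|\pi^*\sigma\|_\infty$ is taken small as in Remark \ref{rem:rescale1}) plus the contributions over the three cylindrical ends, where the cylindrical estimate of Theorem \ref{lmMainEstimate} applies with the same constant $C=C(a,\rho,J)$, monotone in $\rho$ with infimum $1$; summing, and using that $\partial_s\eta_{s_0}$ is a fixed bump, yields the bound with a constant independent of $s_0$ and arbitrarily close to $1$. (4) Observe that the $C^2$-robustness and metric-equivalence robustness are inherited from the cylindrical statements since all estimates are uniform.

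The hard part will be step (3), and within it the treatment of the non-cylindrical compact core of $\Sigma_{0,1,2}$: on a cylinder the quantity $\int u^{s*}\omega_\sigma$ was controlled by a geometric argument using translation-invariance of the conical end and the scaling-invariance of $\sigma$, but over the pair-of-pants core there is no translation symmetry, so one must instead bound $\sup_z|\int {w^z}^*\omega_\sigma|$ directly in terms of $E(u)$ and the $C^0$-estimate from step (2)---essentially a Stokes-type argument bounding $\int u^*\omega_\sigma$ by the area of the image times $\|\pi^*\sigma\|_\infty$ plus a boundary term. Making the resulting constant genuinely independent of $s_0$ is the delicate point: as $s_0\to\pm\infty$ the core of the domain (as measured by $\eta_{s_0}$) drifts off to one end, so one must check that the a-priori compact set $K_E$ and the accumulated error from the core do not grow. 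This follows because the core of $\Sigma_{0,1,2}$ is fixed in the domain and only the reparametrization $\eta_{s_0}$ moves; the magnetic form $\sigma_z=\eta_{s_0}^*\sigma_s$ is always supported where $\partial_s f\neq 0$, a compact band in the domain, and its pullback has $L^\infty$-norm uniformly bounded in $s_0$. Once this uniformity is in place, letting $s_0\to\pm\infty$ recovers the two composition formulas (product-after-continuation and continuation-after-product) noted in the remark preceding the theorem.
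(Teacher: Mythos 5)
Your overall route---reduce to the cylindrical statements by domain-locality (Remark \ref{rmRobust}), handling the three ends by Theorems \ref{tmEneCompCont} and \ref{lmMainEstimate} and the compact core of $\Sigma_{0,1,2}$ separately---is the argument the paper evidently intends; indeed the paper gives no separate proof of Theorem \ref{thmEstProd}, treating it as a variant of Theorems \ref{lmMainEstimate} and \ref{thmEstCont}. Your steps (1) and (2) are in order: a product datum equal to $H\alpha$ outside a compact set has $\{F,G\}=0$ wherever it has that form, and the patched monotonicity/Cauchy--Schwarz argument gives the $C^0$ bound exactly as in Theorem \ref{tmEneCompCont}.

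The genuine gap is in step (3), in your treatment of the core. The cylindrical proof of Theorem \ref{lmMainEstimate} is multiplicative in $E(u)$ only because the measure of the high-energy region $B$ is itself bounded by $\frac{2a}{\delta}E_J(u)$ via Lemma \ref{lm_est2}; on the pair of pants the core has \emph{fixed} area, so the analogue of the term $\int|\iota_{X_H}\pi^*\sigma|_\infty\max\{1,\|\partial_su\|^2\}$ produces an additive constant independent of $E(u)$, which cannot be ``absorbed'' into $CE(u)\|\pi^*\sigma\|_\infty$ when $E(u)$ is small, and Remark \ref{rem:rescale1} (smallness of $\|\pi^*\sigma\|_\infty$) does not cure this mismatch. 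To obtain the estimate in the stated form you must rerun the dichotomy of the cylindrical proof over a circle-fibration of the core: bound the measure of high-energy slices by $E(u)$ using Lemma \ref{lm_est2}, and on low-energy slices invoke the analogue of Corollary \ref{cor_est1} to place the slice near a periodic orbit and apply Stokes with a local primitive of $\sigma$, with all core terms tending to $0$ as $\rho\to\infty$ (this also requires arranging, as in the proof of Theorem \ref{thmEstCont}, that the interpolation region keeps $|\iota_{X_{\mathfrak H}}\pi^*\sigma|$ small). Alternatively one may settle for an extra additive error $\epsilon(\rho)\to0$, which still suffices for the uses of Theorem \ref{thmEstProd} in the proof of Theorem \ref{tmMainIso}, but is not literally the claimed statement. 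Finally, your uniformity-in-$s_0$ reasoning is off in detail: the band on which $\sigma_z=\sigma_{\eta_{s_0}(z)}$ varies is \emph{not} a fixed compact band of the domain---it translates along a cylindrical end as $s_0\to\pm\infty$. The correct reason for uniformity is translation invariance of the datum along the ends, so that when the band sits on an end the contribution is exactly the cylindrical estimate of Theorem \ref{thmEstCont} with its $s_0$-independent constant, while the core always contributes its fixed, $\rho$-controlled amount.
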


\subsection{Proof of Theorem~\ref{tmMainIso}}
\label{sec:proof_main}
We now prove our main result, using the (as yet unproved) results from the previous section.
\begin{proof}[Proof of Theorem~\ref{tmMainIso}]
As our set $\mathcal{H}_K$ we consider pairs constructed as follows: By Lemma \ref{lmRescale} we can choose $J$ so that 
\begin{equation}\label{eqSigmainfyEst}
\|\pi^*\sigma_k\|_\infty<1/4,\quad\forall \, k\in\K.
\end{equation}
Then for any $a$ not in the associated period spectrum let $H$ be sufficiently $C^2$-close to $H^\rho_{a,g}$ with $\rho$ such that 
\begin{equation}\label{eqCLess2}
C(a,\rho,J,g,\sigma(\K))<2.
\end{equation}

By construction $\mathcal{H}_\K$ is open and $\preceq$-cofinal in the set of all linear at infinity Hamiltonians. Clearly, $\mathcal{H}_\K\subset\mathcal{H}_{\K'}$. 
This establishes the properties \ref{tmMainIsoA}-\ref{tmMainIsoC} of Theorem \ref{tmMainIso}.

We now establish  Theorem  \ref{tmMainIso}\ref{tmMainIsoD}.
We have that $CF(H,\K',k)$ is finitely generated over $\Lambda_{\K'}$ by critical points  of $\mathcal{A}_{H,\sigma_k}$. Let $n$ be the number of $1$-periodic orbits of the Hamiltonian flow of $H$ \emph{with respect to $\omega_{\sigma_k}$}. Fix a basis $$x_i=[\gamma_i,w_i],\quad 1\leq i\leq n$$ of such critical points. A solution $u$  to Floer's equation
\begin{equation}\label{eqFloerSolutionOk}
\partial_{s}u+J(u)(\partial_{t}u-X_{H,\omega_{\sigma_k}}(u,t))=0,
\end{equation}
 determined by $\omega_{\sigma_k}$ gives rise to a $\Lambda_{\K'}$ linear map
 $$
 d_u:CF(H,\K',k)\to CF(H,\K',k).
 $$
 Explicitly, in the basis chosen above, if $u$ connects the periodic orbits $\gamma_i,\gamma_j$ corresponding to the basis elements $x_i,x_j$ respectively then $d_u$ is a matrix whose entries are 
 $$
 d_{u,l,m}=\begin{cases}e^{w_i\#u\#-w_j},&\quad (l,m)=(i,j),\\
           0,&\quad (l,m)\neq (i,j).
          \end{cases}					
$$
A priori $d_{H,\omega_k}$ is the  formal sum $d=\sum d_u$ where $u$ ranges over all Floer trajectories (modulo the action of $\R$ by shifts) of index difference $1$. We need to show this sum converges. In other words, we need to show that for each $(i,j)$ the set of elements  $\{w_i\#u\#-w_j\}\subset \Gamma$, where $u$ ranges over all the Floer trajectories (modulo the action of $\R$ by shifts)  of index difference $1$ connecting $\gamma_i$ and $\gamma_j$, satisfies the multi-finiteness conditions determined by $\K'$. By definition this means the following.  For any real number $c$ and any $k'\in \K'$ let $n_{c,k'}$ be the number of such Floer trajectories $u$ so that $\omega_{k'}(u):=\langle\omega_{k'},w_i\#u\#-w_j\rangle<c$. Here $\langle\cdot,\cdot\rangle$ is the pairing between singular homology and de Rham cohomology on $T^*N$. Then we need to show that $n_{c,k'}$ is finite for all $c\in\R$ and $k'\in\K$. 

By equation \eqref{eqActionEnergyIdentity} for the case $\sigma_0=\sigma_1$, we have 
\begin{align}
\omega_{k'}(u)= & \ \omega_k(u)+\int u^*\pi^*\sigma+\int w_i^*\pi^*\sigma-\int w_j^*\pi^*\sigma\notag\\
= & \ E(u)-\int (H(t\gamma_i(t))-H(t,\gamma_j(t))dt \notag\\
 & +\int u^*\pi^*\sigma+\int w_i^*\pi^*\sigma-\int w_j^*\pi^*\sigma\notag\\
= & \ E(u)+\int u^*\pi^*\sigma+D_{ij}\notag,
\end{align}
where $D_{ij}$ is a $u$-independent constant. Applying the estimates \eqref{eq_main_estimate0}, \eqref{eqSigmainfyEst} and \eqref{eqCLess2}, we have $$\left|\int u^*\pi^*\sigma\right|<\frac12E(u).$$ So $n_{c,k'}$ is estimated by the number $n'$ of solutions $u$ to \eqref{eqFloerSolutionOk} of energy $\leq 2(c'+ D_{ij})$. These solutions are all contained in an a priori compact set by Theorem \ref{tmDissDaimEst} and Theorem \ref{tmLinDiss}. So for $(H,J)$ regular, the number $n'$ is finite by standard Gromov-Floer compactness.

We proceed to prove Theorem \ref{tmMainIso}\ref{tmMainIsoE}. For $k_1,k_2\in\K$ and $\K'\subset\K$ we  wish to construct the continuation map
\[
f:CF(H,\K',k_1)\to CF(H,\K',k_2).
\]
Let $\{\alpha_1,\dots,\alpha_m\}$ and $\{\beta_1,\dots,\beta_n\}$  be the $1$-periodic orbits of $H$ with respect to $\omega_{k_1},\omega_{k_2}$ respectively. Pick lifts $\{[\alpha_i,w_i]\}$ and $\{[\beta_i,v_i]\}$ to the appropriate covering space. These form ordered bases for $CF(H,\K',k_1)$ and $CF(H,\K',k_2)$ respectively over $\Lambda_{\K'}$. 

Let $u$ be a solution to Floer's equation \eqref{eqFloerContinuation}  
interpolating between $\sigma_{k_1}$ and $\sigma_{k_2}$ via \eqref{eqMagneticFormInterpolation}. We have
\[
\mathcal{M}([\alpha_i,w_i],[\beta_j,v_j];H,\{\sigma_s\})=\mathcal{M}([\alpha_i,w_i\#w],[\beta_j,v_j\#w];H,\{\sigma_s\}),
\]
for all loops $w$ in the relevant component of the loop-space. Thus, as before, a continuation Floer trajectory $u$ from $[\alpha_i,w_i]$ to $[\beta_j,v_j]$ defines a $\Lambda_{\K'}$-linear map 
\[
f_u:CF(H,\K',k_1)\to CF(H,\K',k_2)
\] 
given in the bases above by the matrix 
 $$
 f_{u,l,m}=\begin{cases}e^{w_i\#u\#-v_j},&\quad (l,m)=(i,j),\\
           0,&\quad (l,m)\neq (i,j).
          \end{cases}					
$$

The map $f$ is formally the sum $f=\sum_u f_u$, where the sum is over all solutions $u$ to \eqref{eqFloerContinuation}. We need to verify that this sum converges. Writing
\[
\Delta\mathcal{A}_u:=\mathcal{A}_{H,\sigma_0}([\alpha_j,w_i])-\mathcal{A}_{H,\sigma_1}([\beta_j,v_j\#u]),
\]
we obtain by \eqref{eqCLess2} and \eqref{eq_main_estimate} and \eqref{eqActnEst} that
\[
E(u)\leq\Delta\mathcal{A}_u+|e|\leq \Delta\mathcal{A}_u+ \left|\int w^{0*}\sigma\right|+\frac12 E(u).
\]
This translates into 
\[
E(u)\leq 2\left(\Delta\mathcal{A}_u+\left|\int w^{0*}\sigma\right|\right).
\]
As a consequence we have an energy estimate for Floer trajectories of each topological type and we deduce by Gromov-Floer compactness that that \emph{the number $n_{A,i,j}$ of solutions \eqref{eqFloerContinuation} connecting a given pair of periodic orbits  indexed by $i,j$ and of topological type $A$ is finite.} It remains to verify the multi-finiteness conditions. For this it suffices to show that for any $k'\in \K'$ an estimate on $\omega_{k'}(u)$ implies one on $E(u)$. 
Taking $D:=\max_i \left\{\left|\int w^{i*}\sigma\right|\right\}$  we can rewrite the previous estimate to obtain
\[
E(u)\leq 2(\Delta\mathcal{A}_u+D)\leq 2\omega_{\sigma_0}(u)+F
\]
for all possible continuation trajectories. Here $F$ is a constant estimating $D$, the Hamiltonian term appearing in the formula for $\mathcal{A}_{H,\omega_{\sigma_i}}$   and the value of $\omega_{\sigma_1}$ on $v_j$. All these terms depend on $i,j$ but not on $u$. 

Given any $k'\in\K$ we have the estimate 
\[
\omega_{\sigma_0}(u)\leq \omega_{k'}(u)+\frac{C}{4}E(u)
\]
Combining the last two estimates with \eqref{eqCLess2} we get the desired estimate on $E(u)$ in terms of $\omega_k'(u)$. 

We have thus established the well-definedness of $f$. We define 
\[
f':CF(H,\K,k_2)\to CF(H,\K,k_1).
\]
in the same way. Since Gromov-Floer compactness holds in our setting, the argument that $f$ and $f'$ are chain maps is the standard one. Similarly, we construct chain homotopies between $f\circ f'$, $f'\circ f$ and the respective identities in the usual way with no new ideas required beyond what was needed in the definition of $f,f'$.

Finally we deal with Theorem \ref{tmMainIso}\ref{tmMainIsoF}. We first deal with  the case $(H_{a_1,g_1}^{\rho_1},J_2)\leq (H_{a_2,g_2}^{\rho_2},J_2)$ where  $\rho_1$ and $\rho_2$ are small enough for the constant $C^{12}$ of Theorem \ref{thmEstCont}  to be $<2$.  In this case, we pick a continuation datum as in Theorem \ref{thmEstCont}. Each solution $u$ to \eqref{eqFloerContinuation} then gives rise to $\Lambda_{\cK}$ linear map 
$$
c_u:CF(H_1,\K,k_2)\to CF(H_2,\K,k_1).
$$
Given the estimates of Theorem \ref{thmEstCont}, the convergence of the sum $\Sigma_u c_u$ where $u$ runs over all solutions over all $u$ to \eqref{eqFloerContinuation} is word for word the same as for the differential. The fact that this gives rise to a map on homology, i.e., it commutes with the differential, is standard. To show that $f_{k_1,k_2,*}$ commutes with these continuation maps  we consider the homotopy of homotopies  as described in Remark \ref{remHomtopoyOfHomotopies} where we allow equation \eqref{eqFloerContinuation} to depend on additional parameter $s_0$ and obtain a homotopy between the data defining $h\circ f$ and the data defining $f\circ h$ by allowing $s_0$ to go between from $-\infty$ to $+\infty$. Theorem \ref{thmEstCont} and the same argument as in the construction of $f$ shows that this homotopy produces a chain homotopy between $h\circ f$ and $f\circ h$. 

To define continuation maps for \emph{all} pairs $H_1\preceq H_2$ in $\cH_{\K}$ note that by the argument in the last paragraph and \eqref{eqCCtag},  the continuation map is defined for all ordered pairs of the form $H_1=H_{a,g}^{\rho_1}\leq H_2=H_{a,g}^{\rho_2}$ whenever both Hamiltonians satisfy \eqref{eqCLess2}. Moreover, the last paragraph of Theorem \ref{thmEstCont} allows us to show that in this case the continuation map is an isomorphism.  Indeed, for any $b>0$, if we consider the continuation from $H_1=H_{a,g}^{\rho_1}$ to $H_3=H_1+b$ given by $H^{13}_s=H_1+f(s)$, the continuation map is the identity. If we pick $b>\max\{H_2-H_1\}$, we get a pair of continuation data $H^{12},H^{23}$  gluing gives us another continuation datum, $H^{123}$ from $H_1$ to $H_3$. If we pick $H^{12}$ and $H^{13}$ within the set of continuation data of the form \eqref{eqStandIntPl} there is homotopy from $H^{123}$ to $H^{13}$ within the set of continuation data of the form \eqref{eqStandIntPl} as standard argument now implies the maps on homology induced by $H^{13}$ and $H^{123}$ are the same. 

This allows us to define a continuation map for an arbitrary pair $H_{a_1,g_1}^{\rho_1}\preceq H_{a_2,g_2}^{\rho_2}$, by applying for each of them a continuation map to one of the same slope but with sufficiently large delay parameter for $C'$ to be $<2$ and the previous paragraph to apply \footnote{Note we are not 
proving that this continuation maps is independent of the choices.}.  Commutation of $f_{k_1,k_2,*}$ with  continuation maps for all $\preceq$-ordered pairs of Floer data in $\cH_{\K}$ is now automatic.

The claim concerning the product follows in the same way from Theorem \ref{thmEstProd}. 
\end{proof}
\section{Proof of Theorem \ref{lmMainEstimate}}
\label{sec:estimate_proofs}

The proof of Theorem \ref{lmMainEstimate} will rely on the following two technical lemmas whose proof we postpone until later in the section. For each $\rho$ subdivide $T^*N$ into three regions, $B_i^{\rho}$ for $i=1,2,3$, corresponding respectively to $e^r\in[0,1)$, $e^r\in[1,2e^\rho]$ and $e^r\in[2e^\rho,\infty)$. Denote by $\psi_\rho$ the Liouville flow. Then $H^{\rho}_a$ coincides with $H^0_a\circ\psi_{-\rho}$ on $B_3^{\rho}$ and with $H^0_a$ on the interval $B_1^{\rho}$. On the intermediate region $B_2^\rho$, after perhaps shrinking $\ell_0$, there are no periodic orbits magnetic or otherwise.

\begin{lem}\label{lm_est1}
Fix an $a\in\R_+$ not in the period spectrum of the geodesic flow. For every $\delta>0$ there is an $\epsilon>0$ such that for any $\rho\geq 0$ and any $x:S^1\to T^*N$ satisfying $E_{H^{\rho}_a,J,\omega_0}(x)<\epsilon$ there is an $\omega_0$-periodic orbit $x_0$ of $H^{\rho}_a$ for which
\[
\sup_td(x(t),x_0(t))< \delta.
\]
\end{lem}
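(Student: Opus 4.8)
The plan is to exploit the self‑similarity of the family $\{H^\rho_a\}$ under the Liouville flow $\psi_\rho$ in order to reduce the statement --- whose genuine content is the \emph{uniformity of $\epsilon$ in $\rho$} --- to the corresponding statement for two fixed Hamiltonians, for which it is the standard soft consequence of the dissipative compactness machinery of \cite{Groman2015}. I would work with $H_\infty$ (equal to $H^0_a$ on $\{|p|\le1\}$ and of constant slope $\tfrac12\ell_0$ on $\{|p|\ge1\}$) and with $H^0_a$ itself; after shrinking $\ell_0$ both have slope at infinity off the period spectrum, so $(H_\infty,J)$ and $(H^0_a,J)$ are strongly dissipative by Theorem~\ref{tmLinDiss}. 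The point is that $H^\rho_a=H_\infty$ on $\{|p|\le 2e^\rho\}$, while on $\{|p|\ge e^\rho\}$ the slope of $H^\rho_a$ at radius $|p|$ equals that of $H^0_a$ at radius $e^{-\rho}|p|$; hence $\psi_{-\rho}$ conjugates the $\omega_0$‑Hamiltonian dynamics of $H^\rho_a$ on $\{|p|\ge e^\rho\}$ to that of $H^0_a$ on $\{|p|\ge1\}$, and --- since a conical $J$ is $\psi_\rho$‑invariant, so $\psi_{-\rho}^*g_J=e^{-\rho}g_J$ --- it rescales the energy $E_{\cdot,J}$ by $e^{-\rho}$ and ambient distances by $e^{\pm\rho/2}$.

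\textbf{Step 1 (the single‑datum statement).} For strongly dissipative $(H,J)$: if $E_{H,J}(y_k)\to0$ but the $y_k$ stay $\delta$‑away from $\mathcal P(H)$, then condition~\ref{dfDess2} of Definition~\ref{dfDess} confines the $y_k$ to a fixed compact set, so $\|\dot y_k\|_{L^2}$ is bounded; Arzel\`a--Ascoli gives $y_k\to y_\infty$ in $C^0$, hence $\dot y_k\to X_H(y_\infty)$ in $L^2$, so $y_k\to y_\infty$ in $W^{1,2}\hookrightarrow C^0$ with $y_\infty\in\mathcal P(H)$ --- contradiction. For $H=H^0_a$ I would additionally use the \emph{quantitative} refinement $\operatorname{dist}(y,\mathcal P(H^0_a))\lesssim E_{H^0_a,J}(y)^{1/2}$ near $\mathcal P(H^0_a)$, obtained by combining the soft statement (to reach a fixed neighbourhood of $\mathcal P$) with the Morse--Bott inequality $\|\nabla\mathcal A_{H^0_a}\|\gtrsim\operatorname{dist}(\cdot,\operatorname{Crit})$; this is where the non‑degeneracy hypothesis on $g$ enters.

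\textbf{Step 2 (confinement and a dichotomy, uniform in $\rho$).} Because $H^\rho_a$ is radial and the form is the untwisted $\omega_0$, \eqref{eq:X_H_sigma} gives $X^p_{H^\rho_a}=0$, whence $\bigl|\tfrac{d}{dt}|p(x(t))|\bigr|\le|(\dot x-X_{H^\rho_a}(x))^p|$; as a vertical vector at radius $c$ has $g$‑norm $\asymp c^{1/2}$ times its $g_J$‑norm, integration yields $\operatorname{osc}_t|p(x(t))|\lesssim(c_{\max}E_{H^\rho_a,J}(x))^{1/2}$ with $c_{\max}:=\max_t|p(x(t))|$. Moreover, shrinking $\ell_0$ makes $X_{H_\infty}$ small on the collar $\{1\le|p|\le2\}$, so crossing it costs a definite amount of energy. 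Hence, for $\epsilon$ small, a loop $x$ with $E_{H^\rho_a,J}(x)<\epsilon$ falls into one of: (a) $\min_t|p(x)|<1$, forcing $x\subset\{|p|\le2\}$; (b) $\min_t|p(x)|\ge1$ and $c_{\max}\le2e^\rho$ --- impossible, because then $E_{H_\infty,J}(x)<\epsilon$ confines $x$ to a fixed compact set where Step~1 puts $x$ within $\delta_0$ of $\mathcal P(H_\infty)=\{\text{zero section}\}$, contradicting $|p(x)|\ge1$; or (c) $\min_t|p(x)|\ge1$ and $c_{\max}>2e^\rho$, in which case the oscillation bound forces $\min_t|p(x)|>e^\rho$, so $x\subset\{|p|>e^\rho\}$.

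\textbf{Step 3 (conclusion, and the hard part).} In case (a), $x$ lies in the fixed compact set $\{|p|\le2\}$ where $H^\rho_a=H_\infty$, so Step~1 puts $x$ within $\delta$ of $\mathcal P(H_\infty)$, which meets $\{|p|\le3\}$ only in the zero section; thus $x$ is $\delta$‑close to the zero section, which belongs to $\mathcal P(H^\rho_a,\omega_0)$. In case (c), $y:=\psi_{-\rho}(x)\subset\{|p|>1\}$ is a genuine $(H^0_a,J,\omega_0)$‑loop with $E_{H^0_a,J}(y)=e^{-\rho}E_{H^\rho_a,J}(x)$; the quantitative statement of Step~1 gives $y_0\in\mathcal P(H^0_a,\omega_0)$ with $\sup_t d(y(t),y_0(t))\lesssim e^{-\rho/2}E_{H^\rho_a,J}(x)^{1/2}$, and since $y$ sits at radius $>1$ while $\mathcal P(H^0_a)$ meets $\{|p|\ge1\}$ only at radii $\ge(h'_a)^{-1}(\ell_0)>2$, the orbit $y_0$ lies in $\{|p|>1\}$; then $x_0:=\psi_\rho(y_0)\in\mathcal P(H^\rho_a,\omega_0)$ and
\[
\sup_t d(x(t),x_0(t))=e^{\rho/2}\sup_t d(y(t),y_0(t))\lesssim E_{H^\rho_a,J}(x)^{1/2}.
\]
Choosing $\epsilon$ of order $\delta^2$ (and small enough for the ``$\epsilon$ small'' steps above) yields $\sup_t d(x(t),x_0(t))<\delta$, proving the lemma. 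The hard part is precisely the cancellation of the two factors $e^{\pm\rho/2}$ in the last display: it forces one to use the \emph{quantitative} square‑root single‑datum estimate rather than merely the soft clustering, since $\psi_\rho$ is only a conformal rescaling of $g_J$, not an isometry --- and this is the point at which the non‑degeneracy assumption on $g$ is genuinely needed (alternatively, one could measure $d$ in a metric for which $\psi_\rho$ is uniformly bi‑Lipschitz on $P^*N$).
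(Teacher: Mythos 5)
Your proposal is correct and follows essentially the same route as the paper: excluding the intermediate region for small $\epsilon$, handling the inner disc bundle by the fixed-Hamiltonian soft statement, and handling the conical region by conjugating with the Liouville rescaling $\psi_{-\rho}$ and invoking the quantitative non-degeneracy estimate $\sup_t d^2(x,x_0)\lesssim E(x)$ (the paper's \eqref{eqDistEnESt}), whose scale-invariance under $\psi_\rho$ is exactly the cancellation of $e^{\pm\rho/2}$ you identify as the crux. Your Step 2 dichotomy (oscillation bound on $|p|$ plus the collar-crossing cost) just fills in the paper's assertion that small energy confines $x$ to $B_1^\rho$ or $B_3^\rho$.
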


We shall need the following variation which is a corollary.
\begin{cor}\label{cor_est1}
For any compact set $\K$ in the space of magnetic forms with the $C^{\infty}$ topology and for any $\delta>0$ there are constants $\epsilon>0$ and $\rho_0$ such that the following holds:  Let  $\sigma\in\K$ and suppose $\rho\geq \rho_0$ and let $x \colon S^1\to T^*N$ satisfy $E_{H^{\rho},J,\omega_\sigma}(x)<\epsilon$ then there is an $\omega_0$-periodic orbit $x_0$ of $H^\rho$ for which
\[
\sup_td(x(t),x_0(t))< \delta.
\]

\end{cor}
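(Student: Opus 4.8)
The goal is to deduce Corollary~\ref{cor_est1} from Lemma~\ref{lm_est1}. The essential point is that for a Hamiltonian $H^\rho$ of slope $a$, the discrepancy between the $\omega_0$-Hamiltonian vector field $X_{H^\rho}$ and the $\omega_\sigma$-Hamiltonian vector field $X_{H^\rho,\sigma}$ is, by equation~\eqref{eq:X_H_sigma}, equal to $h'_{a,\rho}(|p|)F_q(p/|p|)$, and by Lemma~\ref{lmMagPertbDec} (more precisely its proof), this quantity is small with respect to a conical metric when $|p|$ is large. The subtlety is that $H^\rho$ is \emph{not} linear everywhere: on the intermediate region $B^\rho_2$, where $e^r\in[1,2e^\rho]$, the slope is the small constant $\tfrac12\ell_0$, and as $\rho\to\infty$ this region grows to exhaust a larger and larger part of $T^*N$. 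So I would argue that by choosing $\rho$ large the ``bad'' region $B^\rho_1$ (where $\pi^*\sigma$ is not small) is pushed arbitrarily deep inside, and on $B^\rho_2\cup B^\rho_3$ the magnetic perturbation of the dynamics is uniformly controlled.

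\textbf{Key steps, in order.} First, fix $\sigma$ and $\delta>0$. Apply Lemma~\ref{lm_est1} to obtain $\epsilon_0>0$ so that any loop $x$ with $E_{H^\rho_a,J,\omega_0}(x)<\epsilon_0$ is $\delta$-close (in sup-distance) to an $\omega_0$-periodic orbit of $H^\rho_a$, uniformly in $\rho\geq 0$. Second, relate the two energies: for a loop $x$,
\[
E_{H^\rho,J,\omega_0}(x)=\int_0^1\|\dot x(t)-X_{H^\rho}(x(t))\|^2\,dt,\qquad E_{H^\rho,J,\omega_\sigma}(x)=\int_0^1\|\dot x(t)-X_{H^\rho,\sigma}(x(t))\|^2\,dt,
\]
and the two integrands differ only through $X_{H^\rho}-X_{H^\rho,\sigma}=h'_{a,\rho}(|p|)F_q(p/|p|)$. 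Using the elementary inequality $\|v-w\|^2\le 2\|v-u\|^2+2\|u-w\|^2$ with $u=X_{H^\rho}(x)$, $v=\dot x$, $w=X_{H^\rho,\sigma}(x)$, one gets
\[
E_{H^\rho,J,\omega_0}(x)\le 2 E_{H^\rho,J,\omega_\sigma}(x)+2\int_0^1\bigl\|h'_{a,\rho}(|p(t)|)F_{q(t)}(p(t)/|p(t)|)\bigr\|^2\,dt .
\]
Third, estimate the last integral. On the region $B^\rho_1$ (where $e^r<1$) the slope $h'_{a,\rho}$ is $C^2$-small by construction of $h_a$, so the contribution there is small independently of $\rho$; on $B^\rho_2\cup B^\rho_3$ (where $e^r\ge 1$) the factor $h'_{a,\rho}$ is bounded by $a$ while $\|F_q(p/|p|)\|_{g_{J,\omega}}$ decays like $e^{-r/2}$ (feature (4)/(3) of \S\ref{SeqACS}, as in the proof of Lemma~\ref{lmMagPertbDec}), hence is bounded by a fixed small constant times $\|\pi^*\sigma\|_\infty$ whenever $e^r\ge 1$ --- and this is uniform in $\rho$. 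A cleaner route, and the one I would actually take: since for $\rho\ge 0$ the function $H^\rho$ never has slope exceeding $a$ and equals $H^0\circ\psi_{-\rho}$ on $B^\rho_3$ while being $C^2$-small on $B^\rho_1$, the pointwise norm $\|X_{H^\rho}-X_{H^\rho,\sigma}\|$ is bounded by a constant $c(a,J)\|\pi^*\sigma\|_\infty$ \emph{everywhere outside} $B^\rho_1$, and on $B^\rho_1$ it is bounded by $\|\text{small perturbation}\|\cdot\|\pi^*\sigma\|_\infty$; in either case $\|X_{H^\rho}-X_{H^\rho,\sigma}\|^2\le c'$ uniformly. Then choose $\rho_0$ large enough and $\epsilon\le\epsilon_0/4$ small enough that $2\epsilon+2c'<\epsilon_0$; actually the uniformity already makes the dependence on $\rho$ vacuous, so one really only needs $\epsilon$ small, but phrasing it with a $\rho_0$ matches the statement and leaves room to absorb any $\rho$-dependence hidden in the constant from Lemma~\ref{lmMagPertbDec}. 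Finally, for $x$ with $E_{H^\rho,J,\omega_\sigma}(x)<\epsilon$ we get $E_{H^\rho,J,\omega_0}(x)<\epsilon_0$, and Lemma~\ref{lm_est1} produces the desired $\omega_0$-periodic orbit $x_0$ of $H^\rho$ with $\sup_t d(x(t),x_0(t))<\delta$.

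\textbf{Main obstacle.} The delicate point is the uniformity in $\rho$ of the bound on $\|X_{H^\rho}-X_{H^\rho,\sigma}\|$. Because the intermediate slab $B^\rho_2$ expands as $\rho\to\infty$, one must be sure that no constant degrades with $\rho$; the saving grace is that on all of $B^\rho_2\cup B^\rho_3$ the slope is $\le a$ and the conical geometry forces $\|\partial_{p_i}\|\to 0$ as $r\to\infty$ at a rate independent of $\rho$, so the perturbation is in fact \emph{smaller} deep inside $B^\rho_3$, not larger. I would make this explicit by invoking feature (3)--(4) of \S\ref{SeqACS} together with the proof of Lemma~\ref{lmMagPertbDec}, noting that the Lorentz force coefficients depend only on the base point $q$ and hence are uniformly bounded on the compact $N$. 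The role of taking $\rho\ge\rho_0$ is then simply to guarantee that the genuinely non-linear behavior is confined to $B^\rho_1$ where $h'$ is $C^2$-small, so that the perturbation estimate holds on the nose.
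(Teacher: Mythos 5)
Your overall strategy --- compare $E_{H^\rho,J,\omega_0}$ with $E_{H^\rho,J,\omega_\sigma}$ via the pointwise perturbation $X_{H^\rho}-X_{H^\rho,\sigma}$ and then feed the result into Lemma~\ref{lm_est1} --- is the right starting point, but the step where you absorb the perturbation term breaks down, and this is exactly where the paper's proof does something you have omitted. Your inequality reads $E_{\omega_0}(x)\le 2E_{\omega_\sigma}(x)+2\int_0^1\|X_{H^\rho}-X_{H^\rho,\sigma}\|^2$, and you then claim the second term is bounded by a uniform constant $c'$ and propose to choose $\epsilon$ so that $2\epsilon+2c'<\epsilon_0$. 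This is impossible in general: $c'$ is a \emph{fixed} positive constant (at the inner edge of $B_2^\rho$, where $e^r=1$, the slope is the fixed number $\tfrac12\ell_0$ and the vertical vector $F_q(p/|p|)$ has conical norm of fixed size, so the perturbation there is of order $\ell_0\|\pi^*\sigma\|_\infty$ and does not decay), whereas $\epsilon_0=\epsilon_0(\delta,a)$ from Lemma~\ref{lm_est1} shrinks with $\delta$. For small $\delta$ one will have $2c'\ge\epsilon_0$ and no choice of $\epsilon$ or $\rho_0$ rescues the inequality. Your parenthetical remark that ``the uniformity already makes the dependence on $\rho$ vacuous'' is a symptom of the problem: $\rho_0$ plays an essential role that your argument never uses.

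The missing idea is a localization step. The paper first observes that $\ell_0$ can be chosen so that any loop meeting the intermediate slab $B_2^\rho$ has $\omega_\sigma$-energy bounded below by a constant independent of $\rho$; hence for $\epsilon$ small the loop $x$ lies entirely in $B_1^\rho$ or entirely in $B_3^\rho$, and the problematic region where the perturbation is merely bounded (rather than small) is never visited. The two cases are then handled separately: on $B_1^\rho$ the Hamiltonian is $C^2$-small, so small $\omega_\sigma$-energy forces $x$ to be close to a critical point, i.e.\ to a constant periodic orbit; on $B_3^\rho$, which for $\rho\ge\rho_0$ sits at radius $e^r\ge 2e^{\rho}$, the magnetic correction decays like $e^{-r/2}\le Ce^{-\rho/2}$ by Lemma~\ref{lmMagPertbDec}, so there (and only there) your energy comparison goes through and yields $E_{\omega_0}(x)<2\epsilon$, after which Lemma~\ref{lm_est1} applies. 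In short: your pointwise perturbation estimate is correct where you need it ($B_3^\rho$ for large $\rho$, and $B_1^\rho$), but you have no control where it fails ($B_2^\rho$), and without the energy-threshold argument excluding $B_2^\rho$ the proof does not close.
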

\begin{proof}
The constant $\ell_0$ can be chosen so that on the intermediate region $B_2^{\rho}$ there is a lower bound on the $\omega_\sigma$-energy of any loop. Thus, for $\epsilon$ small enough, the assumption $E_{H^{\rho},J,\omega_\sigma}(x)<\epsilon$ implies that $x$ is contained either in $B_1^\rho$ or in $B_3^\rho$. If $x$ is contained in $B^{\rho}_1$, the inequality further implies that the gradient of $H^{\rho}$ with respect to $\omega_\sigma$ is arbitrarily small depending on $\epsilon$, and thus $x$ is arbitrarily close to a critical point of $H^{\rho}$. For $x$ in $B_3^\rho$ and $\rho$ large enough, the inequality   $E_{H,J,\omega_\sigma}(x)<\epsilon$ implies the inequality $E_{H,J,\omega_0}(x)<2\epsilon$. To see this, by Lemma \ref{lmMagPertbDec} we have that the magnetic correction to the flow of $|p|=e^r$ has decaying norm with respect to conical metrics. The claim thus follows from Lemma~\ref{lm_est1}.
\end{proof}
\begin{lem}\label{lm_est2}
For any compact set $\K$, any pair $\sigma_0,\sigma_1\in \K,$ and any $\epsilon>0$, there are constants $\delta>0$ and $b$ such that the following holds: For any $\rho>0$ and any solution $u$ to the parametrised Floer equation
\begin{equation}
\partial_{s}u+J_{s,t}(u)(\partial_{t}u-X_{H,\sigma_s}(u,t))=0,
\end{equation}
where $\sigma_s=f(s)(\sigma_1-\sigma_0)+\sigma_0$ as in \eqref{eqMagneticFormInterpolation} defined on the cylinder $[s_0-b,s_0+b]\times S^1$ for some $s_0\in\R$ we have
\[
E_{H^{\rho},J_{s_0},\sigma_{s_0}}(u(s_0))\geq\epsilon\quad\Rightarrow\quad E_{H^{\rho},J}(u;[s_0-b,s_0+b]\times S^1)\geq\delta.
\]
\end{lem}

\begin{proof}[Proof of Theorem~\ref{lmMainEstimate}]
Write $H:=H^{\rho}$. In what follows, observe that near infinity $\|X_H\|$ scales like the distance while $\pi^*\sigma$ scales like the inverse of the distance squared. Thus, expressions like $|\iota_{X_H}\pi^*\sigma|_{\infty}$ or $\|X_{H}(p)\||\pi^*\sigma_p|$ are bounded uniformly and in fact decay as the inverse of the distance. If we fix a compact set $\K$ of magnetic forms $\sigma$, this decay is uniform in $\K$ as well.

Let $\sigma_0,\sigma_1,\sigma_2\in\K$. Let $u$ be a solution to the parametrised Floer equation
\begin{equation}
\partial_{s}u+J_{s,t}(u)(\partial_{t}u-X_{H,\sigma_s}(u,t))=0,
\end{equation}
where $\sigma_s=f(s)(\sigma_1-\sigma_0)+\sigma_0$ as in \eqref{eqMagneticFormInterpolation}.

For $\epsilon>0$, let $A_{\epsilon}\subset\R$ consist of values $s$ such that $E(u(s,\cdot))\geq\epsilon$. Let $B_\epsilon\subset \R$ consist of values whose distance from $A_\epsilon$ is at most $b(\epsilon)$, where $b$ is the constant determined by $\epsilon$ according to Lemma \ref{lm_est2}. From Lemma \ref{lm_est2} we have for the Lebesgue measure $\mu$ that
\[
\mu(B_{\epsilon})\leq \frac{2b}{\delta}E_{H,J}(u).
\]
In the following let $\sigma$ denote either $\sigma_1-\sigma_0$ or $\sigma_2-\sigma_0$. One uses Floer's equation to establish for any $B\subset B_{\epsilon}$ the inequality
\begin{align}\label{estC1}
\left|\int_{B\times S^1}u^*\pi^*\sigma\right|&\leq\int_{B\times S^1}\|\partial_su\|^2\|\pi^*\sigma\|_{\infty}+\left|\int_{B\times S^1}{ \sigma}(\partial_su,X_H)\right|\notag\\
&\leq \int_{B\times S^1}\|\partial_su\|^2\|\pi^*\sigma\|_{\infty}+\int_{B\times S^1}\|\iota_{X_H}\pi^*\sigma\|_{\infty}\max\{1,\|\partial_su\|^2\}\notag\\
&\leq \left(1+\left(\frac{2b}{\delta}+1\right)\frac{\sup|\iota_{X_H}\pi^*\sigma|_{\infty}}{|\pi^*\sigma|_{\infty}}\right)E_{H,J}(u)\|\pi^*\sigma\|_{\infty}\notag\\
&=:C_1(a,\rho,\K){E_{H,J}}(u)\|\pi^*\sigma\|_{\infty}.
\end{align}
Here the dependence on $a$ enters through the dependence on $H$ which has slope $a$ at infinity.

Let $D:=\R\setminus B_{\epsilon}$. We need to control $\int_{D\times S^1}u^*\sigma$. The number of components of $D$ is bounded by $E_{H,J}(u)/\delta$. Let $D_0$ be a component of $D$.  Fix a constant $\delta_1$. For any $s\in D_0$ we have that $E(u(s))<\epsilon$. Thus, by Corollary~\ref{cor_est1} and by making $\epsilon$ small enough, we may assume that, for $\rho$ large enough,  $u$ maps $D_0$ into the $\delta_1$-tubular neighbourhood of some periodic orbit $\gamma$ of $H^{\rho}$ with respect to $\omega = \omega_0$. We can take this neighbourhood to be the pre-image under $\pi$ of a tubular neighbourhood of $\pi(\gamma)$ where $\sigma$ is exact with a primitive $\alpha$. Note that projections of periodic orbits depends on $a$ but not on $\rho$. Thus there is a uniform estimate on $\pi^*\alpha$ which is independent of $\rho$. Consider the function $f=\iota_{X_H}\pi^*\alpha$. Then $|df|$ is scaling invariant outside a compact set and so decays like the inverse of the distance. Denote the components of $\partial D_0$ by $\gamma_0,\gamma_1$. By Stokes Theorem we have
\begin{align}
\left|\int_{D_0}u^*\pi^*\sigma\right|&=\left|\int_{\gamma_1-\gamma_0}u^*\pi^*\alpha\right|\notag\\
&\leq \int_{S^1}|f\circ\gamma_1-f\circ\gamma_0|dt+|\pi^*\alpha|(E(\gamma_1)+E(\gamma_0))\notag\\
&\leq|df|\delta_1+2\epsilon\sup_{\gamma_1\cup\gamma_0}|\pi^*\alpha|.\notag
\end{align}
Thus
\begin{align}\label{estC2}
\left|\int_Du^*\omega_\sigma \right| & \leq\frac1{\delta\|\pi^*\sigma\|_{\infty}}(|df|\delta_1+2\epsilon\sup_{\gamma_1\cup\gamma_0}|\pi^*\alpha|)E(u)\|\pi^*\sigma\|_{\infty}\notag \\
& =:C_2(a,\rho,\K)E(u)\|\pi^*\sigma\|_{\infty}.	
\end{align}
Moreover, the estimate remains true if, for any $s\in\R$, we replace $D$ on the left hand side of \eqref{estC2} with $D\cap (-\infty,s)\times S^1$. The constant on the right hand side is then independent of $s$.

We define the constant $C(a,\rho,K)=C_1(a,\rho,\K)+C_2(a,\rho,\K)$. We now verify that $C(a,\rho,\K)$ satisfies all the listed properties. By the observations concerning the scaling behaviours of $X_H$, $\sigma$ and $\alpha$, we have that  $C(a,\rho,K)$ is monotone increasing in $\rho$ and that $C_1(\rho)\to 1$ and $C_2(\rho)\to 0$ as $\rho\to\infty$. The estimates \eqref{eq_main_estimate} and \eqref{eq_main_estimate0} are immediate consequences of the estimates \eqref{estC1} and \eqref{estC2}.

\end{proof}

\begin{rem}  The proof of Theorem~\ref{lmMainEstimate} is robust under sufficiently small (compactly supported) time-dependent perturbations. In fact, on the unit disc bundle it suffices to require that $H^\rho$ be equal to some fixed $C^2$-small Hamiltonian. On the other hand, even in the compact case, when varying the symplectic structure one can only obtain the required a priori energy estimates for  Hamiltonians which are small enough relative to the perturbation of the symplectic form. Thus, it is to be expected that besides linearity near infinity one must further restrict the behaviour of the Hamiltonian in a neighbourhood of the zero section. This explains why we were forced to introduce the delay parameter $\rho$ to our Hamiltonians.
\end{rem}
\begin{proof}[Proof of Theorems \ref{thmEstCont} and \ref{thmEstProd}]
Consider first the case of Theorem \ref{thmEstCont}. Let $a_1\leq a_2$, let $f:\R^2\to[0,1]$ be a zig-zag function as in the proof of Theorem \ref{tmContinuatiation} and let $f_{s_0}(u,v):=f(u+s_0,v)$. Let $\eta_{s_0}:\R\times T^*M\to[0,1]$ be defined by $\eta_{s_0}(s,p)=f_{s_0}(s,\sqrt{|p|})$ and let $H_{s_0}$ be an $s$-dependent Hamiltonian defined by
\[
H_{s_0,s}:=\eta_{s_0}(s)H^\rho_{a_1}+(1-\eta_{s_0}(s))H^\rho_{a_2}.
\]
Call the region $\{(s,t)|s\in[s_0,s_0+1]\}$ where $\eta_{s_0}$ is non-constant the \textbf{zig-zag region}. The only difference with the case of Theorem \ref{lmMainEstimate} is that Lemma \ref{lm_est2} does not hold on the zig-zag region as the equation is not of the form \eqref{eqFloerContinuation} there. Thus, by the proof of Theorem \ref{lmMainEstimate} we have for any subset $F\subset \R\setminus[s_0-1,s_0+2])\times S^1$ the estimate 
\begin{equation}
\int_{F}u^*\pi^*\sigma\leq C'' E(u),
\end{equation}
where $C''=\max\{C(a_1,\rho_1,g_1), C(a_2,\rho_2,g_2)\}$. 
To estimate the integral $\int_{[s_0-1,s_0+2]\times S^1}u^*\pi^*\sigma$.  For this, Let $F\subset[s_0-1,s_0+2]\times S^1$, and proceed exactly as in the proof of the estimate \eqref{estC1}. We obtain
\begin{equation}
\left|\int_{F}u^*\pi^*\sigma\right|
\leq \int_{F\times S^1}\|\partial_su\|^2\|\pi^*\sigma\|_{\infty}+\int_{F\times S^1}\|\iota_{X_{H_{s_0}}}\pi^*\sigma\|_{\infty}\max\{1,\|\partial_su\|^2\}\notag.
\end{equation}

To conclude  we need to show  $\|\iota_{X_{H_{s_0}}}\pi^*\sigma\|_{\infty}$  can be made arbitrarily small. 
Computing $X_{H_{s_0,s}}$ we see that 
$$\|X_{H_{s_0,s}}\|\leq2\max\left\{\eta_{s_0}\|X_{H^\rho_{a_2,g_2}}\|,H^\rho_{a_2,g_2}\|X_{\eta_{s_0}}\|\right\}.$$ The term $\|X_{\eta_{s_0}}\|$ can be made arbitrarily small by making $\partial_vf$ arbitrarily small. Since $H^\rho_{a_2,g_2}$ scales as $e^r$, while the 2-form $|\pi^*\sigma|$ scales as $e^{-r}$, the we obtain that $\|\iota_{X_{H_{s_0}}}\pi^*\sigma\|_{\infty}$ is arbitrarily small as $\rho\to\infty$. Thus as in \eqref{estC1} we conclude,
$$\left|\int_{F}u^*\pi^*\sigma\right|<(1+C_3(\rho))E(u)\|\pi^*\sigma\|_{\infty}$$ for $C_3$ a constant converging to $0$ as $\rho=\min\{\rho_1,\rho_2\}\to\infty$.  

Finally, to see that \eqref{eqCCtag} holds, note that for a continuation from $H_{a,g}^{\rho_1}\leq H_{a,g}^{\rho_2}$ with $\rho_2\leq \rho_1$, the continuation map is of the form \eqref{eqFloerContinuation} in the region $B_3^{\rho}$. So the estimates of Theorem \ref{lmMainEstimate} hold with the constant $C(\rho_1)$.

We now treat Theorem \ref{thmEstProd}. In this case we are considering a product datum of the form $H\alpha$. So there is no zig-zag region. However, we need to briefly comment on what happens in the complement of the cylindrical ends, where Floer's equation is of the form \eqref{eqProdFloer} rather than \eqref{eqFloer}. Without loss of generality we can take $\alpha$ to be the pullback of the form $dt$ on the cylinder $\R\times S^1$ by the branched covering  $\psi: z\mapsto \Log z^n(z-1)^m$ where we identify the surface $\Sigma_{0,1,2}$ with the punctured Riemann sphere $S^2\setminus\{0,1,\infty\}$. Then away from the branch points which have measure 0, $\psi$ induced coordinates on $\Sigma_{0,1,2}$ in which \eqref{eqProdFloer} is exactly \eqref{eqFloer}. So the derivation of the estimate goes through for the more general equation.

\end{proof}

\subsection{Proof of Lemmas \ref{lm_est1} and \ref{lm_est2}}
The proof of Lemma \ref{lm_est1} relies on the following general lemma about Hamiltonian flows.

\begin{lem}
Let $(M,\omega)$ be a symplectic manifold, let $H \colon M\to\R$ and let $x_0 \colon S^1\to M$ be a transversally non-degenerate periodic orbit of the Hamiltonian flow of $H$. There are constants $c=c(x_0,H)$ and $\epsilon=\epsilon(x_0,H)$ such that for any $x:S^1\to M$ satisfying $d(x,x_0)<\epsilon$ we have
\begin{equation}\label{eqDistEnESt}
 \sup_td^2(x(t),x_0(t))<c^2\int_0^1\|\dot{x}(t)-X_H(x(t))\|^2dt.
\end{equation}
\end{lem}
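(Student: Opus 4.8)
The plan is to read \eqref{eqDistEnESt} as a quantitative inverse–function estimate for the ``Floer error'' map linearised at $x_0$. Fix the Riemannian metric implicit in the statement and let $\iota>0$ be the injectivity radius along the compact loop $x_0$. For $\epsilon<\iota$ and any loop $x$ with $d(x,x_0)<\epsilon$ there is a unique section $\xi$ of $x_0^*TM$ with $x(t)=\exp_{x_0(t)}\xi(t)$, and then $d(x(t),x_0(t))=|\xi(t)|$ for all $t$, so $\sup_t d(x(t),x_0(t))=\|\xi\|_{C^0}$. I would regard $x\mapsto \mathcal F(x):=\dot x-X_H(x)$ as a map from a $C^0$–neighbourhood of $x_0$ in the Banach manifold of $W^{1,2}$–loops to the Banach space $L^2(x_0^*TM)$, identifying $T_{x(t)}M$ with $T_{x_0(t)}M$ by parallel transport along the short geodesic (an isometry). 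Since $W^{1,2}(S^1)\hookrightarrow C^{1/2}(S^1)$, this map is as smooth as $H$, it vanishes at $x_0$, and its differential at $x_0$ is the classical operator $D_{x_0}\xi=\nabla_t\xi-\nabla_\xi X_H$ on $x_0^*TM$.

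Two ingredients then finish the argument. First, $D_{x_0}\colon W^{1,2}(x_0^*TM)\to L^2(x_0^*TM)$ is a first–order elliptic operator over $S^1$, hence Fredholm of index $0$; solving the ODE $\nabla_t\xi=\nabla_\xi X_H$ identifies its kernel with the fixed subspace of $\mathrm{Id}-d\phi^1_{x_0(0)}$, where $\phi^t$ is the Hamiltonian flow, so $D_{x_0}$ is invertible precisely when $x_0$ is non-degenerate, with bounded inverse; set $c_0:=\|D_{x_0}^{-1}\|_{L^2\to W^{1,2}}$. (When $H$ is autonomous and $x_0$ is merely transversally non-degenerate one works instead on the $L^2$–orthogonal complement of the one–dimensional kernel $\R\cdot(X_H\circ x_0)$ and obtains \eqref{eqDistEnESt} with $d(x(t),x_0(t))$ replaced by the distance of $x(t)$ to the image of $x_0$; this is what enters Lemma~\ref{lm_est1}, where $x_0$ is then chosen within its $S^1$–family.) Second, the standard proof of the inverse function theorem applies: writing $\mathcal F(x)=D_{x_0}\xi+R(\xi)$ with $\|R(\xi)\|_{L^2}\le\tfrac1{4c_0}\|\xi\|_{W^{1,2}}$ once $\|\xi\|_{C^0}<\epsilon$ (after shrinking $\epsilon$), rearranging gives $\|\xi\|_{W^{1,2}}\le 2c_0\|\mathcal F(x)\|_{L^2}$. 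Together with the Sobolev bound $\|\xi\|_{C^0}\le c_1\|\xi\|_{W^{1,2}}$ this yields
\[
\sup_t d(x(t),x_0(t))=\|\xi\|_{C^0}\le 2c_0c_1\|\mathcal F(x)\|_{L^2}=2c_0c_1\Big(\int_0^1\|\dot x-X_H(x)\|^2\,dt\Big)^{1/2},
\]
which is \eqref{eqDistEnESt} with $c=2c_0c_1$.

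The only substantive point — and the main obstacle — is the regularity claim for $\mathcal F$ near $x_0$: that it is $C^1$ as a map $W^{1,2}\to L^2$ with differential depending continuously (with a modulus) on the base point, so that the remainder $R$ obeys the quadratic bound above with constant governed by $\|H\|_{C^2}$ on a fixed neighbourhood of $x_0$ and the local geometry. This is routine Floer–theoretic bookkeeping: one expands $X_H(\exp_{x_0(t)}\xi(t))$ and $\nabla_t\exp_{x_0(t)}\xi(t)$ to first order in $\xi$ using the exponential map and parallel transport and estimates the remainders. Since all constants ultimately depend only on $\|H\|_{C^2}$ near $x_0$ and the metric there, the estimate is stable under $C^2$–small perturbations of $H$ and under passing to a metrically equivalent metric, which is exactly the robustness invoked in Theorem~\ref{lmMainEstimate}.
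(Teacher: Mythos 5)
Your argument is correct in substance, but it follows a genuinely different route from the paper's. The paper proves the estimate by contradiction and compactness: if \eqref{eqDistEnESt} failed along a sequence $x_n\to x_0$, one rescales the differences $v_n=(x_n-x_0)/d_n$ in exponential coordinates, shows $\int_0^1\|\dot v_n(t)-dX_H(x_0(t))v_n(t)\|\,dt\to 0$, and extracts a nonzero $1$-periodic solution of the linearised equation, i.e.\ a fixed vector of $d\psi_1$, contradicting non-degeneracy; in particular the constants $c,\epsilon$ are non-effective. You instead run a quantitative inverse function theorem for $\mathcal F(x)=\dot x-X_H(x)$: invertibility of $D_{x_0}=\nabla_t-\nabla_{(\cdot)}X_H$ (an index-zero Fredholm operator whose kernel is the fixed subspace of $d\phi^1$, so invertible exactly in the non-degenerate case), together with the standard quadratic estimate $\|R(\xi)\|_{L^2}\le C\|\xi\|_{C^0}\|\xi\|_{W^{1,2}}$ for the remainder in exponential coordinates, yields $\|\xi\|_{W^{1,2}}\le 2c_0\|\mathcal F(x)\|_{L^2}$ and then the $C^0$ bound by Sobolev embedding. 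What your route buys is explicit constants ($c=2c_0c_1$ in terms of $\|D_{x_0}^{-1}\|$ and the Sobolev constant) and manifest stability under $C^2$-small perturbations of $H$ and under metrically equivalent changes of $J$, which is precisely the robustness invoked in Theorem \ref{lmMainEstimate}; what it costs is the nonlinear bookkeeping behind the quadratic estimate, which you correctly single out as the only substantive analytic input and which is standard.

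One point deserves emphasis, and you handle it at least as carefully as the paper does. For an autonomous $H$ and a non-constant orbit, $D_{x_0}$ always has $X_H\circ x_0$ in its kernel, and the estimate with the fixed parametrised loop $x_0$ cannot hold verbatim (time shifts $x_0(\cdot+s)$ make the right-hand side vanish while the left-hand side is positive). Your parenthetical remedy --- work on the $L^2$-orthogonal complement of $\R\cdot(X_H\circ x_0)$, equivalently choose the comparison orbit within its $S^1$-family of reparametrisations, and read the conclusion as proximity to \emph{some} periodic orbit --- is exactly the form in which the lemma is applied in Lemma \ref{lm_est1} and Corollary \ref{cor_est1}. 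The paper's proof deals with the same issue differently, by asserting that the limiting solution $v$ has non-vanishing vertical component (hence is not a multiple of the horizontal field $X_H\circ x_0$) before invoking transversal non-degeneracy; your slice-type argument makes this step explicit and quantitative.
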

\begin{proof}
Let $\epsilon$ be the radius of injectivity of the normal exponential to $x_0$. Suppose \eqref{eqDistEnESt} does not hold. By Cauchy-Schwartz there is a sequence $x_n:S^1\to B_\epsilon(x_0)$ converging in $C^0$ to $x_0$ such that $x_n\neq x_0$ and such that
\[
\lim_n\frac{\int_0^1\|\dot{x_n}(t)-X_H(x_n(t))\|dt}{d(x_n,x_0)}=0.
\]

Abbreviate $d_n=\sup_td(x_0(t),x_n(t))$ and write in exponential coordinates $v_n(t):=\frac{x_n(t)-x_0(t)}{d_n}.$ We will show that we have
\begin{equation}\label{eq2}
\lim_{n\to\infty}\int_0^1\|\dot{v}_n(t)-dX_H(x_0(t))v_n(t)\|dt=0.
\end{equation}
By standard ODE theory this implies that $v_n$ converges to a 1-periodic solution $v$ to the differential equation
\[
\dot{v}(t)=dX_H(x_0(t))v(t).
\]
Moreover, the vertical component of $v(t)$ does not vanish identically.

Denote by $\psi_t$ the flow of $X_H$. Let $v(t_0)$ have non-vanishing vertical component and let $v_0:=d\psi_{t_0}^{-1}v(t_0)$. Then
\begin{equation}\label{eq1}
v(t)=d\psi_tv_0,
\end{equation}
since both satisfy the same differential equation with the same initial condition. In particular, $d\psi_1v_0=v_0$ contradicting the non-degeneracy of $x_0$.

It remains to prove Equation~\eqref{eq2}. We have
\begin{align}
\dot{v}_n&=\frac{\dot{x}_n-\dot{x_0}}{d_n}\notag\\
&=\frac{\dot{x}_n-X_H\circ x_n}{d_n}+\frac{X_H\circ x_n-X_H\circ x_0}{d_n}\notag\\
&=\frac{\dot{x}_n-X_H\circ x_n}{d_n}+dX_H\circ x_0(v_n)+\frac1{d_n}O(d_n^2).\notag
\end{align}
 Thus
\[
\int_0^1\|\dot{v}_n(t)-dX_H( x_0(t))(v_n(t))\|dt\leq \int_0^1\frac{\|\dot{x}_n-x_H\circ x_n\|}{d_n}+\frac1{d_n}O(d_n^2).
\]
This goes to $0$ by the assumption, completing the proof.
\end{proof}
\begin{proof}[Proof of Lemma \ref{lm_est1}]
Since $a$ is not in the period spectrum and the metric is conical we have that any loop with $E_{H^0}$-energy less than a given constant is contained a priori in some compact set. Thus the claim of the lemma holds for $\rho=0$ by a standard Arzela-Ascoli argument. It remains prove that for a given $\delta$, an $\epsilon$ can be chosen such that the claim will hold for  all $\rho\geq0$. Let $x$ satisfy $E_{H^\rho}(x)\leq \epsilon$. For $\epsilon$ small enough, independently of $\rho$, this implies $x$ is contained either in $B_1^\rho$ or $B_3^\rho$. In the first case, since $H^0=H^\rho$ on $B_1^{\rho}$, the claim is clear. In the second case, $x$ is conjugate via scaling by $e^{-\rho}$ to the loop $x_0:=e^{-\rho}x$ such that $E_{H^0}(x_0)<e^{-\rho}\epsilon$. See \S\ref{SeqACS} for the scaling behaviour of the norm under multiplication by $e^\rho$. So, fixing $\delta$ and taking $\epsilon$ small enough, independently of $\rho$, there is an $\omega_0$-periodic geodesic $x_1$ of $H^0$ satisfying $d(x_1,x_0)<\delta$. If $\delta$ is small enough then according to \eqref{eqDistEnESt} we have $d^2(x_0,x_1)<c^2E_{H^0}(x^0)=e^{-\rho}E_{H^\rho}(x)$. Scaling back up, we get the periodic orbit $x_2:=e^\rho x_1$ of $H^\rho$ and the estimate $d^2(x,x_2)<c^2E_{H^\rho}(x_1)$, which implies the required estimate.
\end{proof}
\begin{proof}[Proof of Lemma \ref{lm_est2}]
We rely on the following property of Floer trajectories which is stated in \cite{Salamon1999} for the case of closed symplectic manifolds. There are constants $\hbar$ and $c$  such that
\begin{equation}\label{mean_value_ineq}
\int_{B_{r}(s,t)}|\partial_su|^2<\hbar\quad\Rightarrow\quad|\partial_su|^2(s,t)<\frac8{\pi r^2}\int_{B_{r}(s,t)}|\partial_su|^2+cr^2.
\end{equation}

Strictly speaking, to apply this to our setting we need to first verify a few things. First we need to show that such constants exist in our non-compact setting. For this note that the estimate~\eqref{mean_value_ineq} can be deduced from the corresponding mean value inequality for $J$-holomorphic curves established in \cite{McDuffSalamon2004}. Namely, if $v$ is a $J$-holomorphic curve in a geometrically bounded manifold we have
\[
\int_{B_{r}(s,t)}|\partial_sv|^2<\hbar\quad\Rightarrow\quad|\partial_sv|^2(s,t)<\frac8{\pi r^2}\int_{B_{r}(s,t)}|\partial_sv|^2
\]
for appropriate constants. The estimate \eqref{mean_value_ineq} can be deduced from this by the Gromov trick as in Definition \ref{dfJH} and the discussion right after. The constants in the mean value inequality of \cite{McDuffSalamon2004} depend on the sectional curvature and the $C^2$-norm of $J$, all with respect to the metric determined by $\omega$ and $J$. We thus need to establish the corresponding bounds on the geometry of the metric determined by the Gromov trick. For this note first that in the case of no magnetic form, the sectional curvature of the Gromov metric for a Hamiltonian which is linear at infinity paired with a conical almost complex structure is shown to be bounded in   \cite{Groman2015}. As in first half of the proof of Theorem \ref{tmLinDiss} the magnetic term decays as $r\to\infty$. Moreover this holds true also for all the derivatives of the magnetic term by vector fields of unit norm. It follows that the sectional curvature remains bounded in the presence of the magnetic term. The same reasoning applies to the $C^2$-norm of $J_{X_H}$. The latter is estimated by the $C^2$ norm of $J$ together with the $C^3$ norm of $X_H$.

We also need to verify that for fixed $H^0$, the constants $\hbar$ and $c$ are independent of $H^\rho$. For this note that in \cite{Groman2015} it is shown that the sectional curvature of the Gromov metric is controlled by that of the underlying manifold and the $C^2$-norm of $H$. The latter decreases under rescaling.

Having established the mean value inequality \eqref{mean_value_ineq} with uniform constants, we prove the lemma. In a form more suitable to our applications we may write
\begin{gather}\label{mean_value_ineq_1}
E_{H^{\rho},J}(u;[s-r,s+r]\times S^1)<\hbar\quad\Rightarrow\\\quad E_{H^{\rho},J_s,\sigma_s}(u_s)<\frac8{\pi r^2}E_{H^{\rho},J}(u;[s-r,s+r]\times S^1)+cr^2,\notag
\end{gather}
for $r\leq 1$. Given $\epsilon>0$ as in the lemma, let $b=\min\{1,\sqrt{\frac{\epsilon}{2c}}\}$. Without loss of generality $E_{H^{\rho},J}(u;[s-b,s+b]\times S^1)<\hbar$ for otherwise we are done. Then plugging into  \eqref{mean_value_ineq_1} we get
\[
\frac8{\pi b^2}E_{H^{\rho},J}(u;[s-b,s+b]\times S^1)>\epsilon-cb^2\geq\epsilon/2.
\]
The lemma thus holds with $\delta=\frac{\pi b^2\epsilon}{16}$.

\end{proof}

\section{Applications}
\label{sec:applications}
\subsection{Vanishing and the Hofer-Zehnder capacity}
In \cite{Groman2015} the first named author associates a non-Archimedean valued algebra $SH^*(M|K)$, the localized symplectic cohomology, to any compact subset $K\subset M$ in a geometrically bounded symplectic manifold $M$. This is the Floer homology of the function
\[
H_K(x):=\begin{cases}0,&\quad x\in K,\\
          \infty,&\quad x\not\in K.
    \end{cases}
\]
It generalizes a construction by Cieliebak-Floer-Hofer \cite{CieliebakFloerHofer1995}, and should be compared similar ideas of Venkatesh \cite{Venkatesh2018} and Varolgunes \cite{Varolgunes2018}. It can be computed as follows. Let $H_i$ be a monotone sequence of dissipative Hamiltonians converging to $0$ on $K$ and to $\infty$ outside of $K$. Then
\[
SH^*(M|K)=\varprojlim_a\varinjlim_iHF^*_{[a,\infty)}(H_i).
\]
The algebra $SH^*(M|K)$ with its valuation are independent of the choice of the sequence $H_i$. To simplify the discussion, we assume the sequence $H_i$ is constructed as follows. Let $F_K:M\to\R$ be a dissipative exhaustion function such that for some $c>0$ $F_K^{-1}([-c,0])=K$, $F_K^{-1}(0)=\partial K$, and $F_K$ has no non-trivial periodic orbits of period $\leq 1$. Such a function exists as shown in \cite{Groman2015}. Namely, one takes an appropriate smoothing of the distance function to $K$ and multiplies it by a small enough constant.   Let $h_i:\R_+\to(-\epsilon,\infty)$ be a monotone sequence converging to $0$ on $[0,1]$, to $\infty$ everywhere else and such that $h_i'=1$ on $(2,\infty)$. Then we take $H_i:=h_i\circ F_K$.

Given any smooth dissipative Hamiltonian $H$, there is a natural map
\[
HF^*(H)\to SH^*(M|K),
\]
defined by taking the direct limit over continuation maps $H\to H_i$. Note that this map is not necessarily valuation preserving, unless we assume in addition $H\leq H_K$. Given a family of dissipative Hamiltonians $\mathcal{H}$ as in the discussion preceding Theorem \ref{tmProduct} , we get an induced map
\[
SH^*(M;\mathcal{H})\to SH^*(M|K).
\]
This is a unital ring map \cite{Groman2015}. In particular,
\begin{lem}
Let $\mathcal{H}$ be as above, and suppose $SH^*(M;\mathcal{H})=\{0\}$. Then $SH^*(M|K)=\{0\}$ for any compact $K\subset M$.
\end{lem}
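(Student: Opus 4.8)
The plan is to exploit the unital ring map
\[
\Psi \colon SH^*(M;\mathcal{H})\to SH^*(M|K)
\]
constructed immediately above, obtained by taking the direct limit of the continuation maps $H\to H_i$ over a cofinal sequence $H_i$ in $\mathcal{H}$ computing $SH^*(M|K)$; the hypothesis that $\mathcal{H}$ be closed under composition of flows is exactly what guarantees this map is defined and is a unital ring homomorphism. The argument is then purely algebraic: unital ring homomorphisms out of the zero ring force the target to be trivial.

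First I would recall that $SH^*(M;\mathcal{H})$ carries a unit $1_{\mathcal{H}}$, coming from $HF^*(H_0)\cong H^*(M;\Lambda_\omega)$ as in the last theorem of Section \ref{sec:disspative}, and that by construction $\Psi(1_{\mathcal{H}}) = 1_K$, the unit of $SH^*(M|K)$. Now assume $SH^*(M;\mathcal{H}) = \{0\}$. Then $1_{\mathcal{H}} = 0$, so since $\Psi$ is additive $1_K = \Psi(1_{\mathcal{H}}) = \Psi(0) = 0$.

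Finally, in any unital ring $R$ with $1 = 0$ one has, for every $x\in R$, the chain $x = x\cdot 1 = x\cdot 0 = 0$, so $R = \{0\}$. Applying this with $R = SH^*(M|K)$ gives $SH^*(M|K) = \{0\}$, as claimed.

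I do not expect any genuine obstacle here: the content is entirely in the earlier-established fact that $\Psi$ is a \emph{unital} ring map, which is cited from \cite{Groman2015}. The only point requiring a line of care is checking that the hypotheses on $\mathcal{H}$ (closed under composition of flows, all Floer data dissipative with metrics uniformly equivalent to a fixed one) place us in the setting where that citation applies — but this is assumed in the statement of the lemma, so no new work is needed.
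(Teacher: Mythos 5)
Your proposal is correct and matches the paper's argument, which simply states that the vanishing of $SH^*(M|K)$ is an immediate consequence of the unitality of the map $SH^*(M;\mathcal{H})\to SH^*(M|K)$. You have merely spelled out the standard algebraic fact that a unital ring map from the zero ring forces the target's unit, and hence the target, to vanish.
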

\begin{proof}
This is an immediate consequence of unitality.
\end{proof}

Recall the definition of the $\pi_1$-sensitive Hofer Zehnder capacity $c^o_{HZ}(K)$ of compact sets $K\subset M$ which is defined as follows. Call a smooth function $H$ supported on $K$ \textit{Hofer-Zehnder admissible} if  $H\leq 0$, $H^{-1}(\min H)$ and $H^{-1}(0)$ both contain non-empty open sets, and the Hamiltonian flow of $H$ has no non-trivial contractible periodic orbits of period $\leq 1$. We say that $c^o_{HZ}< c$ if any Hofer-Zehnder admissible $H$ has oscillation $<c$.

The rest of this section is devoted to the following Theorem.
\begin{thm}\label{tmHZVan}
Suppose $SH^*(M|K)=\{0\}$. Then $c_{HZ}^o(K)<\infty.$
\end{thm}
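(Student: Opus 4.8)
The plan is to argue by contradiction, feeding the hypothesis into the localized symplectic cohomology $SH^*(M|K)$ and the dissipative Floer machinery of Sections \ref{sec:disspative}--\ref{sec:estimate_proofs}. Suppose $c_{HZ}^o(K)=\infty$. Then for every $c>0$ there is a Hofer--Zehnder admissible Hamiltonian $G$ supported in $K$ with $\operatorname{osc}(G)>c$; normalising, we may assume $\max G=0$, attained on a nonempty open set $U_+\subset K$, and $\min G=-c$, attained on a nonempty open set $U_-\subset K$. Since $G$ is supported in $K$ it is not proper, so the first step is to replace it by a dissipative Hamiltonian $\widetilde G$ which agrees with $G$ on $K$ and, outside a small collar of $K$, has the radial shape $h\circ F_K$ with $0<h'\le 1$; the interpolation is carried out in the region where $F_K$ has no contractible periodic orbit of period $\le 1$, so that no fast nonconstant $1$-periodic orbit is created. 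One checks that $\widetilde G$ is dissipative, that $\widetilde G\le H_K$, and --- crucially --- that it inherits from $G$ the property that all its contractible $1$-periodic orbits are constant.

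The key structural feature of $\widetilde G$ is this. After a $C^2$-small time-dependent perturbation, $CF^*(\widetilde G)$ is generated by critical points alone, and with the convention $\mathcal A_H([\gamma,w])=-\omega(w)-\int H$ the generators supported near $U_+$ lie in the action window $(-\delta,\delta)$ while those near $U_-$ lie in $(c-\delta,c+\delta)$; thus the two ``ends'' of $HF^*(\widetilde G)$ are separated in action by $\operatorname{osc}(G)-2\delta$, and the spectral number of the unit of $HF^*(\widetilde G)$ (represented by the minima of $\widetilde G$, which lie near $U_-$) equals $\operatorname{osc}(G)$ up to $O(\delta)$. We now compare with the cofinal sequence $H_i=h_i\circ F_K$ defining $SH^*(M|K)$: after a harmless shift of $G$, we have $\widetilde G\le H_i$ for all large $i$, so there are monotone continuation maps $HF^*(\widetilde G)\to HF^*(H_i)$ whose composite with $HF^*(H_i)\to SH^*(M|K)$ is the natural, unital map $HF^*(\widetilde G)\to SH^*(M|K)$ discussed before the theorem; since $\widetilde G\le H_K$ this composite is valuation non-decreasing.

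Because $SH^*(M|K)=\{0\}$, the unit of $HF^*(\widetilde G)$ maps to $0$. Unwinding $SH^*(M|K)=\varprojlim_a\varinjlim_i HF^*_{[a,\infty)}(H_i)$, this forces the class of the unit to be annihilated in some finite truncation of the defining system; at the chain level this is possible only because the $H_j$ acquire (in the steep transition region of $h_j$) periodic orbits arbitrarily far out in the action filtration. Running the $C^0$- and energy estimates of Theorem \ref{tmDissDaimEst} and its domain-local refinement (Remark \ref{rmRobust}) along the monotone homotopies from $\widetilde G$ to the $H_j$, and along the homotopies implicit in the identification of $HF^*(\widetilde G)$ with ordinary cohomology --- all of which again have only constant $1$-periodic orbits, so that the separation of the two action windows is maintained throughout --- a careful count of action levels shows that this annihilation can occur only if $\operatorname{osc}(G)$ is bounded above by a finite constant depending on $K$ and on the fixed auxiliary data ($F_K$, the collar, the interpolating families), but not on $G$. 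This contradicts the choice of $G$ once $c$ exceeds that constant, and hence $c_{HZ}^o(K)<\infty$. (Checking that the map to $SH^*(M|K)$ is unital uses, in addition, the pair-of-pants estimates, available in the dissipative setting exactly as in the construction of the ring structures in Sections \ref{sec:disspative}--\ref{sec:estimate_proofs}.)

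I expect the genuinely delicate point to be this last, quantitative, step: converting the purely qualitative input ``$SH^*(M|K)=0$'' into a finite ceiling on $\operatorname{osc}(G)$. It breaks into (i) choosing the capping-off $\widetilde G$ and all interpolating Floer data carefully enough that no short periodic orbits appear and the separation of action windows survives every homotopy used --- here Hofer--Zehnder admissibility of $G$, i.e.\ the absence of fast nonconstant contractible periodic orbits, is used in an essential way; (ii) propagating the $C^0$- and energy estimates of Section \ref{sec:disspative} through these non-compactly-supported modifications; and (iii) bookkeeping of valuations along the inverse--direct system defining $SH^*(M|K)$, invoking valuation non-decrease for continuation maps dominated by $H_K$. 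Granting these, the contradiction --- and with it the finiteness of $c_{HZ}^o(K)$ --- is the standard ``vanishing of symplectic cohomology $\Rightarrow$ finite Hofer--Zehnder capacity'' scheme.
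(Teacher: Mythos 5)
Your overall scheme (extend the admissible Hamiltonian to a dissipative one, compare with the cofinal family $H_i=h_i\circ F_K$, use the vanishing to kill the unit, and extract a bound on the oscillation) is the right one, and you correctly isolate the role of Hofer--Zehnder admissibility. But the quantitative core of the argument -- the step you yourself flag as ``the genuinely delicate point'' and summarise as ``a careful count of action levels shows that this annihilation can occur only if $\operatorname{osc}(G)$ is bounded'' -- is exactly what needs to be proved, and as set up your comparison cannot produce it. You compare in the direction $\widetilde G\leq H_i$ and push the unit forward along $HF^*(\widetilde G)\to HF^*(H_i)\to SH^*(M|K)$. The conclusion that the unit dies in the limit is then automatic from the hypothesis and carries no information about $\operatorname{osc}(G)$: continuation maps for $H_1\leq H_2$ only fail to decrease valuation, and the zero class has valuation $+\infty$, so nothing obstructs the unit of $HF^*(\widetilde G)$ from dying no matter how large its spectral number is. In terms of spectral invariants, $\widetilde G\leq H_i$ gives $\rho(H_i;1)\leq\rho(\widetilde G;1)$, which is compatible with any value of $\operatorname{osc}(G)$.

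The paper closes the argument with two ingredients you are missing. First, Lemma \ref{lmSlowHam} (due to Usher): for an autonomous dissipative Hamiltonian with no non-trivial contractible periodic orbits of period $\leq 1$, the spectral invariant of the unit equals $-\min H$. You assert the conclusion of this (``the spectral number of the unit \dots equals $\operatorname{osc}(G)$'') but it is a non-trivial fact -- a priori the unit could be represented by a cycle of much lower action -- and it must be invoked or proved. Second, and decisively, the comparison must run in the \emph{opposite} direction: after the standardized extension, if $A:=\operatorname{Osc}_K(G)$ exceeds a constant $A_0$ depending only on $K$ and the fixed data, then $G+A\geq H_{i_0}$, so monotonicity transfers the negativity $\rho(H_{i_0};1)\leq -1$ (which the vanishing of $SH^*(M|K)$ supplies for $i_0$ large) to $\rho(G+A;1)\leq -1$, contradicting $\rho(G+A;1)=-\min(G+A)=0$ from the slow-Hamiltonian lemma. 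It is this domination of a fixed member of the cofinal family by the shifted admissible Hamiltonian that converts the qualitative vanishing into the uniform ceiling $A_0$ on the oscillation; without it the contradiction never materialises.
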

To prove the theorem we first need to discuss spectral invariants for dissipative Hamiltonians. In \cite{Groman2015} it was shown that there is a natural map $f:H^*(M;\Lambda_{\mathrm{univ}})\to HF^*(H)$ for any dissipative Hamiltonian $H$. Note that unlike in the compact case this map may not be an isomorphism. Nevertheless, this allows us to define the spectral invariant
\[
\rho(H;1):=\val(f(1_{H^*(M;\Lambda_{\mathrm{univ}})})).
\]
Unlike in the compact case, we may well have $\rho(H;1)=-\infty$. The spectral invariant is monotone in the sense that
\[
H_1\leq H_2 \Rightarrow \rho(H_2;1)\leq \rho(H_1;1).
\]
Further, we have the following property for whose proof we refer to Usher's \cite[Theorem 4.1]{Usher2010a}.
\begin{lem}\label{lmSlowHam}
If $H$ is an autonomous dissipative Hamiltonian, otherwise possibly degenerate, and if $H$ possesses no non-trivial contractible periodic orbits of period $\leq 1$ then $\rho(H;1)=-\min H.$
\end{lem}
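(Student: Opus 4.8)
The plan is to transplant Usher's computation of the spectral number of the unit \cite{Usher2010a} from the closed case to the dissipative setting; the one genuinely new ingredient is the a-priori $C^0$-confinement of periodic orbits and (continuation) Floer trajectories for dissipative data from \cite{Groman2015}, recalled in Section \ref{sec:disspative}, which is exactly what makes the closed-case estimates available here. The first point to record is that the hypothesis is inherited by the whole rescaled family $\{\lambda H\}_{\lambda\in(0,1]}$: a non-constant contractible $1$-periodic orbit of $X_{\lambda H}=\lambda X_H$ becomes, after the reparametrisation $t\mapsto\lambda t$, a non-constant $\lambda$-periodic orbit of $X_H$, whose minimal period is at most $\lambda\le 1$ --- which is excluded. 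Consequently, for every $\lambda\in(0,1]$ the contractible $1$-periodic orbits of $\lambda H$ are the constant orbits at $\crit H$ (a set that meets only a fixed compact set by properness and dissipativity), the action spectrum of $\lambda H$ is $\{-\lambda H(p):p\in\crit H\}$, and the maximum of this spectrum is $-\lambda\min H$.

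Next I would anchor the computation at small $\lambda$. For $\lambda$ small, $\lambda H$ has slope below the shortest geodesic period, so by Lemma \ref{lem:slope_not_period_closed_geodesic} (which confines the remaining orbits to a fixed compact set) together with the standard small-Hamiltonian argument $HF^*(\lambda H)$ reduces to $H^*(T^*N;\Lambda_{\mathrm{univ}})$ with $f$ the PSS isomorphism; in particular $f(1)\ne 0$ and $\rho(\lambda H;1)=-\lambda\min H$. To push this to $\lambda=1$ I would pass to a $C^2$-small Morse perturbation $H_\varepsilon$ of $H$ --- still dissipative, since dissipativity is open in the $C^0$-topology, cf. the proof of Theorem \ref{tmContinuatiation} --- so that, for each $\lambda$, the number $\rho(\lambda H_\varepsilon;1)$ depends continuously on $\lambda$ and lies in the \emph{finite} set $\{-\lambda H_\varepsilon(p):p\in\crit H_\varepsilon\}$. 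Then $\lambda\mapsto\rho(\lambda H_\varepsilon;1)/\lambda$ is a continuous function into a finite set, hence constant, and by its value at small $\lambda$ it equals $-\min H_\varepsilon$; thus $\rho(H_\varepsilon;1)=-\min H_\varepsilon$. Letting $\varepsilon\to 0$ and using that $\rho(\,\cdot\,;1)$ is $1$-Lipschitz in the $C^0$-norm --- an immediate consequence of the monotonicity recorded above together with the shift identity $\rho(F+c;1)=\rho(F;1)-c$, which follows from $\mathcal{A}_{F+c}=\mathcal{A}_F-c$ --- yields $\rho(H;1)=-\min H$.

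The step I expect to be the main obstacle is not any single computation but the bookkeeping needed to run this argument in the non-compact dissipative category: one needs $\rho(\,\cdot\,;1)$ to be a well-defined real number lying in the action spectrum, to be monotone and $1$-Lipschitz, and $f$ to be a PSS isomorphism in the small-slope range, each of which is part of, or a formal consequence of, the framework of \cite{Groman2015}. The genuinely delicate ingredient --- the one place where some care beyond \cite{Usher2010a} is needed --- is the perturbation step, because $H$ is only assumed dissipative (possibly degenerate): passing to $H_\varepsilon$ may create non-constant contractible orbits near the critical set of $H$. By the $C^0$-confinement of \cite{Groman2015}, such an orbit either converges, as $\varepsilon\to 0$, to a non-constant $1$-periodic orbit of $H$ (excluded by hypothesis) or else shrinks onto a critical point of $H_\varepsilon$, in which case its action differs from a critical value of $H_\varepsilon$ by $o(1)$; hence the action spectrum of $\lambda H_\varepsilon$ still converges to $\{-\lambda H(p):p\in\crit H\}$ and $-\min H_\varepsilon$ remains, up to $o(1)$, its supremum --- which is all that the argument above actually uses. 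Carrying out this accounting precisely, and checking that the PSS map, the module structure over $H^*(T^*N;\Lambda_{\mathrm{univ}})$, and the action-filtration estimates behave in the dissipative setting exactly as on closed manifolds, reduces the lemma to the computation of \cite{Usher2010a}.
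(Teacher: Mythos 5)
First, a point of comparison: the paper does not prove this lemma at all --- it is stated with ``for whose proof we refer to Usher's \cite{Usher2010a}'' --- so your proposal is an attempt to reconstruct the argument the authors outsource. Your skeleton (rescale to $\lambda H$, anchor at small $\lambda$ via PSS, use spectrality and continuity in $\lambda$, then remove the perturbation by Lipschitz continuity of $\rho$) is indeed Usher's, and your identification of what must be re-verified in the dissipative setting (spectrality, monotonicity, the shift identity, the small-slope PSS isomorphism, $C^0$-confinement) is correct and is the genuinely new bookkeeping. However, the step that carries all the weight --- ``$\lambda\mapsto\rho(\lambda H_\varepsilon;1)/\lambda$ is a continuous function into a finite set, hence constant'' --- has two genuine gaps, and they are exactly the difficulties that Usher's paper exists to overcome, so the lemma is not reduced to a routine verification by your argument.

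First, the action spectrum of the constant orbits is not $\{-\lambda H(p):p\in\crit H\}$ but $\{-\lambda H(p):p\in\crit H\}+\Gamma_\omega$, where $\Gamma_\omega$ is the image of $I_\omega$ on $\pi_1(\mathcal{L}^0M)$ entering the Novikov completion. In the situation where the lemma is actually applied --- $(T^*N,\omega_\sigma)$ with $\sigma$ not weakly exact --- this group is nontrivial and can be dense in $\R$, so $\rho(\lambda H;1)/\lambda$ takes values in $\{-H(p)\}+\lambda^{-1}\Gamma_\omega$, a set that is neither finite nor independent of $\lambda$; the ``continuous into a fixed finite set, hence constant'' conclusion is simply unavailable. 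Second, the Morse perturbation is self-defeating: the hypothesis ``no non-constant contractible orbits of period $\leq 1$'' is not $C^2$-open (in the local model $H=\pi|z|^2-|z|^4$ all nearby non-constant orbits have period $>1$, but adding $\varepsilon|z|^2$ creates orbits of period $<1$), so for a range of $\lambda$ the complex of $\lambda H_\varepsilon$ contains non-constant $1$-periodic orbits whose actions do not lie in $\lambda$ times a fixed finite set. Your patch --- that the spectrum of $\lambda H_\varepsilon$ converges as $\varepsilon\to 0$ and that $-\min H_\varepsilon$ remains its approximate supremum --- does not repair this: the locally-constant argument needs rigidity of the spectrum for \emph{fixed} $\varepsilon$ and \emph{varying} $\lambda$, and knowing only the supremum of the spectrum does not identify which spectral value $\rho$ takes. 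Usher's proof deliberately avoids perturbing $H$ and instead analyses the continuous path $\lambda\mapsto\rho(\lambda H;1)$ inside the countable union of lines $\{(\lambda,-\lambda H(p)+g):p\in\crit H,\ g\in\Gamma_\omega\}$ directly; to complete your write-up you would need to import that finer argument rather than the finite-set shortcut.
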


\begin{lem}
Suppose $SH^*(M|K)=\{0\}$. Then for any $a>0$ there is an $i_0$ such that for any $i\geq i_0$ we have, for $H_i=h_i\circ F_K$ as above,
\[
\rho(H_i;1)\leq -a.
\]

\end{lem}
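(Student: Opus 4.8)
The plan is to read off the bound directly from the hypothesis $SH^*(M|K)=\{0\}$, using the limit description $SH^*(M|K)=\varprojlim_c\varinjlim_iHF^*_{[c,\infty)}(H_i)$ (I write $c$ for the truncation parameter, reserving $a$ for the quantity in the statement), the naturality of the unit map $f$, and monotonicity of spectral invariants.

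Recall that for every dissipative Hamiltonian $H$ the natural map $HF^*(H)\to SH^*(M|K)$ is the composite of the continuation maps $HF^*(H)\to HF^*(H_i)$ with the canonical maps $HF^*(H_i)\to\varinjlim_iHF^*_{[c,\infty)}(H_i)$ and the projection $\varprojlim_c\to\varinjlim_iHF^*_{[c,\infty)}(H_i)$ onto the $c$-th factor. Since every map into the zero group is zero, applying this to $H=H_j$ gives that $f(1)\in HF^*(H_j)$ maps to $0\in SH^*(M|K)$. Now fix $a>0$ and set $c:=-a$; then the image of $f(1)$ in $\varinjlim_iHF^*_{[-a,\infty)}(H_i)$ is zero, being a factor of the zero element of the inverse limit. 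Because $f$ commutes with continuation maps and the truncation projections $HF^*(H_i)\to HF^*_{[-a,\infty)}(H_i)$ commute with the continuation maps of the directed system, the image of $f(1)$ in $\varinjlim_iHF^*_{[-a,\infty)}(H_i)$ is represented at each stage $i$ by the projection of $f(1)\in HF^*(H_i)$; and a class vanishing in a direct limit vanishes at a finite stage. Hence there is an $i_0\ge j$ such that the projection of $f(1)\in HF^*(H_{i_0})$ to $HF^*_{[-a,\infty)}(H_{i_0})$ is already zero.

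It remains to convert this into an estimate on $\rho(H_{i_0};1)$ and then propagate it to all larger $i$. Since $HF^*_{[-a,\infty)}(H_{i_0})$ is the cohomology of the quotient of $CF^*(H_{i_0})$ by the subcomplex spanned by generators of action $<-a$, vanishing of the projected class means exactly that $f(1)$ admits a cocycle representative all of whose generators have action $<-a$; hence $\rho(H_{i_0};1)=\val(f(1))\le -a$. Finally, for $i\ge i_0$ we have $H_{i_0}\le H_i$, so the monotonicity $H_1\le H_2\Rightarrow\rho(H_2;1)\le\rho(H_1;1)$ recorded above gives $\rho(H_i;1)\le\rho(H_{i_0};1)\le -a$, which is the claim.

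The only real content is that $SH^*(M|K)=\{0\}$ forces the unit to vanish at every action level; the point needing care is the bookkeeping in the middle paragraph, namely that the unit map $f$ and the action truncations interact correctly with the continuation maps, and that vanishing in truncated Floer cohomology translates into the spectral bound. Both are standard: the former from the constructions in \cite{Groman2015}, the latter from the definition of the induced valuation $\val$.
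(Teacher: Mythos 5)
Your argument is correct and takes essentially the same route as the paper's proof: vanishing of $SH^*(M|K)$ forces the unit class to vanish in $\varinjlim_i HF^*_{[-a,\infty)}(H_i)$, hence at some finite stage $i_0$, which by the definition of the truncated complex and of $\val$ yields $\rho(H_{i_0};1)\le -a$. The only cosmetic difference is that you propagate the bound to all $i\ge i_0$ via monotonicity of the spectral invariant, whereas the paper leaves this step implicit (it follows equally from the compatibility of the unit class with continuation maps); both are fine.
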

\begin{proof}
Under the vanishing assumption we have that $1$ maps to $0$ in $\varinjlim_i HF^*_{[-a,\infty)}(H_i)$ for any $a>0$. This in turn means that for $i$ large enough $1$ maps to $0$ in $HF^*_{[-a,\infty)}(H_i)$. By definition, this means $\rho(H_i;1)\leq -a$.
\end{proof}
\begin{proof}[Proof  of Theorem \ref{tmHZVan}]
Let $H$ be a Hofer-Zehnder admissible function defined on $K$. We smoothly extend $H$ to a proper exhaustion function on $M$ of the form $h\circ F_K$ with $h$ convex and such that $h'=1$  outside of on open set containing $K$. We fix this $h$ independently of the function $H$. By definition of $F_K$, $H$ thus extended has no non-trivial periodic orbits outside of $K$. Fix $i_0$ such that $\rho(H_i;1)\leq-1$ for all $i\geq i_0$.  There is a constant $A_0$ such that if $A:=\operatorname{Osc}_K(H)>A_0$ we have that $H+A\geq H_i$. Thus,
\[
\rho(H+A;1)\leq \rho(H_i)\leq-1<0=-\min(H+A).
\]
But by Lemma \ref{lmSlowHam}, any such function has a non-trivial contractible periodic orbit of period $\leq 1$. It follows that $\operatorname{Osc}_K(H)\leq A_0$.
\end{proof}
\subsection{Obstructions to Lagrangian embeddings}
\begin{thm}\label{thm:LagBdDisc}
Suppose $SH^*(T^*M \! \colon \!  \omega_\sigma)=\{0\}$. Then for any closed embedded $\omega_\sigma$-Lagrangian $L\subset M$ there is a real number $E$ such that for any $\omega_\sigma$-tame roughly conical almost complex structure $J$, we have that $L$ either bounds a non-constant $J$-holomorphic disk or meets a non-constant $J$-holomorphic sphere of energy $\leq E$.
\end{thm}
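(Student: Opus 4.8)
The strategy is to argue by contradiction, using the vanishing of $SH^*(T^*M;\omega_\sigma)$ to build a unital ``augmentation'' of it with nonzero value on the unit. Fix a roughly conical $\omega_\sigma$-tame almost complex structure $J$ and suppose $L$ bounds no non-constant $J$-holomorphic disc (with boundary on $L$) and meets no non-constant $J$-holomorphic sphere. Since $SH^*(T^*M;\omega_\sigma)=\varinjlim_i HF^*(H_i,\omega_\sigma)$ along a cofinal family of dissipative Hamiltonians in $\mathcal H$, and the unit of $SH^*$ is the image of $1\in H^*(T^*M;\Lambda_{\omega_\sigma})=HF^*(H_0,\omega_\sigma)$ for a Hamiltonian $H_0\in\mathcal H$ of sufficiently small slope, the vanishing hypothesis says there is a dissipative $H_1\in\mathcal H$, linear at infinity, with $H_0\le H_1$, whose continuation map sends $1$ to $0$ in $HF^*(H_1,\omega_\sigma)$. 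Fix such $H_0\le H_1$, chosen (cf.\ the remark following Theorem~\ref{lmMainEstimate}) to agree near the compact Lagrangian $L$ with a fixed $C^2$-small Morse function possessing a single local minimum, at a generic point $q\in L$.

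The main construction is a count of \emph{thimbles}. Let $\Sigma$ be the closed disc with one interior puncture at the origin, equipped with a positive cylindrical end there and a boundary marked point, and let $\mathfrak H$ be a Hamiltonian-valued $1$-form on $\Sigma$ equal to $H_i\,dt$ on the end and vanishing near $\partial\Sigma$. For $\gamma\in\mathcal P(H_i,\sigma)$ one considers finite-energy solutions $u\colon\Sigma\to T^*M$ of the perturbed Floer equation for the fixed $J$ (allowing only a small $z$-dependent interior perturbation of $\mathfrak H$) with $u(\partial\Sigma)\subset L$, with $u$ asymptotic at the puncture to $\gamma$, and with the boundary marked point mapped to $q$. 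Counting rigid $u$ defines a $\Lambda_{\omega_\sigma}$-linear chain map $\varepsilon_{H_i}\colon CF^*(H_i,\omega_\sigma)\to\Lambda_{\omega_\sigma}$ in the appropriate degree, functorial under the continuation map $CF^*(H_0,\omega_\sigma)\to CF^*(H_1,\omega_\sigma)$. Its well-definedness rests on three inputs from this paper: (a) the dissipative $C^0$-bounds of Theorems~\ref{tmDissDaimEst} and~\ref{tmEneCompCont}, in the domain-local form of Remark~\ref{rmRobust}, confine any such $u$ of energy $\le E$ to a compact subset of $T^*M$ independent of $J$, since $L$ is compact and $\mathfrak H$ is linear at infinity; (b) the action--energy identity for thimbles together with the estimate~\eqref{eq_main_estimate} of Theorem~\ref{lmMainEstimate}, using that $\|\pi^*\sigma\|_\infty$ and the Hamiltonian-correction terms are small on and near $L$, bound the energy of the rigid thimbles, the continuation configurations and the chain homotopies by a constant $E=E(H_1,L,\sigma,g)$ that is independent of $J$; (c) transversality is obtained by a generic interior perturbation of $\mathfrak H$ with $J$ kept fixed, so that every bubble occurring in a Gromov limit of such configurations of energy $\le E$ is a genuine non-constant $J$-holomorphic sphere in $T^*M$ or a genuine non-constant $J$-holomorphic disc with boundary on $L$; moreover, for the moduli relevant to the unit --- thimbles asymptotic to the constant orbit at $q$ --- the principal component may degenerate only to the constant map at $q$, so the resulting bubble tree is rooted at $q$ and meets $L$. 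By hypothesis no such bubble exists, so the relevant moduli are compact and $\varepsilon_{H_i}$ is a well-defined chain map.

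The contradiction is then immediate. A PSS computation identifies $\varepsilon_{H_0}(1)$ with the count of small $J$-holomorphic discs through $q$ in the trivial homotopy class, which is $1$. On the other hand, by functoriality under continuation together with the hypothesis that $1$ dies in $HF^*(H_1,\omega_\sigma)$,
\[
1=\varepsilon_{H_0}(1)=\varepsilon_{H_1}\big(\mathrm{continuation}(1)\big)=\varepsilon_{H_1}(0)=0,
\]
which is absurd. Hence for every roughly conical $\omega_\sigma$-tame $J$, $L$ bounds a non-constant $J$-holomorphic disc or meets a non-constant $J$-holomorphic sphere; tracking energies through the degenerating sequence of thimbles, one such object has $\omega_\sigma$-energy at most $E=E(L,\sigma,g)$.

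\textbf{Main obstacle.} The analytic heart is input (b) together with a precise Gromov compactness statement for thimbles in the non-compact, dissipative and $\sigma$-deformed setting: one must re-run the $C^0$-estimates of Section~\ref{sec:disspative} and the energy estimates of Section~\ref{sec:estimate_proofs} for domains with Lagrangian boundary rather than cylinders or pairs of pants, rule out escape of energy to infinity (covered by the dissipative bounds) and boundary breaking along chords (automatic, since $\mathfrak H$ vanishes near $\partial\Sigma$ and $L$ is compact), and confirm that disc and sphere bubbling are the only possible failures. Two book-keeping subtleties, important for matching the precise statement, are: securing transversality for the thimbles by perturbing $\mathfrak H$ alone, so that the dichotomy genuinely concerns the given $J$ and the bubbles are honestly $J$-holomorphic; and ensuring that a sphere bubble is forced to touch $L$ --- for which, beyond the constraint at $q$, one may localize, replacing $\mathcal H$ by a cofinal family of Hamiltonians supported near $L$ and invoking the vanishing of the localized symplectic cohomology $SH^*(T^*M\,|\,K)$ for a neighbourhood $K$ of $L$, as guaranteed by the lemma preceding Theorem~\ref{tmHZVan}.
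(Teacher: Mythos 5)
Your argument is correct in outline but takes a genuinely different route from the paper. The paper's proof builds the full Lagrangian Floer cohomology $HF^*(L,L)$ as a unital $\Lambda_\sigma$-algebra --- negating the statement to extract, for each energy cutoff $E_n$, an almost complex structure $J_n$ bounding no discs of energy $\le E_n$, defining the differential over $\Lambda_{\operatorname{univ}}\big/t^{E_n}\Lambda_{\operatorname{univ}}$ and passing to an inverse limit --- and then uses the unital ring map $\mathcal{CO}\colon SH^*(T^*N;\omega_\sigma)\to HF^*(L,L)$ together with the PSS isomorphism $H^*(L;\Lambda_\sigma)\cong HF^*(L,L)$ to derive $0=HF^*(L,L)\neq 0$. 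You instead linearize this: you construct only the chain-level pairing $\langle\mathcal{CO}(\cdot),[q]\rangle$ as an augmentation $\varepsilon$ counting thimbles through a point of $L$, normalize it on the unit, and feed the unit through a continuation map that kills it. What this buys is economy --- no ring structure on $HF^*(L,L)$, no full PSS isomorphism, only the single normalization $\varepsilon_{H_0}(1)=1$ --- and a more direct derivation of the uniform energy bound $E$ (from the action of a fixed primitive of the image of the unit in $CF^*(H_1)$, independent of $J$), whereas in the paper the cutoffs are built into the contradiction hypothesis. The compactness inputs you invoke (Theorems \ref{tmDissDaimEst}, \ref{tmEneCompCont}, \ref{lmMainEstimate} and Remark \ref{rmRobust}) are exactly the ones the paper's construction also rests on.

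One step needs more care than your sketch provides. The identities $\varepsilon\circ d=0$ and $\varepsilon_{H_1}\circ\kappa\simeq\varepsilon_{H_0}$ require compactifying moduli of thimbles asymptotic to \emph{non-constant} orbits, and there a sphere bubble can attach at an interior point of a non-constant principal component; such a sphere need not meet $L$, so the negated hypothesis does not exclude it, and your ``rooted at $q$'' argument only covers the unit moduli. The localization via $SH^*(T^*M\,|\,K)$ that you propose does not address this either: thimbles for Hamiltonians supported near $L$ are confined by dissipativity to a compact set, not to a neighbourhood of $L$. The relevant tool is rather \eqref{eq:c1=0}: since $c_1(T^*N,\omega_\sigma)=0$ the manifold is semi-positive and interior sphere bubbling is handled by the standard Hofer--Salamon-type argument --- a point the paper itself also leaves implicit under ``standard Floer theory'', and which sits in tension with keeping $J$ completely fixed rather than generic.
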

\begin{proof}
Suppose otherwise. That is, there is a Lagrangian submanifold $L$ so that for each $E$ there is a $J_E$ so that $L$ bounds no $J_E$ holomorphic disc and intersects no $J_E$-holomorphic sphere.   Then standard Floer theory gives rise to  the following. Associated to $L$ there is a unital $\Lambda_{\operatorname{univ}}$-algebra $HF^*(L,L\! \colon \! \omega_\sigma)$, the Lagrangian intersection Floer homology, which carries a unital map $\mathcal{CO}:SH^*(M \! \colon \!  \omega_\sigma)\to HF^*(L,L\! \colon \! \omega_\sigma)$. 

We elucidate this. Pick a sequence of energies $E_n\to\infty$ and a sequence of almost complex structures $J_n$ such that $L$ bounds no $J_n$-holomorphic discs of energy $\leq E_n$. Such a sequence exists by the current assumption and Gromov compactness. For a Hamiltonian $H$ which is linear at infinity we define the Floer complex $CF^*(L,L,H\! \colon \!  \omega_\sigma)$ generated by Hamiltonian chords starting and ending on $L$.  We define the Floer differential by counting solutions to Floer's equation strips with boundaries on $L$ with. The compactness for these is the same as for Floer cylinders. The assumption on discs implies that after possibly a slight perturbation of $J_n$ to achieve regularity the count of Floer strips with action difference in $[0,E_{n}]$ defines a differential on over $\Lambda_{\operatorname{univ}}\big/t^{E_n}\Lambda_{\operatorname{univ}}$. Taking an inverse limit over $E_n$, we obtain the module $HF^*(L,L,H\! \colon \!  \omega_\sigma)$ over the Novikov ring $\Lambda_{\operatorname{univ}}$.  

We now consider a monotone sequence $H_n$ of Hamiltonians which are linear at infinity, then there are well defined continuation maps $HF^*(L,L,H_n\! \colon \!  \omega_\sigma)\to HF^*(L,L,H_{n+1}\! \colon \!  \omega_\sigma)$ which can in fact shown to be isomorphisms since $L$ is assumed to be closed. This is established in \cite[Theorem 4.14]{Groman2018}. Moreover, the module $HF^*(L,L\! \colon \!  \omega_\sigma)=\lim_nHF^*(L,L,H_n\! \colon \!  \omega_\sigma)$ is in fact an algebra with the product defined in terms of solutions to Floer's equation on triangles. On the other hand, for a pair of Hamiltonians $H\leq H'$, there is a well defined map $\mathcal{CO}:HF^*(T^*N,H\! \colon \!  \omega_\sigma)\to HF^*(L,L,H'\! \colon \!  \omega_\sigma)$ given by counting appropriate discs with boundary on $L$ with one interior and one boundary puncture. These maps commute with continuation maps. Thus we get an induced map as claimed. It is standard to show that this is a unital algebra map.

A particular consequence of unitality of the map $\mathcal{CO}$ and the vanishing of $SH$ is that $HF^*(L,L\! \colon \!  \omega_\sigma)=0$. On the other hand, there is a PSS-homomorphism of $\Lambda_{\operatorname{univ}}$-modules $H^*(L;\Lambda_{\operatorname{univ}})\to HF^*(L,L\! \colon \!  \omega_\sigma)$.  Under the assumption of no discs, the PSS map is an isomorphism. In particular that $HF^*(L,L\! \colon \!  \omega_\sigma)\neq 0$. This contradiction implies the claim.
\end{proof}

If one is willing to take on board the machinery of \cite{FukayaOhOhtaOno2009} in our setting, we can make the following statement.
\begin{thm}
\label{thm:lagapp}
Suppose $L\subset T^*N$ is a closed Lagrangian submanifold such that the induced map $i_*:H_*(L;\Z)\to H_*(T^*N;\Z)$ is injective. Then $SH^*(T^*N \! \colon \!  \omega_\sigma)\neq0$.
\end{thm}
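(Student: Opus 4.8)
The plan is to argue by contradiction. Suppose $SH^*(T^*N;\omega_\sigma)=\{0\}$ while $i_*\colon H_*(L;\Z)\to H_*(T^*N;\Z)$ is injective; I will derive a contradiction by producing a non-zero ring $HF^*(L,L;\omega_\sigma)$ together with a unital ring homomorphism $\mathcal{CO}\colon SH^*(T^*N;\omega_\sigma)\to HF^*(L,L;\omega_\sigma)$. In contrast to the proof of Theorem~\ref{thm:LagBdDisc}, where one exploits an \emph{a priori} absence of holomorphic discs, here $L$ is allowed to bound holomorphic discs, so the full obstruction-theoretic package of \cite{FukayaOhOhtaOno2009} is needed. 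Throughout, the Floer data are dissipative in the sense of Section~\ref{sec:disspative}: Hamiltonians linear at infinity and roughly conical almost complex structures. Gromov compactness for the moduli spaces of holomorphic discs, strips and punctured discs with boundary on $L$ then reduces to the a priori $C^0$-estimates of Theorems~\ref{tmDissDaimEst} and \ref{tmEneCompCont} and the energy estimate of Theorem~\ref{lmMainEstimate} (adapted to surfaces with boundary, as in the proof of Theorem~\ref{thm:LagBdDisc} and as carried out in \cite{Groman2018}); in particular only finitely many disc classes contribute in any given energy window, which is what makes the Novikov coefficients $\Lambda_\sigma$ meaningful.

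First main step: construction and non-vanishing of $HF^*(L,L;\omega_\sigma)$. Following \cite{FukayaOhOhtaOno2009}, one equips $C^*(L;\Lambda_\sigma)$ with the filtered $A_\infty$-algebra of $L$ in $(T^*N,\omega_\sigma)$ (orientations handled as in \cite{FukayaOhOhtaOno2009}, e.g. for $L$ relatively spin, or over $\mathbb{Z}/2$). The only change from the compact case is that one works with a cofinal sequence of Hamiltonians linear at infinity, so that the requisite disc compactness is supplied by the dissipativity estimates above and independence of the choices follows as in \cite{Groman2018}. Now one uses injectivity of $i_*$: the boundary of any holomorphic disc with boundary on $L$ bounds in $T^*N$, hence is null-homologous in $L$, and the inductive construction of a bounding cochain $b$ as in \cite{FukayaOhOhtaOno2009} (whose obstruction classes are governed precisely by such disc boundaries) goes through, so $L$ is unobstructed. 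For the same reason the higher differentials in the Fukaya--Oh--Ohta--Ono spectral sequence converging to $HF^*((L,b),(L,b);\omega_\sigma)$ with first page $H^*(L;\Lambda_\sigma)$ vanish, whence
\[
HF^*(L,L;\omega_\sigma)\;\cong\;H^*(L;\Lambda_\sigma)\neq0,
\]
and in particular the unit $1=[L]$ is a non-zero class in $HF^0(L,L;\omega_\sigma)$.

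Second main step: the closed--open map. For $H$ linear at infinity and $J$ roughly conical, $\mathcal{CO}$ is defined by counting $J$-holomorphic discs carrying one interior positive puncture asymptotic to a $1$-periodic orbit of $X_{H,\sigma}$, with boundary on $L$ (decorated by $b$); compactness is again guaranteed by the dissipativity $C^0$-bounds together with the $\iota_{X_H}\pi^*\sigma$-type estimate of Theorem~\ref{lmMainEstimate}. Passing to the direct limit over a cofinal sequence of such Hamiltonians and using that the count commutes with continuation maps yields $\mathcal{CO}\colon SH^*(T^*N;\omega_\sigma)\to HF^*(L,L;\omega_\sigma)$; the standard TQFT-type gluing argument (degenerating the domain disc with one interior input and one boundary output) shows it is a unital ring homomorphism. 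Since $SH^*(T^*N;\omega_\sigma)=\{0\}$, we get $1_{HF^*}=\mathcal{CO}(1_{SH^*})=\mathcal{CO}(0)=0$, so $HF^*(L,L;\omega_\sigma)=\{0\}$, contradicting the first step. This proves $SH^*(T^*N;\omega_\sigma)\neq0$.

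The main obstacle is the first step: transporting the Fukaya--Oh--Ohta--Ono obstruction theory to the non-compact dissipative setting and verifying that the implication ``$i_*$ injective $\Rightarrow$ $L$ unobstructed with $HF^*(L,L)\cong H^*(L)$'' survives over the Novikov rings $\Lambda_\sigma$ rather than over the universal Novikov field. The compactness ingredients are provided by Sections~\ref{sec:disspative}--\ref{sec:estimate_proofs}, but the algebraic bookkeeping (the filtered $A_\infty$-structure, the Maurer--Cartan argument, the spectral sequence, and the compatibility of $\mathcal{CO}$ with the product and with continuation maps) must all be checked in this coefficient system. By comparison, constructing $\mathcal{CO}$ itself and checking unitality is a routine extension of the chain-level argument already used for Theorem~\ref{thm:LagBdDisc}.
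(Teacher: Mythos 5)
Your proof is correct and follows the paper's argument exactly: invoke \cite{FukayaOhOhtaOno2009} to deduce from $H_*$-injectivity that $L$ is unobstructed after deforming by a bounding cochain $b$, with $HF^*_b(L,L)$ a non-vanishing unital $\Lambda_\sigma$-algebra, and then use the unital closed--open map to contradict the assumed vanishing of $SH^*(T^*N;\omega_\sigma)$, exactly as in Theorem \ref{thm:LagBdDisc}. The paper is equally brief about the analytic foundations in the non-compact setting, deferring them (as you do) to the dissipativity estimates and to \cite{Groman2018}, so the extra detail you supply on compactness and on the degeneration of the FOOO spectral sequence is elaboration rather than a different route.
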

\begin{proof}
Under the assumption it is proven in \cite{FukayaOhOhtaOno2009} that after possibly deforming by a bounding co-chain $b$, there is a well defined and non-vanishing Lagrangian Floer homology $HF^*_b(L,L\! \colon \!  \omega_\sigma)$ which is again a unital $\Lambda_\sigma$-algebra. Moreover there is a $b$-deformed unital map $\mathcal{CO}_b:SH^*(T^*N \! \colon \! \omega_\sigma)\to HF^*_b(L,L\! \colon \! \omega_\sigma)$. The claim now follows as in the previous theorem.
\end{proof}

\subsection{Non-contractible periodic orbits}

We shall need to slightly relax the conditions in Definition \ref{dfDess} when dealing periodic orbits representing a non-trivial class $a \in [S^1,T^*N]$.
\begin{enumerate}
\item There is a constant $\delta>0$ and an exhaustion of $T^*N$ by compacts sets $K_i$ such that $d(\partial K_{i+1},K_i)>2\delta$ and $J_H$ is uniformly isoperimetric on the $\delta$ neighbourhood of $\bigcup_{i=1}^\infty\partial K_i$.
\item Condition \ref{dfDess2} of Definition \ref{dfDess} holds for loops $\gamma$ representing $a$.
\end{enumerate}
Note that with this weakened condition, the proof of Theorem \ref{tmDissDaimEst} goes through with only slight changes. Namely, it suffices to apply monotonicity near the $\partial K_i$.

In the following let $\mathcal{H}^*$ be  the family of dissipative Hamiltonians $H$ for which there are constants $c_1,c_2$ with $c_1|p|<H(p,q)<c_2|p|$. Note that $\mathcal{H}^*$ is independent of the choice of metric.
\begin{thm}
\label{thm:non-contract-proof}
Let $0 \ne a \in [S^1,N]$. Fix a magnetic form $\sigma$. Then for any function $H\in\mathcal{H}^*$ we have that there is a sequence $b_i\to\infty$ for which $H^{-1}(b_i)$ contains an $\omega_\sigma$-periodic orbit representing the class $a$.
\end{thm}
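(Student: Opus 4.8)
The plan is to deduce the statement from the nonvanishing of a version of the free-loop-class-$a$ symplectic cohomology that lives one step \emph{before} the specialisation of the sheaf of Theorem~\ref{tmMainIso} to the point $[\sigma]$. One cannot argue from $SH^*(T^*N;\omega_\sigma)$ directly: when $\sigma$ is not weakly exact this group, and its summand $SH^*_a$ over the class $a$, vanish by Theorem~\ref{thm:afo2}. So I would fix the segment $\mathcal{K}\subset H^2(N;\mathbb{R})$ from $0$ to $[\sigma]$ with a smooth section $k\mapsto\sigma_k$ ($\sigma_0=0$, $\sigma_1=\sigma$), for which, since $0\in\mathcal{K}$, the Novikov ring is the group ring $\Lambda_{\mathcal{K}}=\mathbb{K}[\Gamma_{\mathcal{K}}]=\mathbb{K}[\Gamma_\sigma]$ (Lemma~\ref{lem:nov2}). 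By Theorem~\ref{tmMainIso}(4)--(6) the stalk $CF(H,\mathcal{K},[\sigma])$ of the resulting sheaf is a well-defined complex over $\mathbb{K}[\Gamma_\sigma]$, freely generated by the $\omega_\sigma$-periodic orbits of $H$, chain homotopy equivalent to the non-magnetic stalk $CF(H,\mathcal{K},0)$ and compatibly with continuation. Restricting to the class $a$ and taking the direct limit over a cofinal sequence $H_k\in\mathcal{H}$ (perturbed into the admissible class $\mathcal{H}_{\mathcal{K}}$), the non-magnetic end computes, by Ritter's isomorphism (Theorem~\ref{tmRitter}) in its version with coefficients in the local system $\mathbb{K}[\Gamma_\sigma]$ (and the $w_2$-twist in characteristic $\neq2$),
\[
\varinjlim_k H_*\!\big(CF_a(H_k,\mathcal{K},[\sigma])\big)\;\cong\;H_{n-*}\!\big(\mathcal{L}^a N;\ \mathbb{K}[\Gamma_\sigma]\otimes\tau(w_2)\big),
\]
and the right-hand side is nonzero in degree $n$, where it is the module of coinvariants of $\mathbb{K}[\Gamma_\sigma]$ — a nonzero free $\mathbb{K}$-module, unaffected by the rank-one twist. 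This does not contradict Theorem~\ref{thm:afo2}: specialising $\mathbb{K}[\Gamma_\sigma]\to\Lambda_\sigma$ recovers $H_{n-*}(\mathcal{L}^a N)_{\tau(\sigma)}=0$, but we never perform that specialisation.

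Since $CF_a(H_k,\mathcal{K},[\sigma])$ is generated over $\mathbb{K}[\Gamma_\sigma]$ by the $\omega_\sigma$-periodic orbits of $H_k$ in the class $a$, nonvanishing of the limit forces $H_*(CF_a(H_k,\mathcal{K},[\sigma]))\neq0$, hence the existence of at least one such orbit, for all $k\ge k_0$. To pass from this to energy levels of the \emph{given} $H$, I would take the cofinal sequence of the form $H_k=f_k\circ H$, with $f_k\colon[0,\infty)\to[0,\infty)$ increasing, $f_k(0)=0$ and linear of slope $s_k\to\infty$ near infinity (so $H_k\in\mathcal{H}$). A $1$-periodic orbit of $X_{H_k,\sigma}=f_k'(H)\,X_{H,\sigma}$ in the class $a$ is non-constant, hence lies on a level $H^{-1}(b)$ and, after reparametrisation, is an $\omega_\sigma$-periodic orbit of $X_{H,\sigma}$ on that level of period $f_k'(b)$; in particular $H^{-1}(b)$ carries an $\omega_\sigma$-periodic orbit in the class $a$. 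Assume, for contradiction, that no $H^{-1}(b)$ with $b>B$ does; then every $a$-orbit of every $H_k$ lies in the fixed compact set $\{H\le B\}$. On that set $\|X_{H,\sigma}\|$ is bounded and any $\omega_\sigma$-periodic orbit in the class $a$ projects to a loop on $N$ of $g$-length at least the systolic length of the class $a$, so all such orbits have period at least some $\tau_0>0$. If one can choose the profiles so that $f_k'<\tau_0$ on $\{H\le B\}$ while keeping $\{H_k\}$ cofinal and the Floer data dissipative, then $H_k$ has no $a$-orbit at all, forcing $H_*(CF_a(H_k,\mathcal{K},[\sigma]))=0$ for all $k$ — a contradiction. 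Hence the levels carrying $a$-orbits are unbounded, and a sequence $b_i\to\infty$ exists.

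The hard part is precisely this last reconciliation: a profile $f_k$ that is sub-resonant ($f_k'<\tau_0$) on a large region cannot also be linear of large slope while dominating an arbitrary element of $\mathcal{H}$, so a single doctored cofinal sequence will not do. I expect the way around this to be a ``no-recurrence-in-a-window'' argument, in the spirit of the confinement estimate underlying Theorem~\ref{tmEneCompCont} and Remark~\ref{rmRobust}: when two admissible Hamiltonians agree on $\{H\le B\}$ and neither has a $1$-periodic $a$-orbit on a level $b>B$, the relevant Floer and continuation trajectories stay near $\{H\le B\}$, so the continuation map is an isomorphism on $H_*(CF_a(\cdot,\mathcal{K},[\sigma]))$; this lets one compute the cofinal limit from a Hamiltonian whose behaviour on $\{H\le B\}$ is fixed and sub-resonant, and then conclude as above. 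Making this confinement precise for the $s$-dependent, $\sigma$-varying Floer data, and verifying the needed genericity of the period spectrum of $X_{H,\sigma}$ on compact energy ranges, is where the genuine work lies; the remainder is bookkeeping around Theorem~\ref{tmMainIso} and the twisted Viterbo--Ritter isomorphism.
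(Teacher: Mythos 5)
Your strategy is the paper's: use Theorem~\ref{tmMainIso} over the segment $\K=[0,[\sigma]]$ to compute the (nonvanishing, Viterbo/Ritter) class-$a$ symplectic cohomology of $\omega_0$ by a complex generated by $\omega_\sigma$-periodic orbits over the uncompleted ring $\mathbb{K}[\Gamma_{\K}]$, and then run the contrapositive with a cofinal sequence of reparametrised Hamiltonians $h_i\circ H$ that are sub-resonant on the compact part and of large slope at infinity. However, the step you single out as ``where the genuine work lies'' is not a real difficulty, and the confinement/``no-recurrence-in-a-window'' machinery you propose is unnecessary. Sub-resonance is only needed on the \emph{fixed} compact set $\{H\le t_0\}$ supplied by the contradiction hypothesis; above level $t_0$ there are, by that very hypothesis, no closed $\omega_\sigma$-orbits in class $a$ of \emph{any} period, so the slope of $h_i$ there is completely unconstrained and may be taken equal to $i$ near infinity. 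The resulting sequence fails to dominate a given $G\in\mathcal{H}$ only by an additive constant on the compact region, and bounded differences induce continuation isomorphisms (Theorem~\ref{tmContinuatiation}), so cofinality for the purpose of the direct limit holds; there is no tension to reconcile, and the paper treats this point exactly so.

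The technical issue you should instead have addressed is dissipativity: for a general $H\in\mathcal{H}$ (only squeezed between linear functions), $iH$ and hence a naive $h_i\circ H$ with large slope need not be a dissipative Floer datum, so compactness of the relevant moduli spaces is not automatic. The paper fixes this with a zig-zag profile: take $h_i'=i$ on alternating radial intervals and $h_i'=0$ on the intervening ones (then smooth slightly), so that on the ``flat'' shells Floer's equation is the plain $J$-holomorphic equation and the weakened isoperimetric/dissipativity conditions hold; nondegeneracy in class $a$ is unaffected because $a\neq0$ excludes constant orbits on those shells. With that substitution in place of your final paragraph, your argument coincides with the paper's proof.
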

\begin{proof}
We start by observing that by Viterbo's theorem, for the ordinary geodesic flow we have\footnote{If the transgression class $\tau(w_2(N))$ does not vanish, we take our field $\mathbb{K}$ to have characteristic 2.} $SH^{*,a}(T^*N;\mathcal{H}^* \colon \! \omega_0)\neq 0$. By Theorem \ref{tmMainIso} we can compute $SH^{*,a}(T^*N;\mathcal{H}^* \colon \! \omega_0)$ using a Floer complex defined via the magnetic dynamics, albeit with certain twisted coefficients. We refer hence to the $\omega_\sigma$ flow for any Hamiltonian mentioned in this proof.
Suppose the conclusion of the Theorem does not hold. Then there is a $t_0>0$ for which $H(t)$ contains no $\omega_\sigma$-periodic orbit representing $a$ whenever $t>t_0$. We can then compute $SH^{*,a}(T^*N;\mathcal{H}^* \colon \! \omega_0)$ using a sequence of Hamiltonians of the form $h_i\circ H$ which are constructed as follows. Assume first that $iH$ is dissipative for all $i$. Then we take $h_i$ so that $h_i'|_{[0,t_0]}$ is so small that $h_i\circ H$ has no periodic orbits representing $a$, and such that $h'_i=i$ near infinity. This sequence is cofinal in $\mathcal{H}^*$. Moreover it is non-degenerate for $a$. Since it has no periodic orbits representing $a$, the twisted symplectic cohomology must vanish.

We now remove the assumption on dissipativity of $iH$. Instead we take $h'_i=i$ on every even interval contained in $\R_+\setminus [0,t_0]$ and $h'_i=0$ on every odd interval. We perturb slightly to smooth. The corresponding $H_i$ is automatically dissipative when paired with a uniformly isoperimetric $J$ since on the odd intervals we get the ordinary $J$-holomorphic equation. Moreover, since we are considering a non-trivial $a$, these Hamiltonians remain non-degenerate. Thus the statement holds for \textit{any} $H\in\mathcal{H}^*$.
\end{proof}
\begin{thm}
With the assumptions and notation of the previous theorem, let $A\subset\R$ be an open set such that for any $t\in A$ we have that $H^{-1}(t)$ does not contain a periodic orbit representing $a$. Then $\mu(A\cap[0,n])/n\to 0$ as $n\to\infty$. Here $\mu$ is the Lebesgue measure.
\end{thm}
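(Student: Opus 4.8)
The plan is to derive the density estimate from the non-vanishing $SH^{*,a}(T^*N;\mathcal H,\omega_0)\neq 0$ used in the proof of Theorem~\ref{thm:non-contract-proof} (a consequence of Viterbo's theorem), together with the fact, also exploited there, that this group is computed by the $\omega_\sigma$-twisted magnetic Floer complex along \emph{any} cofinal family of dissipative Hamiltonians linear at infinity (Theorems~\ref{tmMainIso} and~\ref{tmMain}). Arguing by contradiction, suppose $\limsup_{n\to\infty}\mu(A\cap[0,n])/n=\eta>0$ and fix $n_k\uparrow\infty$ with $\mu(A\cap[0,n_k])\geq(\eta/2)n_k$. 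The aim is to manufacture a cofinal family along which the class-$a$ symplectic cohomology vanishes.

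The first point is that, by hypothesis, no level $H^{-1}(t)$ with $t\in A$ carries a closed characteristic in class $a$, so for every reparametrisation $h\circ H$ the set of class-$a$ periodic orbits is disjoint from $H^{-1}(A)$. I would then choose a cofinal sequence $H_k=h_k\circ H$ with $h_k$ linear of slope $c_k\uparrow\infty$ off the geodesic period spectrum near infinity, so that dissipativity and non-degeneracy in class $a$ hold as in Theorems~\ref{tmLinDiss} and~\ref{thm:non-contract-proof}, but whose profile over $[0,n_k]$ is engineered relative to $A$: the derivative $h'_k$ is held constant over the components of $A\cap[0,n_k]$ and does all of its growth over $[0,n_k]\setminus A$, a set of measure at most $(1-\eta/2)n_k$. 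Using the action formula $\mathcal A\approx t\,h'_k(t)-h_k(t)$ for radial generators (Subsection~\ref{sec:radial}), and the fact that class-$a$ generators occur only over $[0,n_k]\setminus A$, the generators of $CF^{*,a}(H_k)$ ``turned on by radius $n_k$'' are squeezed into an action window whose size is governed by $\mu([0,n_k]\setminus A)$ rather than by $n_k$.

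I would finish as follows. A fixed nonzero class $\theta\in SH^{*,a}(T^*N;\mathcal H,\omega_0)$ is represented already at a finite stage, hence by a cycle of bounded action; combining the action-window estimate with the a-priori control of the twisted weights $e^{\int u^*\omega_\sigma}$ along continuation trajectories furnished by Theorems~\ref{lmMainEstimate} and~\ref{thmEstCont}, one shows that the continuation image of $\theta$ in $HF^{*,a}(H_k)$ is a boundary once $k$ is large. Hence every action-truncation of $SH^{*,a}(T^*N;\mathcal H,\omega_0)$ vanishes, so $SH^{*,a}(T^*N;\mathcal H,\omega_0)=0$, contradicting Viterbo's theorem; the density estimate follows.

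The step I expect to be the real obstacle is the last, quantitative one. Suppressing class-$a$ orbits over a single \emph{bounded} interval is routine and is exactly what powers Theorem~\ref{thm:non-contract-proof}; here $A$ need only have positive \emph{density}, so it may be a union of infinitely many short intervals with no long gap-free block, and one must arrange the profile of $h'_k$ so that the \emph{cumulative} effect of all the $A$-components inside $[0,n_k]$, rather than any single one, drives the onset of class-$a$ generators past the action level of the surviving class. Doing this while keeping $H_k$ cofinal in $\mathcal H$, keeping the interpolating Floer data dissipative, and keeping the Novikov multi-finiteness bookkeeping of Subsection~\ref{subsection:novikov} intact is where the robustness of the estimates of Section~\ref{sec:estimate_proofs} has to be used, and it is also where the linear-in-$n$ rate, as opposed to a weaker $o(n)$ conclusion, ultimately enters.
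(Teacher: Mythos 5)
There is a genuine gap, and it lies in the basic design of your cofinal family, not merely in the quantitative step you flag at the end. You arrange for $h_k'$ to be (nonzero) constant over $A\cap[0,n_k]$ and to do all of its growth over $[0,n_k]\setminus A$. But $[0,n_k]\setminus A$ is exactly where class-$a$ closed characteristics are \emph{allowed} to live, so your Hamiltonians still acquire class-$a$ generators whose actions grow without bound as the slope increases; confining them to an action window of size $(1-\eta/2)n_k$ rather than $n_k$ is still a linearly growing window and yields no vanishing. The concluding step --- that the continuation image of a fixed nonzero class $\theta$ becomes a boundary for large $k$ --- has no identified mechanism: neither Theorem \ref{lmMainEstimate} nor Theorem \ref{thmEstCont} produces cancellation of a class from an action-window bound, and nonvanishing of $SH^{*,a}$ is perfectly compatible with all generators lying in a window of linear (even sublinear) size. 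So the contradiction is not reached.

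The paper's argument is the mirror image of yours and avoids all estimates. One takes $h(t)=\int_0^t 1_A$, i.e.\ concentrates \emph{all} the growth of $h$ on $A$ itself, and sets $h_i=ih$ (suitably smoothed). Over levels in $A$ there are no class-$a$ closed characteristics by hypothesis, and over levels in $\R\setminus A$ one has $h_i'=0$, so $X_{h_i\circ H}$ vanishes and only constant (hence contractible) orbits appear: the class-$a$ Floer complex of $h_i\circ H$ is identically zero. The positive density of $A$ is used precisely to guarantee $h(t)\geq\epsilon t$, so that $\{ih\circ H\}$ is still cofinal in $\mathcal H$; and after a harmless trimming of $A$ (so that each $[n,n+1]\setminus A$ contains an interval of length $1/2$, at the cost of halving $\epsilon$) the plateaus of $h_i$ make $h_i\circ H$ dissipative exactly as in the proof of Theorem \ref{thm:non-contract-proof}. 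Vanishing of $SH^{*,a}$ then contradicts Viterbo's theorem via Theorem \ref{tmMainIso}. If you want to salvage your write-up, replace ``$h_k'$ constant on $A$, growing off $A$'' by ``$h_k'$ supported on $A$, zero off $A$''; the rest of your framing (contradiction via non-vanishing of $SH^{*,a}$, cofinality, dissipativity) is then correct and the quantitative step you were worried about disappears entirely.
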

\begin{proof}
Suppose by contradiction that $\mu(A)\cap[0,n]\geq \epsilon n$ for every $n$. Let $h(t):=\int_0^t1_A$. Then $\epsilon t\leq h(t)\leq t$ for all $t\in\R_+$. Thus, the sequence of functions $h_i=ih$ is $\preceq$-cofinal in the set of all linear functions. On the other hand we may assume without loss of generality that for any $n$ the set $[n,n+1]\setminus A$ contains an interval of length $1/2$. Indeed, by erasing from $A$ the half of lesser measure from each interval $[n,n+1]$, we maintain the inequality $\mu(A)\cap[0,n]\geq \frac{\epsilon}{2} n$. Thus the function $h_i$ is constant an infinite sequence of intervals of length $1/2.$ The functions $H_i:=h_i\circ H$ are thus dissipative as in proof of the previous theorem. The sequence $H_i$ is ${\preceq}$-cofinal in $\mathcal{H}^*$ and none of the Hamiltonians in the sequence have a periodic orbit representing $a$, even after a slight perturbation to make them smooth. In particular, the twisted symplectic cohomology must vanish. This contradiction implies the claim.
\end{proof}

\bibliographystyle{plain}
\bibliography{willmacbibtex}

\end{document}